\documentclass[pdflatex,sn-mathphys-num]{sn-jnl}

\usepackage{amsmath,amssymb,amsthm,mathtools,amsfonts}
\usepackage{graphicx}
\usepackage{multirow}
\usepackage{mathrsfs}
\usepackage[title]{appendix}
\usepackage{xcolor}
\usepackage{textcomp}
\usepackage{manyfoot}
\usepackage{booktabs}
\usepackage{algorithm}
\usepackage{lineno}
\usepackage{algorithmicx}
\usepackage{algpseudocode}
\usepackage{listings}
\usepackage{hyperref}
\usepackage{enumitem}
\usepackage[mathscr]{eucal}
\usepackage{tikz}
\usetikzlibrary{arrows.meta, positioning, calc, decorations.pathmorphing}

\hypersetup{
  colorlinks=true,
  linkcolor=blue!55!black,
  citecolor=blue!55!black,
  urlcolor=blue!55!black
}

\DeclareMathOperator*{\argmax}{arg\,max}
\DeclareMathOperator*{\argmin}{arg\,min}
\DeclareMathOperator{\dom}{dom}
\DeclareMathOperator{\intr}{int}

\newcommand{\R}{\mathbb{R}}
\newcommand{\Rn}{\mathbb{R}^n}
\newcommand{\Rm}{\mathbb{R}^m}
\newcommand{\U}{\mathcal{U}}
\newcommand{\Cc}{\mathcal{C}}
\newcommand{\Ht}{\widetilde H}
\newcommand{\tp}{^\top}
\newcommand{\ip}[2]{\langle #1,#2\rangle}

\newtheorem{theorem}{Theorem}[section]
\newtheorem{proposition}[theorem]{Proposition}
\newtheorem{lemma}[theorem]{Lemma}
\newtheorem{corollary}[theorem]{Corollary}
\theoremstyle{definition}
\newtheorem{definition}[theorem]{Definition}
\newtheorem{assumption}[theorem]{Assumption}
\newtheorem{remark}[theorem]{Remark}

\numberwithin{equation}{section}

\usepackage{tabularx}

\begin{document}

\title{\bf Multiplier Sensitivity in Isoperimetric Optimal Control}

\author*[1]{\fnm{Ye} \sur{Liang}}
\email{ye-liang@uiowa.edu}

\affil[1]{\orgdiv{College of Engineering},
\orgname{The University of Iowa},
\orgaddress{
\city{Iowa City}, 
\state{IA},
\postcode{52242},      
\country{United States}
}}

\abstract{
We study finite-horizon optimal control problems with scalar isoperimetric constraints from a function-space duality perspective. Controls are treated as elements of weakly compact subsets of \(L^\infty(0,T;\mathbb R^m)\), while the state equation induces a control-to-state map into \(W^{1,\infty}(0,T;\mathbb R^n)\). For linear dynamics, concave payoff, and an affine isoperimetric functional, we prove that the constrained value function has an interval domain, is concave, and admits a Fenchel--Moreau dual
representation. This yields a superdifferential formula identifying the
negative of the dual multiplier with the sensitivity of the value function with
respect to the constraint level. A constraint qualification is then used to
identify the dual multiplier with the normal Pontryagin multiplier of the
augmented isoperimetric system. We also treat linear-quadratic problems with a
single quadratic equality constraint by reducing them to quadratic forms on a
Hilbert space. The resulting analysis separates the validity of the envelope
formula from the regularity needed for Riccati synthesis, showing that
sensitivity may persist even when the modified control-weight operator becomes
singular.}

\keywords{
function-space duality;
weak-star compactness;
Fenchel--Moreau duality;
Pontryagin multipliers;
value-function sensitivity;
Hilbert-space quadratic forms;
linear-quadratic control.}

\pacs[MSC Classification]{46N10, 49K15, 49J15, 49N10, 90C25, 90C46.}

\maketitle 

\section{Introduction and Main Contributions}\label{sec:intro}

Optimal control problems with integral constraints, often called
\emph{isoperimetric constraints}, arise in resource-limited control,
energy-budgeted optimization, economic growth, and constrained engineering
design. In such problems the admissible controls must satisfy both a state
equation and a prescribed accumulated quantity,
\[
    \int_0^T h(t,x(t),u(t))\,dt = B .
\]
The classical Pontryagin maximum principle treats this constraint by
augmenting the state. Introducing
\[
    z(t)=\int_0^t h(s,x(s),u(s))\,ds,
\]
one obtains
    $\dot z(t)=h(t,x(t),u(t))$, $z(0)=0$, $z(T)=B$,
and the endpoint condition \(z(T)=B\) gives rise to a scalar adjoint multiplier
\(\mu\) associated with the isoperimetric constraint.

The present paper studies the function-space and duality mechanisms that determine
when this scalar multiplier has a genuine sensitivity interpretation. The
admissible controls are viewed as elements of weakly compact subsets of
\(L^\infty(0,T;\mathbb R^m)\), while the state equation defines a
control-to-state map into \(W^{1,\infty}(0,T;\mathbb R^n)\). In the structured
linear case this map is affine and weak-\(*\) continuous, and therefore the
isoperimetric problem becomes an optimization problem over a Banach function
space with a scalar constraint functional.

Let
\[
    \Cc(u):=\int_0^T h(t,x_u(t),u(t))\,dt
\]
denote the accumulated constraint functional, where \(x_u\) is the trajectory
generated by the control \(u\), and define the constrained value function
    $V(B):=\sup\bigl\{J(u):u\in\U,\ \Cc(u)=B\bigr\}$.
The central question is whether the Pontryagin multiplier \(\mu\) can be
identified with the marginal value of changing \(B\). Schematically, one seeks
\[
    \text{Pontryagin multiplier}
    \quad\Longrightarrow\quad
    \text{shadow value of the isoperimetric level}.
\]
This implication is not automatic. For a nonlinear equality-constrained
control problem, the maximum principle alone does not imply that \(V\) is
concave, locally Lipschitz, or differentiable. Equality-constrained attainable
sets may be nonconvex, and different constraint levels may correspond to
disconnected families of feasible or optimal controls. Thus the existence of
an isoperimetric multiplier is a first-order necessary condition, not by itself
an envelope theorem.

The main theme of the paper is that the shadow-price interpretation is a
duality statement in function spaces. In the maximization setting, the relevant
dual representation has the form
    $V(B)=\inf_{\mu\in\mathbb R}\{\ell(\mu)-\mu B\}$,
    $\ell(\mu):=\sup_{u\in\U}\{J(u)+\mu\Cc(u)\}$.
When this representation is valid, Fenchel--Moreau duality identifies the
superdifferential of \(V\) with the negative of the set of dual-optimal
multipliers. A further constraint qualification is then needed to identify
these dual multipliers with the normal Pontryagin multipliers of the augmented
isoperimetric system. This separation is important: the convex-analytic
superdifferential identity is a property of the value function, whereas the
identification with Pontryagin multipliers is a normality and multiplier
representation statement.

\paragraph{Main contributions.}
The paper develops this program in two function-space regimes.

\begin{enumerate}[label=\textup{(C\arabic*)}]
  \item \textbf{Weak-\(*\) compactness and affine isoperimetric constraints.}
  For linear dynamics, concave payoff, compact convex pointwise control set,
  and affine isoperimetric integrand, the control-to-state map is affine and
  weak-\(*\) continuous from \(L^\infty(0,T;\mathbb R^m)\) into
  \(C([0,T];\mathbb R^n)\). We prove that the attainable set
  \(\dom V=\Cc(\U)\) is an interval and that the constrained value function is
  concave. Under a two-sided attainability condition, Fenchel--Moreau duality
  yields the strong dual representation of \(V\). Thus the dual representation
  is derived from primitive function-space hypotheses rather than assumed.

  \item \textbf{Superdifferential sensitivity by convex duality.}
  Once \(V\) is finite and concave on the interior of its domain, the identity
      $\partial^+V(B)=-\mathcal D(B)$
  follows from conjugate duality, where \(\mathcal D(B)\) denotes the set of
  dual-optimal multipliers at level \(B\). Hence, at every differentiability
  point,
      $V'(B)=-\mu$.
  The additional step is to identify \(\mathcal D(B)\) with the set
  \(\mathcal M(B)\) of normal Pontryagin isoperimetric multipliers. We prove
  this under an explicit constraint qualification based on two-sided
  attainability of the constraint level.

  \item \textbf{Hilbert-space quadratic forms and LQ constraints.}
  For linear-quadratic problems with a single quadratic equality constraint,
  eliminating the state reduces the problem to quadratic forms on the Hilbert
  space \(L^2(0,T;\mathbb R^m)\):
  \[
      J_{\rm LQ}(u)
      =
      \frac12\langle u,\mathcal R u\rangle
      +
      \langle \rho_0,u\rangle+\gamma_0,
      \qquad
      \Cc(u)=\langle u,\mathcal D u\rangle .
  \]
  This exposes a second hidden-convexity mechanism, related to the joint range
  of two quadratic functionals. The envelope formula remains a duality
  statement and can persist even when the modified control-weight operator is
  singular. By contrast, Riccati synthesis requires the stronger regularity
  condition
      $R(t)+2\mu^*D(t)\succ0$ a.e.
  at the optimal multiplier. The degenerate scalar example shows that
  sensitivity may survive precisely at the boundary where Riccati feedback
  fails.
\end{enumerate}

The main conclusion is therefore
    $-\mu\in\partial^+V(B)$,
and, whenever \(V\) is differentiable at \(B\),
    $V'(B)=-\mu$.
Thus the negative of the isoperimetric multiplier
is the marginal value of relaxing the constraint level under the Lagrangian
convention \(J(u)+\mu(\Cc(u)-B)\).

\paragraph{Organization.}
Section~\ref{sec:preliminaries} formulates the control problem on weak
function spaces, proves well-posedness and existence under compactness and
closedness hypotheses, and recalls the normal-form maximum principle.
Section~\ref{sec:sensitivity} develops the isoperimetric augmentation,
establishes weak-\(*\) compactness and strong duality for the structured
concave-affine class, and proves the superdifferential representation of the
value function. Section~\ref{sec:lq} treats linear-quadratic isoperimetric
problems as Hilbert-space quadratic programs, separating the dual sensitivity
formula from the additional regularity required for Riccati synthesis.
Section~\ref{sec:numerics} gives a shooting implementation and numerical
checks of the envelope identity.

The analysis is based on weak compactness, convex duality, and Hilbert-space
quadratic form theory rather than on dynamic programming. This functional-
analytic viewpoint is the reason the isoperimetric multiplier can be treated
as a shadow price only after the appropriate duality and normality mechanisms
have been established.

\paragraph{Related work.}
The maximum principle and its isoperimetric augmentation are classical
\cite{pontryagin2018mathematical,hestenes1966calculus,gamkrelidze2013principles,berkovitz2013optimal,lee1967foundations,bryson2018applied,athans2013optimal,wang2025analysis,
fleming2012deterministic,vinter2010optimal,clarke2013functional,liberzon2011calculus,ioe1979theory}, as are the underlying
calculus-of-variations problems with integral constraints
\cite{bliss1946lectures,gelfand1963calculus,wang2025analysis1,giaquinta2013calculus}. Existence in the constrained
setting rests on convexity/relaxation and lower-closure theory
\cite{cesari2012optimization,warga2014optimal,liu2025bidirectional,young2024lectures,balder1984general,ekeland1999convex}. Our sensitivity analysis is in
the tradition of perturbation and value-function theory in optimization
\cite{rockafellar1997convex,rockafellar1974conjugate,liang2025global,rockafellar1998variational,bonnans2013perturbation,fiacco1983introduction,gauvin1977necessary,
mordukhovich2006variational1,mordukhovich2006variational2,aubin1990set,wang2026algebraic,cannarsa2004semiconcave,luenberger1997optimization,wang2025multi,gao2022rolling,
borwein2006convex}, and the identification of Pontryagin with dual multipliers uses
constraint-qualification and multiplier theory in Banach spaces
\cite{robinson1976stability,zowe1979regularity,wang2026damage,kurcyusz1976existence,maurer1979first,jahn2007introduction}. The linear-quadratic
results draw on the convexity of the joint range of quadratic forms
\cite{toeplitz1918algebraische,hausdorff1919wertvorrat,dines1941mapping,yu2026pattern,brickman1961field,polyak1998convexity,hiriart2002permanently}, on the
S-procedure and its duality theory
\cite{yakubovich1971s,fradkov1979thes,polik2007survey,ben2001lectures,beck2006strong}, on
the trust-region subproblem and its hard case
\cite{gay1981computing,sorensen1982newton,more1983computing,wang2026breakdown,conn2000trust,stern1995indefinite,martinez1994local}, and on
Riccati theory and integral quadratic constraints
\cite{anderson2007optimal,wonham1968matrix,bittanti1991riccati,liu2026computational,lancaster1995algebraic,abou2012matrix,megretski1997system,yu2026rigorous,
willems1972dissipative}.

The paper is organized as follows.
Section~\ref{sec:preliminaries} states the standing assumptions and the
normal-form maximum principle, and proves existence for both the unconstrained
and the structured constrained problem. Section~\ref{sec:sensitivity} develops
the augmented maximum principle, proves concavity and strong duality for the
structured class, establishes the conjugate-duality characterization of the
superdifferential, and adds the constraint-qualification step identifying the
multiplier sets. Section~\ref{sec:lq} treats linear-quadratic isoperimetric
problems, proving hidden convexity via Dines--Polyak and separating sensitivity
from Riccati synthesis, with the degenerate case worked out explicitly.

\section{Problem Formulation, Existence, and Normal PMP}
\label{sec:preliminaries}

Let $T>0$ be fixed. We consider
\begin{equation}\label{eq:unconstrained-problem}
\begin{aligned}
    &\text{maximize} &&
    J(u):=\int_0^T f(t,x(t),u(t))\,dt+\phi(x(T))\\
    &\text{subject to} &&
    \dot x(t)=g(t,x(t),u(t)),\qquad x(0)=x_0,\qquad u\in\U .
\end{aligned}
\end{equation}
Here $x(t)\in\Rn$, $u(t)\in\Rm$, and $x_0\in\Rn$ is fixed. The admissible
controls are
\[
\U:=\bigl\{u\in L^\infty(0,T;\Rm): u(t)\in U \text{ for a.e. }t\in[0,T]\bigr\},
\]
with $U\subset\Rm$ a prescribed control set.

\subsection{Standing assumptions}

\begin{assumption}[State equation and payoff]\label{ass:data}
The maps $g:[0,T]\times\Rn\times\Rm\to\Rn$, $f:[0,T]\times\Rn\times\Rm\to\R$,
and $\phi:\Rn\to\R$ satisfy:
\begin{enumerate}[label=\textup{(A\arabic*)}]
  \item $t\mapsto g(t,x,u)$ and $t\mapsto f(t,x,u)$ are measurable for each
  $(x,u)$.
  \item $(x,u)\mapsto g(t,x,u)$ and $(x,u)\mapsto f(t,x,u)$ are $C^1$ for a.e.
  $t$.
  \item There is $L>0$ with
  $\|g(t,x_1,u)-g(t,x_2,u)\|\le L\|x_1-x_2\|$ for all $x_1,x_2$, all $u\in U$,
  a.e.\ $t$.
  \item There is $c>0$ with $\|g(t,x,u)\|\le c(1+\|x\|+\|u\|)$ and
  $|f(t,x,u)|\le c(1+\|x\|^2+\|u\|^2)$ on $[0,T]\times\Rn\times U$.
  \item $\phi\in C^1(\Rn)$ and $|\phi(x)|\le c(1+\|x\|^2)$.
\end{enumerate}
\end{assumption}

\begin{assumption}[Compactness and closedness]\label{ass:compact-closed}
The set $U\subset\Rm$ is nonempty, compact, and convex. Moreover the feasible
trajectory set is sequentially closed: if $u_j\rightharpoonup^* u$ in
$L^\infty(0,T;\Rm)$ with $u_j(t)\in U$ a.e., and $x_j\to x$ uniformly on
$[0,T]$, then $x$ is the trajectory of $u$; and along every such maximizing
sequence $\limsup_j J(u_j)\le J(u)$.
\end{assumption}

\begin{remark}[Role of closedness, and the structured class]\label{rem:closedness}
Assumption~\ref{ass:compact-closed} is stated explicitly to avoid hiding a
compactness argument. Its substantive content is the nonlinear limit passage
$\dot x_j=g(t,x_j,u_j)\to\dot x=g(t,x,u)$, which for $g$ nonlinear in $u$
requires a convexity/relaxation structure. In the \emph{structured class} of
Section~\ref{sec:sensitivity} (linear dynamics, concave payoff, convex $U$),
this closedness is automatic: the control-to-state map is affine and weak-$*$
continuous, and $J$ is weak-$*$ upper semicontinuous. The same structural
hypotheses drive the duality results below, so the paper's standing assumptions
are unified.
\end{remark}

\subsection{Well-posedness and existence}

\begin{proposition}[Existence and uniqueness of trajectories]\label{prop:wellposed}
Under Assumption~\ref{ass:data}, for every $u\in\U$ the initial value problem
$\dot x(t)=g(t,x(t),u(t))$, $x(0)=x_0$,
has a unique solution $x_u\in W^{1,\infty}(0,T;\Rn)$.
\end{proposition}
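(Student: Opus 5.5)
The plan is to run the standard Carathéodory existence-and-uniqueness argument, using the global Lipschitz bound (A3) and the linear growth bound (A4). Because the dynamics are globally Lipschitz, no continuation or blow-up argument is needed.

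Fix $u\in\U$ and set $G(t,x):=g(t,x,u(t))$.

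\textbf{Carathéodory conditions.} First check that $G$ satisfies them:
\begin{itemize}
\item For each $x$, the map $t\mapsto G(t,x)$ is measurable. This follows from (A1), (A2) and the measurability of $u$. Joint continuity of $g$ in $(x,u)$ together with measurability in $t$ makes $g$ Carathéodory, hence superpositionally measurable.
\item For a.e.\ $t$, $x\mapsto G(t,x)$ is Lipschitz with constant $L$, since $u(t)\in U$ a.e.\ and (A3) applies.
\item $\|G(t,x)\|\le c(1+\|x\|+\|u\|_{L^\infty})$ by (A4). Here $\|u\|_{L^\infty}\le \max_{v\in U}\|v\|<\infty$ because $U$ is compact, or simply because $u\in L^\infty$.
\end{itemize}

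\textbf{Contraction.} Rewrite the problem as the integral equation $x(t)=x_0+\int_0^t G(s,x(s))\,ds$ on $C([0,T];\Rn)$. Define $\Phi(x)(t):=x_0+\int_0^t G(s,x(s))\,ds$. The growth bound shows that $s\mapsto G(s,x(s))$ is integrable, indeed essentially bounded, for continuous $x$, so $\Phi$ maps $C([0,T];\Rn)$ into itself. Use the Bielecki norm $\|x\|_\lambda:=\sup_t e^{-\lambda t}\|x(t)\|$ with $\lambda>L$. Then (A3) gives
\[
\|\Phi(x)-\Phi(y)\|_\lambda\le \frac{L}{\lambda}\|x-y\|_\lambda ,
\]
so $\Phi$ is a contraction on the whole interval $[0,T]$. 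Banach's fixed point theorem yields a unique continuous solution $x_u$.

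\textbf{Regularity.} Gronwall's inequality applied to the growth bound gives
\[
\|x_u(t)\|\le (\|x_0\|+cT(1+\|u\|_\infty))e^{cT}.
\]
Hence $\dot x_u=G(\cdot,x_u)$ is essentially bounded, and $x_u\in W^{1,\infty}(0,T;\Rn)$. The a.e.\ derivative identity is exactly the ODE, since $x_u$ is absolutely continuous as an integral of an $L^1$ function.

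\textbf{Uniqueness.} Any $W^{1,\infty}$ solution is absolutely continuous and therefore satisfies the integral equation, so it is a fixed point of $\Phi$ and coincides with $x_u$. Alternatively, Gronwall applied to the difference of two solutions gives the same conclusion.

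\textbf{Main obstacle.} The only delicate point is the measurability of $t\mapsto g(t,x(t),u(t))$ for measurable $u$ and continuous $x$. This follows because $g$ is Carathéodory: measurable in $t$ and continuous in $(x,u)$ by (A2). The standard route is to approximate $(x,u)$ by simple functions, for which measurability is immediate, and pass to the limit using continuity in $(x,u)$. The a.e.-$t$ qualifier in (A2) and (A3) is harmless, because it only affects a null set of times.
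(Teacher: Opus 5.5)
Your proof is correct, but it is organized differently from the paper's. The paper proceeds in three separate stages. Carath\'eodory's theorem gives a local absolutely continuous solution. The linear growth bound (A4), combined with Gronwall, rules out blow-up and continues that solution to $[0,T]$. A second Gronwall estimate on the difference of two solutions then gives uniqueness, and $W^{1,\infty}$ regularity follows from boundedness of $x_u$ and (A4).

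You instead use the global Lipschitz bound (A3) from the start. With the Bielecki norm, the Picard map is a contraction on all of $C([0,T];\Rn)$, so existence and uniqueness come out together and globally, with no local-to-global continuation. Gronwall then serves only to give an explicit a priori bound; boundedness itself is automatic from continuity on a compact interval.

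Your route is shorter. It also exhibits $x_u$ as the fixed point of a contraction, which is convenient for later estimates such as Lipschitz dependence of $x_u$ on $u$. The paper's route, by contrast, uses (A3) only in the uniqueness step: local existence needs just continuity in $x$ and the growth bound. It therefore adapts verbatim to $g$ that is merely locally Lipschitz in $x$, with (A4) still preventing blow-up.

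You also supply the superposition-measurability argument for $t\mapsto g(t,x(t),u(t))$, which the paper leaves implicit.
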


\begin{proof}
Fix $u\in\U$. The map $(t,x)\mapsto g(t,x,u(t))$ is measurable in $t$, $C^1$
in $x$ a.e., and uniformly Lipschitz in $x$; Carath\'eodory's theorem gives a
local absolutely continuous solution. The growth bound gives
$\|\dot x(t)\|\le c_u(1+\|x(t)\|)$ with $c_u=c(1+\|u\|_{L^\infty})<\infty$, so
Gronwall's inequality precludes blow-up and extends the solution to $[0,T]$.
Two solutions $x_1,x_2$ satisfy
\[
\|x_1(t)-x_2(t)\|\le L\int_0^t\|x_1-x_2\|\,ds,
\]
whence uniqueness by Gronwall.
Boundedness of $x_u$ and the growth bound give $\dot x_u\in L^\infty$, so
$x_u\in W^{1,\infty}$.
\end{proof}

\begin{theorem}[Existence for the unconstrained problem]\label{thm:existence}
Under Assumptions~\ref{ass:data} and~\ref{ass:compact-closed}, problem
\eqref{eq:unconstrained-problem} admits an optimal control.
\end{theorem}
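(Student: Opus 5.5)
The plan is the direct method of the calculus of variations in the weak-$*$ topology of $L^\infty(0,T;\Rm)$. First I will show that the supremum $J^*:=\sup_{u\in\U}J(u)$ is finite. Since $U$ is compact, there is $M$ with $\|u\|_{L^\infty}\le M$ for all $u\in\U$. The growth bound (A4) on $g$ and Gronwall's inequality, as in the proof of Proposition~\ref{prop:wellposed}, give a uniform estimate
\[
\|x_u\|_{C([0,T])}\le K:=\bigl(\|x_0\|+cT(1+M)\bigr)e^{cT}.
\]
Combined with the quadratic growth bounds on $f$ and $\phi$ in (A4)--(A5), this yields $|J(u)|\le cT(1+K^2+M^2)+c(1+K^2)$. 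Hence $J^*\in\R$, and a maximizing sequence $(u_j)\subset\U$ with $J(u_j)\to J^*$ exists.

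Next comes compactness. The set $\U$ is bounded in $L^\infty=(L^1)^*$, and $L^1(0,T;\Rm)$ is separable. By sequential Banach--Alaoglu there is a subsequence, not relabeled, with $u_j\rightharpoonup^* u$ for some $u\in L^\infty$. The limit stays in $\U$ because $U$ is closed and convex. To see this, I will show $\U$ is weak-$*$ closed. It is convex. If $u(t)\notin U$ on a set of positive measure, a measurable-selection/separation argument produces $\xi\in L^1$, supported on a subset of that set, with
\[
\int\ip{\xi}{u}\,dt>\sup_{v\in\U}\int\ip{\xi}{v}\,dt,
\]
which contradicts weak-$*$ convergence. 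Alternatively, Mazur's lemma applied in $L^2$ gives convex combinations converging strongly, hence a.e. along a further subsequence, into the closed convex $U$.

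For the trajectories, write $x_j:=x_{u_j}$. These satisfy $\|x_j\|_\infty\le K$ and $\|\dot x_j\|_\infty\le c(1+K+M)$, so they are equibounded and equi-Lipschitz. By Arzel\`a--Ascoli a further subsequence converges uniformly to some $x\in C([0,T];\Rn)$, and $x$ is Lipschitz. We now have a sequence of exactly the kind described in Assumption~\ref{ass:compact-closed}. That assumption gives $x=x_u$ and $\limsup_j J(u_j)\le J(u)$, so
\[
J^*=\lim_j J(u_j)\le J(u)\le J^*,
\]
and $u$ is optimal.

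The main obstacle is the limit passage in the dynamics and in the payoff, and that is exactly what Assumption~\ref{ass:compact-closed} is designed to supply, so here it reduces to invoking the hypothesis. The only genuinely delicate step left is the weak-$*$ closedness of $\U$, which requires convexity of $U$. I would handle it by the Mazur-lemma route, since it avoids measurable-selection technicalities. One also has to make sure the subsequences are extracted so that both $u_j\rightharpoonup^* u$ and $x_j\to x$ hold along the same sequence, which remains a maximizing sequence as the hypothesis requires.
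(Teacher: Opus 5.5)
Your proposal is correct and follows the paper's proof step for step: a maximizing sequence, sequential Banach--Alaoglu in $L^\infty$, weak-$*$ closedness of $\U$ from closedness and convexity of $U$, Arzel\`a--Ascoli for the trajectories, and then Assumption~\ref{ass:compact-closed} for the limit passage. You also make explicit several details the paper leaves implicit, without changing the route: finiteness of $\sup_\U J$, the uniform (not merely per-control) $W^{1,\infty}$ bound on the states from Gronwall with a constant independent of $u$, and the Mazur-lemma justification of weak-$*$ closedness.
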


\begin{proof}
Let $\{u_j\}\subset\U$ be maximizing. By compactness of $U$ the sequence is
bounded in $L^\infty$; by Banach--Alaoglu, along a subsequence
$u_j\rightharpoonup^* u^*$. Since $U$ is closed and convex the pointwise
constraint is weak-$*$ closed, so $u^*\in\U$. By
Proposition~\ref{prop:wellposed} the states $x_j$ are bounded in $W^{1,\infty}$,
so by Arzel\`a--Ascoli $x_j\to x^*$ uniformly along a further subsequence. By
Assumption~\ref{ass:compact-closed}, $x^*$ is the trajectory of $u^*$ and
$J(u^*)\ge\limsup_j J(u_j)=\sup_\U J$. Hence $u^*$ is optimal.
\end{proof}

\subsection{Normal-form Pontryagin maximum principle}

Define the Hamiltonian $H(t,x,u,\lambda):=f(t,x,u)+\lambda\tp g(t,x,u)$.

\begin{theorem}[Normal-form necessary conditions]\label{thm:pmp-normal}
Let Assumption~\ref{ass:data} hold and let $u^*\in\U$ be a normal local
maximizer of \eqref{eq:unconstrained-problem}, with trajectory $x^*=x_{u^*}$.
Then there exists $\lambda\in W^{1,1}(0,T;\Rn)$ such that, a.e.\ on $[0,T]$,
\begin{align}
  \dot x^*(t)&=g(t,x^*(t),u^*(t)),\label{eq:pmp-state}\\
  \dot\lambda(t)&=-H_x(t,x^*(t),u^*(t),\lambda(t)),\label{eq:pmp-adjoint}\\
  H(t,x^*(t),u^*(t),\lambda(t))&=\max_{u\in U}H(t,x^*(t),u,\lambda(t)),
  \label{eq:pmp-max}
\end{align}
with the transversality condition
\begin{equation}\label{eq:pmp-terminal}
  \lambda(T)=\nabla\phi(x^*(T)).
\end{equation}
\end{theorem}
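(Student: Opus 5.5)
The plan is to derive the normal PMP from needle (spike) variations of $u^*$. The terminal-free structure of \eqref{eq:unconstrained-problem} makes normality automatic: with no endpoint constraints there is no abnormal case, and the cost multiplier can be set to $1$. Let $\lambda$ be the solution of the linear adjoint equation \eqref{eq:pmp-adjoint} with terminal data \eqref{eq:pmp-terminal}. This is a linear ODE with coefficients $g_x(t,x^*(t),u^*(t))$ and $f_x(t,x^*(t),u^*(t))$. These coefficients are measurable by (A1)--(A2) and essentially bounded, since $x^*$ is bounded by Proposition~\ref{prop:wellposed}, $U$ is compact, and the derivatives are continuous. So a unique $\lambda\in W^{1,\infty}\subset W^{1,1}$ exists, and \eqref{eq:pmp-state} holds by definition of $x^*=x_{u^*}$.

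For the maximum condition \eqref{eq:pmp-max}, fix $v\in U$ and a Lebesgue point $\tau\in(0,T)$ of the map $t\mapsto\bigl(f(t,x^*,u^*),g(t,x^*,u^*),f(t,x^*,v),g(t,x^*,v)\bigr)$. Define the needle control $u_\varepsilon=v$ on $(\tau-\varepsilon,\tau]$ and $u_\varepsilon=u^*$ elsewhere, so $u_\varepsilon\in\U$. The first step is to show, by Gronwall using (A3), that:
\begin{itemize}
\item $\|x_\varepsilon-x^*\|_\infty=O(\varepsilon)$;
\item $x_\varepsilon(\tau)-x^*(\tau)=\varepsilon\,[g(\tau,x^*(\tau),v)-g(\tau,x^*(\tau),u^*(\tau))]+o(\varepsilon)$;
\item on $[\tau,T]$ the rescaled difference $(x_\varepsilon-x^*)/\varepsilon$ converges uniformly to the solution $y$ of the variational equation $\dot y=g_x y$ with that initial jump.
\end{itemize}
The next step is to expand
\[
J(u_\varepsilon)-J(u^*)=\varepsilon\bigl[f(\tau,x^*,v)-f(\tau,x^*,u^*)\bigr]+\varepsilon\int_\tau^T f_x\,y\,dt+\varepsilon\nabla\phi(x^*(T))\tp y(T)+o(\varepsilon).
\]
Differentiating $\lambda\tp y$ with \eqref{eq:pmp-adjoint} and using \eqref{eq:pmp-terminal} turns the last two terms into $\lambda(\tau)\tp y(\tau)$. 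This gives
\[
J(u_\varepsilon)-J(u^*)=\varepsilon\bigl[H(\tau,x^*,v,\lambda)-H(\tau,x^*,u^*,\lambda)\bigr]+o(\varepsilon).
\]
Local maximality of $u^*$ then forces the bracket to be $\le0$. The notion of local maximality used is one in which needle perturbations are admissible for small $\varepsilon$, e.g.\ in the $L^1$ or strong-trajectory sense. I would state explicitly that "normal local maximizer" is interpreted this way.

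The main obstacle is making the bracket inequality hold simultaneously for all $v\in U$ at a.e.\ $\tau$, since the Lebesgue-point set depends on $v$. I would first take $v$ in a countable dense subset of $U$, intersect the corresponding full-measure sets, and then pass to all $v\in U$ by continuity of $H$ in $u$ from (A2). A secondary technical point is justifying the $o(\varepsilon)$ remainders. Since the derivatives in (A2) are only continuous, I would use the mean value theorem together with uniform continuity of $f_x,g_x$ on the compact set containing the trajectories. This needs a mild uniformity in $t$, which I would assume, e.g.\ local boundedness of the derivatives uniformly in $t$, as is standard for Carath\'eodory data. Alternatively, one may simply cite the classical PMP \cite{pontryagin2018mathematical,vinter2010optimal,clarke2013functional} under these hypotheses and verify that its normal form reduces to \eqref{eq:pmp-state}--\eqref{eq:pmp-terminal} here.
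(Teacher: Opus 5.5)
The paper gives no proof of Theorem~\ref{thm:pmp-normal}. It states it as the classical free-endpoint maximum principle, so your fallback of citing the textbook PMP is exactly the paper's treatment, and your needle-variation argument is the standard proof behind that citation. Its core is sound: the jump $y(\tau)=g(\tau,x^*(\tau),v)-g(\tau,x^*(\tau),u^*(\tau))$, the identity $\frac{d}{dt}(\lambda\tp y)=-f_x y$ that reduces the first variation to $\varepsilon[H(\tau,x^*,v,\lambda)-H(\tau,x^*,u^*,\lambda)]+o(\varepsilon)$, the countable dense subset of $U$ for the a.e.\ quantifier, and automatic normality for a free endpoint. What your route buys over the citation is that it exposes two hypotheses that Assumption~\ref{ass:data} does not contain but the statement needs. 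Standard versions of the PMP impose both, so citing does not remove them either.

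The first hypothesis is the one you flag: local optimality must admit needle variations. For a merely $L^\infty$-local maximizer, \eqref{eq:pmp-max} fails. Take $g\equiv0$, $\phi=0$, $U=[-1,2]$, $f=u^2(u-1)$: then $J(u)\le J(0)=0$ whenever $\|u\|_\infty<1$, but $\max_U H$ is attained at $u=2$.

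The second hypothesis is an integrable bound $\|f_x(t,x,u)\|\le k_R(t)$ with $k_R\in L^1$ for $\|x\|,\|u\|\le R$. Here your first paragraph needs correcting: essential boundedness of the adjoint coefficients does not follow from ``$x^*$ bounded, $U$ compact, derivatives continuous''. It holds for $g_x$ only because (A3) gives $\|g_x\|\le L$. For $f_x$, continuity in $(x,u)$ at each fixed $t$ gives nothing uniform in $t$. Compactness of $U$ is also not part of Assumption~\ref{ass:data}, though you do not need it, since $u^*\in L^\infty$ and each needle value $v$ is fixed.

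The bound is needed earlier than your last paragraph suggests: not only for the $o(\varepsilon)$ remainders, but for $\lambda$ to exist at all. Take $n=m=1$, $g=u$, $U=\{0\}$, $x_0=0$, $\phi=0$, $f(t,x,u)=t\sin(x/t^3)$. Then (A1)--(A5) hold and $u^*\equiv0$ is trivially optimal, yet \eqref{eq:pmp-adjoint} reads $\dot\lambda=-t^{-2}$, which has no $W^{1,1}$ solution.

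Once $k_R$ is assumed, you get exactly the claimed regularity $\lambda\in W^{1,1}$, and the remainders need no uniform continuity in $t$. $C^1$-dependence on $x$ for a.e.\ $t$ plus dominated convergence gives $\varepsilon^{-1}\int_\tau^T[f(t,x_\varepsilon,u^*)-f(t,x^*,u^*)]\,dt\to\int_\tau^T f_x y\,dt$. The mean value theorem gives $\int_{\tau-\varepsilon}^{\tau}|f(t,x_\varepsilon,v)-f(t,x^*,v)|\,dt\le C\varepsilon\int_{\tau-\varepsilon}^{\tau}k_R\,dt=o(\varepsilon)$.
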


\begin{remark}[Normality]\label{rem:normality}
In the general principle the payoff carries a multiplier $\lambda_0\ge0$, with
the normal case $\lambda_0=1$. Abnormal extremals ($\lambda_0=0$) are excluded
here because the sensitivity theory interprets the isoperimetric multiplier as a
shadow price. Section~\ref{sec:sensitivity} makes the required constraint
qualification explicit.
\end{remark}

\begin{remark}[Maximization convention]\label{rem:max-convention}
Problem \eqref{eq:unconstrained-problem} is a maximization, so
\eqref{eq:pmp-max} is a pointwise maximization. The linear-quadratic examples of
Section~\ref{sec:lq} are formulated as minimization problems, and the extremality
condition there is written in minimization form.
\end{remark}

\begin{proposition}[Mangasarian sufficiency]\label{prop:mangasarian}
Let $u^*,x^*,\lambda$ satisfy \eqref{eq:pmp-state}--\eqref{eq:pmp-terminal}.
If, for a.e.\ $t$ and fixed $\lambda=\lambda(t)$, the map $(x,u)\mapsto
H(t,x,u,\lambda)$ is concave on $\Rn\times U$ and $\phi$ is concave, then $u^*$
is a global maximizer.
\end{proposition}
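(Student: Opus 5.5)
The plan is the standard Mangasarian argument: compare $J(u)-J(u^*)$ for an arbitrary admissible $u\in\U$, with trajectory $x=x_u$ from Proposition~\ref{prop:wellposed}, and show that it is nonpositive using concavity of $H$ in $(x,u)$ and of $\phi$. Write $H^*(t):=H(t,x^*(t),u^*(t),\lambda(t))$ and $H(t):=H(t,x(t),u(t),\lambda(t))$. Since $f=H-\lambda\tp g$ and $g=\dot x$ along each trajectory, we have
\[
J(u)-J(u^*)=\int_0^T\bigl[H(t)-H^*(t)-\lambda(t)\tp(\dot x(t)-\dot x^*(t))\bigr]\,dt+\phi(x(T))-\phi(x^*(T)).
\]

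\emph{Terminal term.} Concavity and differentiability of $\phi$ give
\[
\phi(x(T))-\phi(x^*(T))\le\nabla\phi(x^*(T))\tp(x(T)-x^*(T))=\lambda(T)\tp(x(T)-x^*(T)),
\]
by the transversality condition \eqref{eq:pmp-terminal}.

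\emph{Integrand.} For a.e.\ $t$, the map $(x,u)\mapsto H(t,x,u,\lambda(t))$ is concave on $\Rn\times U$ and $C^1$ by (A2). This is the step that needs care, because $U$ is only a convex compact set and $H$ need not be differentiable in $u$ on the boundary in the usual sense. I would argue via the supergradient inequality at $(x^*(t),u^*(t))$ restricted to the convex set $\Rn\times U$:
\[
H(t)-H^*(t)\le H_x^*(t)\tp(x(t)-x^*(t))+H_u^*(t)\tp(u(t)-u^*(t)).
\]
The maximum condition \eqref{eq:pmp-max} says that $u^*(t)$ maximizes the concave $C^1$ function $H(t,x^*(t),\cdot,\lambda(t))$ over the convex set $U$. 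The first-order variational inequality therefore gives $H_u^*(t)\tp(v-u^*(t))\le0$ for all $v\in U$, in particular for $v=u(t)$. Hence
\[
H(t)-H^*(t)\le H_x^*(t)\tp(x-x^*)=-\dot\lambda\tp(x-x^*),
\]
using the adjoint equation \eqref{eq:pmp-adjoint}.

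\emph{Integration by parts.} Substituting, the integrand is at most $-\frac{d}{dt}\bigl[\lambda\tp(x-x^*)\bigr]$. This product is absolutely continuous, since $\lambda\in W^{1,1}$ and $x-x^*\in W^{1,\infty}$, so integration by parts is justified. Because $x(0)=x^*(0)=x_0$, the integral is bounded by $-\lambda(T)\tp(x(T)-x^*(T))$. This cancels exactly with the terminal estimate, so $J(u)\le J(u^*)$.

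\emph{Technical points.} The main obstacle is justifying the supergradient inequality in $u$ when $u^*(t)\in\partial U$. If differentiability of $H$ in $u$ holds on a neighbourhood of $U$ (as (A2) states, with $C^1$ in $(x,u)$ on all of $\Rm$), the inequality follows from concavity on the convex set by taking one-sided directional derivatives along segments in $\Rn\times U$. The remaining steps are integrability checks: $f$ and $\phi$ are controlled by (A4)--(A5), and the states are bounded.
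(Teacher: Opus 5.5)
Your proposal is correct and follows the same route as the paper. Both arguments rewrite $J(u)-J(u^*)$ via $f=H-\lambda\tp g$, integrate $\lambda\tp(\dot x-\dot x^*)$ by parts using $x(0)=x^*(0)$, and conclude with the adjoint equation, transversality, concavity of $\phi$, and concavity of $H$ together with the maximum condition. The only difference is that you spell out a step the paper compresses into one sentence: the supergradient inequality on $\Rn\times U$ combined with the variational inequality $H_u^*(t)\tp(u(t)-u^*(t))\le0$, which holds because (A2) gives differentiability in $u$ on all of $\Rm$.
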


\begin{proof}
For arbitrary $u\in\U$ with $x=x_u$, using
$f=H-\lambda\tp g$ and $\dot x=g$,
\[
J(u^*)-J(u)=\int_0^T\!\bigl[H(t,x^*,u^*,\lambda)-H(t,x,u,\lambda)
-\lambda\tp(\dot x^*-\dot x)\bigr]dt+\phi(x^*(T))-\phi(x(T)).
\]
Since $x^*(0)=x(0)$, integration by parts gives
$-\int_0^T\lambda\tp(\dot x^*-\dot x)\,dt
=\int_0^T\dot\lambda\tp(x^*-x)\,dt-\lambda(T)\tp(x^*(T)-x(T))$.
Substituting $\dot\lambda=-H_x(t,x^*,u^*,\lambda)$ and
$\lambda(T)=\nabla\phi(x^*(T))$,
\[
J(u^*)-J(u)=\int_0^T\!\bigl[H(t,x^*,u^*,\lambda)-H(t,x,u,\lambda)
-H_x(t,x^*,u^*,\lambda)\tp(x^*-x)\bigr]dt+\Delta_\phi,
\]
with $\Delta_\phi=\phi(x^*(T))-\phi(x(T))-\nabla\phi(x^*(T))\tp(x^*(T)-x(T))\ge0$
by concavity of $\phi$. The integrand is nonnegative by concavity of $H$ and the
maximization \eqref{eq:pmp-max}. Hence $J(u^*)\ge J(u)$.
\end{proof}

\section{Isoperimetric Constraints and Dual Sensitivity}
\label{sec:sensitivity}

We now introduce a scalar equality constraint and study the sensitivity of the
constrained value function. Throughout,
  $\Cc(u):=\int_0^T h(t,x_u(t),u(t))\,dt$,
and, for $B\in\R$,
\begin{equation}\label{eq:constrained-value}
  V(B):=\sup\bigl\{J(u):u\in\U,\ \Cc(u)=B\bigr\},
\end{equation}
with $V(B)=-\infty$ if $B$ is infeasible, so
$\dom V=\{\Cc(u):u\in\U\}$.

\begin{assumption}[Isoperimetric integrand]\label{ass:h}
$h:[0,T]\times\Rn\times\Rm\to\R$ is measurable in $t$, $C^1$ in $(x,u)$ a.e.,
and satisfies $|h(t,x,u)|\le c_h(1+\|x\|^2+\|u\|^2)$ on
$[0,T]\times\Rn\times U$.
\end{assumption}

\subsection{Augmented formulation and augmented PMP}

Writing $z(t)=\int_0^t h(s,x(s),u(s))\,ds$ gives $\dot z=h$, $z(0)=0$, and the
endpoint condition $z(T)=B$. The augmented Hamiltonian is
\begin{equation}\label{eq:augmented-hamiltonian}
  \Ht(t,x,u,\lambda,\mu):=f(t,x,u)+\lambda\tp g(t,x,u)+\mu\,h(t,x,u),
\end{equation}
with $\mu\in\R$ the adjoint of $z$. Since $\Ht$ is independent of $z$, the
$z$-adjoint equation forces $\mu$ to be constant.

\begin{theorem}[Normal PMP for the isoperimetric problem]\label{thm:pmp-isoperimetric}
Assume Assumptions~\ref{ass:data},~\ref{ass:compact-closed},~\ref{ass:h}. Let
$B\in\dom V$ and let $u^*$ be a normal local maximizer for $V(B)$, with
$x^*=x_{u^*}$. Then there exist $\lambda\in W^{1,1}(0,T;\Rn)$ and a constant
$\mu\in\R$ such that, a.e.\ on $[0,T]$,
\begin{align}
  \dot x^*(t)&=g(t,x^*(t),u^*(t)),\label{eq:iso-pmp-state}\\
  \dot\lambda(t)&=-\nabla_x\Ht(t,x^*(t),u^*(t),\lambda(t),\mu),
  \label{eq:iso-pmp-adjoint}\\
  \Ht(t,x^*(t),u^*(t),\lambda(t),\mu)&=\max_{u\in U}
  \Ht(t,x^*(t),u,\lambda(t),\mu),\label{eq:iso-pmp-max}
\end{align}
with the transversality and feasibility conditions
\begin{align}
  \lambda(T)&=\nabla\phi(x^*(T)),\label{eq:iso-pmp-terminal}\\
  \int_0^T h(t,x^*(t),u^*(t))\,dt&=B.\label{eq:iso-constraint-satisfied}
\end{align}
\end{theorem}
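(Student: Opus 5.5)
The approach is to reduce Theorem~\ref{thm:pmp-isoperimetric} to a general endpoint-constrained maximum principle via the augmentation already described. We introduce the augmented state $y=(x,z)\in\R^{n+1}$ with dynamics
\[
\dot y=G(t,y,u):=\bigl(g(t,x,u),\,h(t,x,u)\bigr),\qquad y(0)=(x_0,0),
\]
and the terminal constraint $z(T)=B$. The payoff is unchanged, $J=\int_0^T f\,dt+\phi(x(T))$. A control $u^*$ is a local maximizer for $V(B)$ exactly when it is a local maximizer of this augmented problem with a fixed terminal value of the last coordinate. By Assumption~\ref{ass:h}, $G$ inherits measurability in $t$ and $C^1$ dependence on $(x,u)$. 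Since $u$ ranges in the compact set $U$ and the trajectories are bounded by Proposition~\ref{prop:wellposed}, the growth bounds are effectively local. Hence the standard hypotheses for the maximum principle with terminal equality constraints hold along a bounded neighbourhood of $x^*$.

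The first step invokes the general (possibly abnormal) maximum principle with endpoint constraints. It yields $\lambda_0\ge0$, an adjoint $(\lambda,\nu)\in W^{1,1}(0,T;\R^{n+1})$, and a terminal multiplier for the constraint $z(T)=B$, not all zero. They satisfy the adjoint equation for the Hamiltonian $\lambda_0 f+\lambda\tp g+\nu h$, pointwise maximization over $U$, and transversality. Transversality reads $\lambda(T)=\lambda_0\nabla\phi(x^*(T))$, while $\nu(T)$ is free, since $z(T)$ is prescribed. The second step observes that the augmented Hamiltonian does not depend on $z$. Therefore $\dot\nu=-\partial_z(\cdot)=0$, so $\nu\equiv\mu$ is a constant.

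The third step uses normality. By hypothesis $u^*$ is a normal local maximizer, in the sense of Remark~\ref{rem:normality}, so we may take $\lambda_0=1$. This turns the Hamiltonian into $\Ht$ of \eqref{eq:augmented-hamiltonian}. The adjoint equation \eqref{eq:iso-pmp-adjoint}, the maximum condition \eqref{eq:iso-pmp-max}, and the transversality condition \eqref{eq:iso-pmp-terminal} then follow directly. The state equation \eqref{eq:iso-pmp-state} and the feasibility condition \eqref{eq:iso-constraint-satisfied} simply restate the fact that $u^*$ is admissible with $z(T)=B$.

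The main obstacle is the first step. We must check that the endpoint-constrained maximum principle applies under the stated regularity, namely data that is only $C^1$ in $(x,u)$ and measurable in $t$. Either one cites a nonsmooth version (e.g.\ Clarke or Vinter) requiring integrable local Lipschitz bounds along $x^*$, which the growth conditions and compactness of $U$ provide. Or one runs needle variations directly: the attainable cone of $(J,z(T))$ is convex, and separating it from the ray of improvement gives $(\lambda_0,\mu)\neq0$. The only genuinely extra hypothesis is normality, which is assumed rather than proved here. Its verification via two-sided attainability is deferred to the constraint qualification discussed later.
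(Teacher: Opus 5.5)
Your proposal is correct and follows the paper's proof essentially step for step: you augment with $z$, apply the endpoint-constrained maximum principle to $y=(x,z)$, use that $\Ht$ does not depend on $z$ to make the multiplier $\mu$ constant, and invoke normality to set $\lambda_0=1$. One caution on your closing remark: the growth conditions in (A4) and Assumption~\ref{ass:h} bound only the values of $f$ and $h$, not their $x$-derivatives, so the integrable local Lipschitz bounds along $x^*$ required by Clarke/Vinter-type maximum principles do not follow from the stated assumptions; they are a tacit extra hypothesis, which the paper's proof also leaves implicit.
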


\begin{proof}
Apply the normal-form Pontryagin maximum principle with endpoint equality
constraints to the augmented state $y=(x,z)$ with dynamics $\dot y=(g,h)$, $y(0)=(x_0,0)$, and the prescribed endpoint $z(T)=B$. 
This is the endpoint-constrained version of Theorem~\ref{thm:pmp-normal}; the
fixed terminal condition $z(T)=B$ introduces the constant multiplier $\mu$
rather than a free-endpoint transversality condition for $z$. 
The adjoint splits as $(\lambda,\mu)$; the augmented
Hamiltonian is \eqref{eq:augmented-hamiltonian}. Because $\Ht$ is independent of
$z$, the $z$-adjoint equation reads $\dot\mu=-\partial_z\Ht=0$, so $\mu$ is
constant. The remaining relations are
\eqref{eq:iso-pmp-adjoint}--\eqref{eq:iso-pmp-max} with
$\lambda(T)=\nabla\phi(x^*(T))$.
\end{proof}

\subsection{Affine reduction of the isoperimetric constraint}
\label{subsec:affine-constraint-reduction}
We now show explicitly that, under linear dynamics and an affine
isoperimetric integrand, the constraint functional \(\mathcal C\) is affine in
the control. Suppose
    $\dot x(t)=A(t)x(t)+B(t)u(t)$,
    $x(0)=x_0$,
where \(A\in L^\infty(0,T;\mathbb R^{n\times n})\) and
\(B\in L^\infty(0,T;\mathbb R^{n\times m})\). Let \(\Phi(t,s)\) denote the
state-transition matrix generated by \(A(\cdot)\), i.e.
    $\partial_t\Phi(t,s)=A(t)\Phi(t,s)$, $\Phi(s,s)=I_n$.
Then the trajectory generated by \(u\) is
\begin{equation}\label{eq:variation-of-constants}
    x_u(t)
    =
    \Phi(t,0)x_0
    +
    \int_0^t \Phi(t,s)B(s)u(s)\,ds .
\end{equation}
Assume that the isoperimetric integrand is affine in \((x,u)\):
\begin{equation}\label{eq:affine-integrand}
    h(t,x,u)
    =
    a(t)^\top x+b(t)^\top u+c(t),
\end{equation}
where
    $a\in L^\infty(0,T;\mathbb R^n)$,
    $b\in L^\infty(0,T;\mathbb R^m)$,
    $c\in L^1(0,T)$.
The accumulated constraint functional is
\begin{equation}\label{eq:C-definition-affine}
    \mathcal C(u)
    =
    \int_0^T h(t,x_u(t),u(t))\,dt .
\end{equation}
Substituting \eqref{eq:affine-integrand} into \eqref{eq:C-definition-affine}
gives
\[
    \mathcal C(u)
    =
    \int_0^T
    \left[
        a(t)^\top x_u(t)
        +
        b(t)^\top u(t)
        +
        c(t)
    \right]dt .
\]
Using the representation \eqref{eq:variation-of-constants}, we obtain
\[
    \mathcal C(u)=
    \int_0^T
    a(t)^\top
    \left[
        \Phi(t,0)x_0
        +
        \int_0^t\Phi(t,s)B(s)u(s)\,ds
    \right]dt
    +
    \int_0^T b(t)^\top u(t)\,dt
    +
    \int_0^T c(t)\,dt .
\]
Separating the control-independent and control-dependent terms yields
\begin{equation}\label{eq:C-separated}
    \mathcal C(u)
    =
    \underbrace{
    \int_0^T a(t)^\top\Phi(t,0)x_0\,dt
    +
    \int_0^T c(t)\,dt
    }_{\text{constant part}}
    +
    \int_0^T b(t)^\top u(t)\,dt
    +
    \int_0^T
    a(t)^\top
    \left[
        \int_0^t\Phi(t,s)B(s)u(s)\,ds
    \right]dt .
\end{equation}
We now rewrite the double integral as a single integral in \(u\). First,
\[
\begin{aligned}
    I(u) :=
    \int_0^T
    a(t)^\top
    \left[
        \int_0^t\Phi(t,s)B(s)u(s)\,ds
    \right]dt =
    \int_0^T
    \left[
        \int_s^T
        a(t)^\top\Phi(t,s)B(s)
        \,dt
    \right]
    u(s)\,ds .
\end{aligned}
\]
Using the identity
    $a(t)^\top\Phi(t,s)B(s)
    =
    \left[
        B(s)^\top\Phi(t,s)^\top a(t)
    \right]^\top$,
we may write
\begin{equation}\label{eq:double-integral-vector-form}
\begin{aligned}
    I(u)
    &=
    \int_0^T
    \left[
        \int_s^T
        B(s)^\top\Phi(t,s)^\top a(t)\,dt
    \right]^\top
    u(s)\,ds .
\end{aligned}
\end{equation}
Define the scalar constant
\[
    \delta
    :=
    \int_0^T a(t)^\top\Phi(t,0)x_0\,dt
    +
    \int_0^T c(t)\,dt,
\]
and define the reduced constraint kernel
\[
    \gamma(s)
    :=
    b(s)
    +
    \int_s^T B(s)^\top\Phi(t,s)^\top a(t)\,dt,
    \qquad 0\le s\le T .
\]
Combining \eqref{eq:C-separated} and
\eqref{eq:double-integral-vector-form}, we obtain the affine representation
\[
    \mathcal C(u)
    =
    \delta
    +
    \int_0^T \gamma(s)^\top u(s)\,ds .
\]
Equivalently, using the \(L^2\)-duality pairing,
    $\mathcal C(u)
    =
    \delta+\langle \gamma,u\rangle_{L^2(0,T;\mathbb R^m)}$.
Thus \(\mathcal C\) is an affine functional of \(u\).
For later use, the reduced equality constraint \(\mathcal C(u)=B_0\) can be
written as
    $\langle\gamma,u\rangle_{L^2}
    +
    \delta
    -
    B_0
    =
    0$.
Hence the reduced constraint map is
    $\widehat G(u)
    :=
    \mathcal C(u)-B_0
    =
    \langle\gamma,u\rangle_{L^2}
    +
    \delta
    -
    B_0$.
Its Fréchet derivative is the constant linear functional
    $D\widehat G(u)v
    =
    \langle\gamma,v\rangle_{L^2(0,T;\mathbb R^m)}$
    for all $u,v\in L^2(0,T;\mathbb R^m)$.
Therefore,
    $\widehat G(u+v)
    =
    \widehat G(u)
    +
    D\widehat G(u)v
    =
    \widehat G(u)
    +
    \langle\gamma,v\rangle_{L^2}$.
In particular, for two controls \(u_0,u_1\in\mathcal U\) and
\(\theta\in[0,1]\), setting
    $u_\theta=(1-\theta)u_0+\theta u_1$,
we obtain
\[
\begin{aligned}
    \mathcal C(u_\theta)
    &=
    \delta
    +
    \int_0^T
    \gamma(t)^\top
    \bigl[(1-\theta)u_0(t)+\theta u_1(t)\bigr]dt
    \\
    &=
    (1-\theta)
    \left[
        \delta+\int_0^T\gamma(t)^\top u_0(t)\,dt
    \right]
    +
    \theta
    \left[
        \delta+\int_0^T\gamma(t)^\top u_1(t)\,dt
    \right]
    \\
    &=
    (1-\theta)\mathcal C(u_0)+\theta\mathcal C(u_1).
\end{aligned}
\]
Consequently, if \(\mathcal C(u_0)=B_0\) and \(\mathcal C(u_1)=B_1\), then
\begin{equation}\label{eq:level-interpolation}
    \mathcal C(u_\theta)
    =
    (1-\theta)B_0+\theta B_1.
\end{equation}
This is the precise algebraic reason why, in the structured affine-constraint
class, the attainable set of isoperimetric levels is an interval and why the
constraint qualification can be expressed as two-sided attainability of the
level \(B_0\).

\subsection{A structured class with provable strong duality}

We now isolate a class in which concavity and strong duality of $V$ are
\emph{theorems} rather than assumptions. The key is that the data make $J$
concave and $\Cc$ affine \emph{as functionals of $u$}, so that the equality
level enters as an affine perturbation.

\begin{assumption}[Structured class]\label{ass:structured}
The dynamics are linear,
\[
  \dot x(t)=A(t)x(t)+B(t)u(t),\qquad x(0)=x_0,
\]
with $A,B$ measurable and bounded; $U\subset\Rm$ is nonempty, compact, convex;
$f(t,\cdot,\cdot)$ is concave and upper semicontinuous and $\phi$ is concave and
upper semicontinuous; and the isoperimetric integrand is affine,
  $h(t,x,u)=a(t)\tp x+b(t)\tp u+c(t)$,
with $a,b,c$ measurable and bounded. Moreover, the reduced payoff 
\[
    J(u):=\int_0^T f(t,x_u(t),u(t))\,dt+\phi(x_u(T))
\]
is weak-$*$ upper semicontinuous on $\U$.
\end{assumption}

\begin{lemma}[Affine control-to-state map]\label{lem:affine-map}
Under Assumption~\ref{ass:structured}, the map $u\mapsto x_u$ is affine and
weak-$*$ continuous from $\U$ to $C([0,T];\Rn)$. Consequently $J$ is concave and
weak-$*$ upper semicontinuous on $\U$, and $\Cc$ is affine and weak-$*$
continuous on $\U$.
\end{lemma}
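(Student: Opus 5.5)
The plan is to work directly from the variation-of-constants formula \eqref{eq:variation-of-constants} and the affine reduction of Section~\ref{subsec:affine-constraint-reduction}.

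\textbf{Affinity.} The map $u\mapsto x_u(t)=\Phi(t,0)x_0+\int_0^t\Phi(t,s)B(s)u(s)\,ds$ is affine for each $t$, because it is a constant plus a linear operator applied to $u$.

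\textbf{Pointwise weak-$*$ continuity.} Suppose $u_j\rightharpoonup^* u$ in $L^\infty$. For fixed $t$, the kernel $s\mapsto \mathbf 1_{[0,t]}(s)\Phi(t,s)B(s)$ has rows in $L^1(0,T;\Rm)$, since $\Phi$ is continuous and bounded on $[0,T]^2$ and $B$ is bounded. Hence $x_{u_j}(t)\to x_u(t)$ for every $t$.

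\textbf{Upgrade to uniform convergence.} Since $U$ is compact, $\|u_j\|_{L^\infty}\le M$. Then $\|\dot x_{u_j}\|\le \|A\|_\infty\|x_{u_j}\|+\|B\|_\infty M$, and Gronwall bounds $x_{u_j}$ in $W^{1,\infty}$. The family is therefore equicontinuous and bounded. By Arzel\`a--Ascoli, every subsequence has a uniformly convergent sub-subsequence, and its limit must be the pointwise limit $x_u$. So the whole sequence converges uniformly.

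Sequential continuity is enough here. On the bounded set $\U$, the weak-$*$ topology is metrizable because $L^1$ is separable, so sequential continuity gives genuine weak-$*$ continuity into $C([0,T];\Rn)$. I expect this step, obtaining uniform rather than pointwise convergence, to be the only mildly delicate point. The equicontinuity argument is the route I would commit to.

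\textbf{Concavity of $J$.} Take $u_\theta=(1-\theta)u_0+\theta u_1$. Affinity gives $x_{u_\theta}=(1-\theta)x_{u_0}+\theta x_{u_1}$. Joint concavity of $f(t,\cdot,\cdot)$ then gives, pointwise in $t$,
\[
f(t,x_{u_\theta},u_\theta)\ge(1-\theta)f(t,x_{u_0},u_0)+\theta f(t,x_{u_1},u_1).
\]
Concavity of $\phi$ gives the corresponding terminal inequality. Integrating yields concavity of $J$; the integrals are finite by the growth bounds. Weak-$*$ upper semicontinuity of $J$ is part of Assumption~\ref{ass:structured}, so it is simply recorded.

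\textbf{$\Cc$ affine and continuous.} The affine reduction already shows $\Cc(u)=\delta+\int_0^T\gamma^\top u\,dt$, with $\gamma\in L^\infty\subset L^1$ because $a$, $b$, $B$, $\Phi$ are bounded. Pairing against an $L^1$ function is weak-$*$ continuous by definition, so $\Cc$ is affine and weak-$*$ continuous.
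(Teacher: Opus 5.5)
Your proof is correct and follows the paper's route. You use the variation-of-constants formula with an $L^1$ kernel for affinity and weak-$*$ continuity, concavity of $J$ from the affine map $u\mapsto(x_u,u)$, weak-$*$ upper semicontinuity taken from Assumption~\ref{ass:structured}, and affinity and continuity of $\Cc$ (via the reduced kernel $\gamma$ of Section~\ref{subsec:affine-constraint-reduction}, where the paper uses $\int_0^T(a\tp x_u+b\tp u+c)\,dt$ directly). Your Arzel\`a--Ascoli upgrade from pointwise to uniform convergence, with the metrizability of the weak-$*$ topology on the bounded set $\U$, supplies a step the paper leaves implicit when it asserts continuity into $C([0,T];\Rn)$ from the $L^1$ kernel alone.
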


\begin{proof}
With state transition matrix $\Phi(t,s)$ of $A$,
\[
  x_u(t)=\Phi(t,0)x_0+\int_0^t\Phi(t,s)B(s)u(s)\,ds,
\]
which is affine in $u$; weak-$*$ continuity follows since the kernel
$s\mapsto\Phi(t,s)B(s)$ is bounded, hence in $L^1$. Thus $u\mapsto(x_u,u)$ is
affine. Consequently $J$ is concave on $\U$, and by the weak-$*$ upper
semicontinuity part of Assumption~\ref{ass:structured}, $J$ is weak-$*$
upper semicontinuous. Finally $\Cc(u)=\int_0^T\bigl(a\tp x_u+b\tp u+c\bigr)dt$ is affine in
$u$, and weak-$*$ continuous.
\end{proof}

\begin{theorem}[Concavity and interval domain]\label{thm:struct-concave}
Under Assumption~\ref{ass:structured}, $\dom V$ is an interval and $V$ is
concave and upper semicontinuous on $\dom V$.
\end{theorem}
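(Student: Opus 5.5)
The plan rests on Lemma~\ref{lem:affine-map}: $\Cc$ is affine and weak-$*$ continuous on $\U$, $J$ is concave and weak-$*$ upper semicontinuous, and $\U$ is convex and weak-$*$ compact. The last fact holds because $U$ is compact and convex, so $\U$ is bounded in $L^\infty$ and weak-$*$ closed, as in the proof of Theorem~\ref{thm:existence}.

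\emph{Interval domain.} Take $B_0=\Cc(u_0)$ and $B_1=\Cc(u_1)$ in $\dom V$, and let $B=(1-\theta)B_0+\theta B_1$. The control $u_\theta=(1-\theta)u_0+\theta u_1$ lies in $\U$ by convexity of $U$, and \eqref{eq:level-interpolation} gives $\Cc(u_\theta)=B$. Hence $\dom V=\Cc(\U)$ is convex, i.e.\ an interval. It is in fact compact, being the continuous image of a weak-$*$ compact set.

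\emph{Finiteness and attainment.} For $B\in\dom V$, the feasible set $\{u\in\U:\Cc(u)=B\}$ is nonempty and weak-$*$ closed in the compact set $\U$. Weak-$*$ upper semicontinuity of $J$ then shows that the supremum is attained, so $V(B)$ is finite. This step is needed so that concavity is not a statement about $\pm\infty$. Since $J$ is usc on a compact set, it is bounded above.

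\emph{Concavity.} Given $B_0,B_1\in\dom V$, pick maximizers $u_0,u_1$, or $\varepsilon$-maximizers if one prefers to avoid attainment. Then $u_\theta$ is feasible for level $B$, and concavity of $J$ gives
\[V(B)\ge J(u_\theta)\ge(1-\theta)J(u_0)+\theta J(u_1)=(1-\theta)V(B_0)+\theta V(B_1).\]

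\emph{Upper semicontinuity.} This is the step requiring the most care. Let $B_k\to B$ in $\dom V$ and pass to a subsequence realizing $\limsup_k V(B_k)$. Choose maximizers $u_k$ with $\Cc(u_k)=B_k$ and $J(u_k)=V(B_k)$. By Banach--Alaoglu, a further subsequence satisfies $u_k\rightharpoonup^* \bar u\in\U$. Weak-$*$ continuity of $\Cc$ gives $\Cc(\bar u)=B$, and weak-$*$ usc of $J$ gives
\[V(B)\ge J(\bar u)\ge\limsup_k J(u_k)=\limsup_k V(B_k).\]
The main technical point is the weak-$*$ continuity of $\Cc$. It follows from the reduced form $\Cc(u)=\delta+\langle\gamma,u\rangle$ with $\gamma\in L^\infty\subset L^1$, because then $\langle\gamma,u_k\rangle\to\langle\gamma,\bar u\rangle$.

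Extending $V$ by $-\infty$ off $\dom V$ keeps $V$ usc on $\R$, since $\dom V$ is closed. This is consistent with the statement.
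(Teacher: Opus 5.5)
Your proof is correct and follows the paper's route. The interval domain comes from affinity of $\Cc$ and convexity of $\U$. Concavity comes from interpolating (near-)maximizers via \eqref{eq:level-interpolation} and concavity of $J$. Upper semicontinuity comes from a Banach--Alaoglu extraction using weak-$*$ continuity of $\Cc$ and weak-$*$ upper semicontinuity of $J$.

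Your version is slightly more careful than the paper's in two places. In the usc step you treat varying levels $B_k\to B$ explicitly, whereas the paper only invokes ``the argument of Theorem~\ref{thm:existence} applied at fixed level $B$''. You also prove finiteness of $V$ on $\dom V$, which the paper's $\varepsilon$-maximizer argument tacitly assumes.
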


\begin{proof}
Since $\Cc$ is affine and $\U$ is convex, $\dom V=\Cc(\U)$ is the affine image
of a convex set, hence an interval. Let $B_0,B_1\in\dom V$, $\theta\in[0,1]$,
$B_\theta=(1-\theta)B_0+\theta B_1$. For $\varepsilon>0$ choose feasible
$u_0,u_1$ with $\Cc(u_i)=B_i$ and $J(u_i)\ge V(B_i)-\varepsilon$. Put
$u_\theta=(1-\theta)u_0+\theta u_1\in\U$ (convexity of $\U$). By \eqref{eq:level-interpolation}, $u_\theta$ is feasible for $B_\theta$, and by concavity of $J$,
  $V(B_\theta)\ge J(u_\theta)\ge(1-\theta)J(u_0)+\theta J(u_1)
  \ge(1-\theta)V(B_0)+\theta V(B_1)-\varepsilon$.
Letting $\varepsilon\downarrow0$ gives concavity. Upper semicontinuity of $V$
follows from weak-$*$ upper semicontinuity of $J$, weak-$*$ closedness of $\U$,
and weak-$*$ continuity of $\Cc$ (Lemma~\ref{lem:affine-map}) by the argument of
Theorem~\ref{thm:existence} applied at fixed level $B$.
\end{proof}

The next result is the constraint qualification. It plays the role of a Slater
condition and coincides with two-sided attainability of the constraint level.

\begin{assumption}[Constraint qualification]\label{ass:cq}
$B_0\in\intr(\dom V)$; equivalently, there exist feasible controls with
constraint levels strictly below and strictly above $B_0$.
\end{assumption}

\begin{theorem}[Dual representation by Fenchel--Moreau]
\label{thm:struct-duality}
Assume that \(V\) is proper, concave, upper semicontinuous, and finite on
\(\operatorname{int}(\dom V)\). Extend \(V\) to all of \(\mathbb R\) by setting
\(V(B)=-\infty\) for \(B\notin\dom V\). Define
\[
    \ell(\mu)
    :=
    \sup_{u\in\mathcal U}
    \{J(u)+\mu\mathcal C(u)\},
    \qquad \mu\in\mathbb R.
\]
Then, for every \(B\in\operatorname{int}(\dom V)\),
\begin{equation}\label{eq:dual-representation}
    V(B)
    =
    \inf_{\mu\in\mathbb R}
    \{\ell(\mu)-\mu B\}.
\end{equation}
\end{theorem}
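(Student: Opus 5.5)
The plan is to identify $\ell$ with the concave conjugate of $V$ and then apply Fenchel--Moreau biconjugation to the upper semicontinuous, proper, concave function $V$.

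\emph{Step 1: identify $\ell$ as a conjugate.} Write
\[
V^\circ(\mu):=\sup_{B\in\R}\{V(B)+\mu B\}, \qquad \mu\in\R.
\]
Up to the sign convention $\mu\mapsto-\mu$, this is the concave conjugate of $V$. I claim that $\ell(\mu)=V^\circ(\mu)$ for every $\mu$. To see this, split the supremum over $\U$ according to the level $B=\Cc(u)$:
\[
\ell(\mu)=\sup_{B\in\dom V}\ \sup_{u\in\U,\ \Cc(u)=B}\{J(u)+\mu B\}=\sup_{B\in\dom V}\{V(B)+\mu B\}.
\]
Since $V=-\infty$ off $\dom V$, the last expression equals $V^\circ(\mu)$. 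This step needs no structure beyond $\dom V=\Cc(\U)$. Both sides may equal $+\infty$ for some $\mu$; such $\mu$ contribute $+\infty$ to the infimum and cause no harm.

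\emph{Step 2: biconjugation.} The function $-V$ is proper, convex and lower semicontinuous on $\R$. Fenchel--Moreau gives $(-V)^{**}=-V$, which in concave form reads
\[
V(B)=\inf_{\mu\in\R}\{V^\circ(\mu)-\mu B\}.
\]
The sign is checked as follows. The convex conjugate of $-V$ is
\[
(-V)^*(p)=\sup_B\{pB+V(B)\}=V^\circ(p),
\]
so $(-V)^{**}(B)=\sup_p\{pB-V^\circ(p)\}$. Negating gives exactly $\inf_\mu\{V^\circ(\mu)-\mu B\}$. Combining this with Step 1 yields \eqref{eq:dual-representation}, and it holds for every $B\in\R$, in particular on $\intr(\dom V)$.

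\emph{Step 3: the semicontinuity hypothesis, which is the main obstacle.} Fenchel--Moreau needs $-V$ to be lower semicontinuous on all of $\R$, while the hypothesis only asserts upper semicontinuity of $V$, as in Theorem~\ref{thm:struct-concave} on $\dom V$. If $\dom V$ is not closed, the extension by $-\infty$ can fail to be upper semicontinuous at an endpoint. I would handle this in one of two ways.

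\begin{itemize}
\item Argue that under Assumption~\ref{ass:structured} the set $\dom V=\Cc(\U)$ is a compact interval. It is the continuous image of the weak-$*$ compact set $\U$ under the weak-$*$ continuous map $\Cc$ (Lemma~\ref{lem:affine-map}), so the extension is globally upper semicontinuous.
\item Alternatively, avoid the issue entirely. The biconjugate of $V$ is its upper semicontinuous hull $\operatorname{cl}V$, and a proper concave function agrees with its closure on the interior of its domain (Rockafellar, Thm.~7.4). So $\operatorname{cl}V(B)=V(B)$ for $B\in\intr(\dom V)$, and this is exactly where the theorem claims equality.
\end{itemize}

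I would present the second argument, since it uses only the stated hypotheses, together with finiteness of $V$ on the interior and properness, which ensure that $V^\circ\not\equiv+\infty$.
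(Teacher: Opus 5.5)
Your proposal is correct and follows the paper's proof. The paper likewise groups the supremum defining $\ell$ by constraint levels to obtain $\ell=W^*$ for $W=-V$ (extended by $+\infty$ off $\dom V$), applies Fenchel--Moreau $W=W^{**}$, and uses the same sign bookkeeping. Your Step~3 is a sound refinement that the paper does not make: the paper reads the upper semicontinuity hypothesis as holding on all of $\R$ when it declares $W$ lower semicontinuous there, whereas your closure argument ($W^{**}=\operatorname{cl}W$, which agrees with $W$ on $\intr(\dom V)$ by Rockafellar's Theorem~7.4) shows that the conclusion on the interior needs only properness and concavity.
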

\begin{proof}
For each \(B\in\mathbb R\), recall that
\[
    V(B)
    =
    \sup\{J(u):u\in\mathcal U,\ \mathcal C(u)=B\}.
\]
Therefore the Lagrangian dual function can be grouped by constraint levels:
\[
    \ell(\mu)=
    \sup_{u\in\mathcal U}
    \{J(u)+\mu\mathcal C(u)\} =
    \sup_{B\in\mathbb R}
    \sup_{\substack{u\in\mathcal U\\ \mathcal C(u)=B}}
    \{J(u)+\mu B\}=
    \sup_{B\in\mathbb R}
    \{V(B)+\mu B\}
    =
    \sup_{B\in\dom V}
    \{V(B)+\mu B\}.
\]
Now define the convex value function
    $W(B):=-V(B)$,
with the convention \(W(B)=+\infty\) for \(B\notin\dom V\). Since \(V\) is
concave and upper semicontinuous, \(W\) is convex and lower semicontinuous.
Moreover, since \(V\) is proper, \(W\) is proper.
The Fenchel conjugate of \(W\) is
\[
    W^*(\mu)=
    \sup_{B\in\mathbb R}
    \{\mu B-W(B)\}=
    \sup_{B\in\mathbb R}
    \{\mu B+V(B)\}=
    \sup_{B\in\dom V}
    \{V(B)+\mu B\}=
    \ell(\mu).
\]
Hence $W^*(\mu)=\ell(\mu)$.
By the Fenchel--Moreau theorem, since \(W\) is proper, convex, and lower
semicontinuous,
    $W(B)=W^{**}(B)
    =
    \sup_{\mu\in\mathbb R}
    \{\mu B-W^*(\mu)\}$.
Using \(W^*(\mu)=\ell(\mu)\), this becomes
    $W(B)
    =
    \sup_{\mu\in\mathbb R}
    \{\mu B-\ell(\mu)\}$.
Since \(W=-V\), we obtain
    $V(B)=
    \inf_{\mu\in\mathbb R}
    \{\ell(\mu)-\mu B\}$.
This proves the desired dual representation.
\end{proof}
\begin{remark}[Weak duality versus exact representation]
For every fixed \(\mu\in\mathbb R\), the inequality
    $V(B)\le \ell(\mu)-\mu $B
is only weak duality. Indeed, if \(u\) is feasible at level \(B\), then
    $J(u)+\mu B
    =
    J(u)+\mu\mathcal C(u)
    \le
    \ell(\mu)$,
and hence
    $J(u)\le \ell(\mu)-\mu B$.
Taking the supremum over all \(u\) satisfying \(\mathcal C(u)=B\) gives
    $V(B)\le \ell(\mu)-\mu B$.
Therefore
    $V(B)\le
    \inf_{\mu\in\mathbb R}
    \{\ell(\mu)-\mu B\}$.
The Fenchel--Moreau argument above is precisely what upgrades this weak-duality
inequality to the exact identity
    $V(B)=
    \inf_{\mu\in\mathbb R}
    \{\ell(\mu)-\mu B\}$.
\end{remark}

\subsection{Superdifferential identity}
For a concave function \(V\), recall that the superdifferential at
\(B\in\operatorname{int}(\dom V)\) is
\[
    \partial^+V(B)
    :=
    \left\{
        p\in\mathbb R:
        V(\widetilde B)-V(B)
        \le
        p(\widetilde B-B)
        \ \text{for all }\widetilde B\in\dom V
    \right\}.
\]
Let
    $\mathcal D(B)
    :=
    \argmin_{\mu\in\mathbb R}
    \{\ell(\mu)-\mu B\}$
denote the set of dual-optimal multipliers at level \(B\).
\begin{theorem}[Superdifferential identity]
\label{thm:conj-representation}
Assume that \(V\) is proper, concave, upper semicontinuous, and finite on
\(\operatorname{int}(\dom V)\). Suppose that the dual representation
    $V(B)
    =
    \inf_{\mu\in\mathbb R}
    \{\ell(\mu)-\mu B\}$
holds and that the infimum is attained at every
\(B\in\operatorname{int}(\dom V)\). Then, for every
\(B\in\operatorname{int}(\dom V)\),
    $\partial^+V(B)
    =
    -\mathcal D(B)$.
Equivalently,
    $p\in\partial^+V(B)$ iff
    $-p\in\mathcal D(B)$.
In particular $V$ is differentiable at $B$ iff $\mathcal D(B)$ is a singleton
$\{\mu\}$, and then $V'(B)=-\mu$; moreover $V$ is differentiable at
Lebesgue-a.e.\ $B\in\intr(\dom V)$.
\end{theorem}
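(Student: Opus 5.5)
The plan is to reduce everything to the Fenchel conjugate relation already established in the proof of Theorem~\ref{thm:struct-duality}. Set $W:=-V$, extended by $+\infty$ off $\dom V$; it is proper, convex and lower semicontinuous. That proof gives $W^*=\ell$, and Fenchel--Moreau gives $W=W^{**}$. I will then use the standard conjugate subgradient characterization: for a proper convex lsc $W$,
\[
\mu\in\partial W(B)\iff W(B)+W^*(\mu)=\mu B\iff \mu\in\argmax_{\nu}\{\nu B-W^*(\nu)\}.
\]
Since $\partial^+V(B)=-\partial W(B)$ (multiply the subgradient inequality by $-1$), the task becomes matching $\partial W(B)$ with $\mathcal D(B)$.

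\textbf{Step 1: $-\mathcal D(B)\subseteq\partial^+V(B)$.} Take $\mu\in\mathcal D(B)$. By the dual representation, $V(B)=\ell(\mu)-\mu B$. Weak duality at every $\widetilde B$ (the remark above) gives $V(\widetilde B)\le \ell(\mu)-\mu\widetilde B$. Subtracting the two relations yields $V(\widetilde B)-V(B)\le -\mu(\widetilde B-B)$ for all $\widetilde B\in\dom V$, so $-\mu\in\partial^+V(B)$. This direction is elementary and uses no new machinery.

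\textbf{Step 2: $\partial^+V(B)\subseteq-\mathcal D(B)$.} Let $p\in\partial^+V(B)$ and put $\mu=-p$. The supergradient inequality reads $V(\widetilde B)+\mu\widetilde B\le V(B)+\mu B$ for all $\widetilde B$. Taking the supremum over $\widetilde B$ and using $\ell(\mu)=\sup_{\widetilde B}\{V(\widetilde B)+\mu\widetilde B\}$ (shown in the proof of Theorem~\ref{thm:struct-duality}) gives $\ell(\mu)-\mu B\le V(B)$. Weak duality gives the reverse inequality, so $\mu$ attains the infimum and lies in $\mathcal D(B)$. Attainment is therefore not needed for this inclusion. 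The attainment hypothesis, together with Step 1, ensures that both sets are nonempty. This also follows independently, since $B\in\intr(\dom V)$ and a finite concave function has a nonempty compact superdifferential at interior points.

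\textbf{Step 3: differentiability.} A finite concave function of one variable on an open interval has one-sided derivatives $V'_+(B)\le V'_-(B)$, and $\partial^+V(B)=[V'_+(B),V'_-(B)]$. Hence $V$ is differentiable at $B$ exactly when $\partial^+V(B)$ is a singleton. By Steps 1--2 this happens exactly when $\mathcal D(B)=\{\mu\}$, and then $V'(B)=-\mu$. For the a.e.\ statement, $V'_-$ is nonincreasing, so it has at most countably many discontinuities, and at every continuity point $V'_+=V'_-$. Alternatively, one can invoke local Lipschitz continuity together with Rademacher's theorem.

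The main obstacle I expect is only bookkeeping. I need to ensure that finiteness of $V$ on $\intr(\dom V)$ and the extended-value conventions make the identity $\ell(\mu)=\sup_{B}\{V(B)+\mu B\}$ valid even when $\ell(\mu)=+\infty$, and that the supergradient inequality is tested over all of $\dom V$, matching the definition given above.
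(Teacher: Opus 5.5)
Your proposal is correct and follows essentially the paper's route: both set $W=-V$, use $W^*=\ell$, and get the two inclusions from the Fenchel--Young equality $W(B)+W^*(\mu)=\mu B\iff\mu\in\partial W(B)$, which you unpack into weak duality plus the level-grouping formula $\ell(\mu)=\sup_{\widetilde B}\{V(\widetilde B)+\mu\widetilde B\}$. Your Step~3, using one-sided derivatives, $\partial^+V(B)=[V'_+(B),V'_-(B)]$, and the countably many jumps of the monotone $V'_-$, proves the differentiability and a.e.\ claims, which the paper's proof states but never argues.
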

\begin{proof}
Define the convex function
    $W(B):=-V(B)$,
with the convention \(W(B)=+\infty\) outside \(\dom V\). Since \(V\) is proper,
concave, and upper semicontinuous, \(W\) is proper, convex, and lower
semicontinuous. From the Fenchel sign-chain computation,
    W$^*(\mu)=\ell(\mu)$,
where
    $W^*(\mu)
    =
    \sup_{B\in\mathbb R}
    \{\mu B-W(B)\}$.
We first prove
    $\mathcal D(B)\subset -\partial^+V(B)$.
Let \(\mu\in\mathcal D(B)\). By definition of \(\mathcal D(B)\),
    $V(B)
    =
    \ell(\mu)-\mu B$.
Since \(W=-V\) and \(W^*=\ell\), this identity is equivalent to
    $W(B)+W^*(\mu)
    =
    \mu B$.
By the Fenchel--Young equality condition,
\[
    W(B)+W^*(\mu)=\mu B
    \quad\Longleftrightarrow\quad
    \mu\in\partial W(B).
\]
Hence
    $\mu\in\partial W(B)$.
By definition of the convex subdifferential,
\[
    \mu\in\partial W(B)
    \quad\Longleftrightarrow\quad
    W(\widetilde B)-W(B)
    \ge
    \mu(\widetilde B-B)
    \quad
    \text{for all }\widetilde B\in\mathbb R.
\]
Substituting \(W=-V\), we obtain
    $V(\widetilde B)-V(B)
    \le
    -\mu(\widetilde B-B)$
    for all $\widetilde B\in\dom V$.
Therefore
    $-\mu\in\partial^+V(B)$.
Thus
    $-\mathcal D(B)\subset \partial^+V(B)$.
Conversely, let
    $p\in\partial^+V(B)$.
Then, by definition,
    $V(\widetilde B)-V(B)
    \le
    p(\widetilde B-B)$
    for all $\widetilde B\in\dom V$.
Equivalently,
    $W(\widetilde B)
    \ge
    W(B)-p(\widetilde B-B)$
    for all $\widetilde B\in\dom V$.
Hence
    $-p\in\partial W(B)$.
Again using Fenchel--Young equality,
\[
    -p\in\partial W(B)
    \quad\Longleftrightarrow\quad
    W(B)+W^*(-p)=(-p)B.
\]
Since \(W=-V\) and \(W^*=\ell\), this becomes
    $V(B)=\ell(-p)-(-p)B$.
Therefore \(-p\) attains the dual infimum:
\[
    -p\in
    \argmin_{\mu\in\mathbb R}
    \{\ell(\mu)-\mu B\}
    =
    \mathcal D(B).
\]
Thus $\partial^+V(B)\subset -\mathcal D(B)$.
Combining the two inclusions concludes.
\end{proof}
\begin{corollary}[Envelope inequality and differentiability]
\label{cor:envelope}
Under the hypotheses of Theorem~\ref{thm:conj-representation}, if
\(\mu^*\in\mathcal D(B)\), then
$-\mu^*\in\partial^+V(B)$.
Equivalently,
    $V(\widetilde B)-V(B)
    \le
    -\mu^*(\widetilde B-B)$
    for all $\widetilde B\in\dom V$.
If \(V\) is differentiable at \(B\), then
    $V'(B)=-\mu^*$.
\end{corollary}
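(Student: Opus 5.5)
The corollary follows almost directly from Theorem~\ref{thm:conj-representation}; the plan is to unpack that theorem and then use a one-line concavity argument for the derivative.

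\textbf{Step 1 (membership in the superdifferential).} Fix \(B\in\intr(\dom V)\) and \(\mu^*\in\mathcal D(B)\). Theorem~\ref{thm:conj-representation} gives \(\mathcal D(B)=-\partial^+V(B)\), so \(-\mu^*\in\partial^+V(B)\). Reading off the definition of \(\partial^+V(B)\) with \(p=-\mu^*\) gives exactly the stated inequality
\(V(\widetilde B)-V(B)\le-\mu^*(\widetilde B-B)\) for all \(\widetilde B\in\dom V\).
As a cross-check that does not invoke the full identity, the inequality also follows from weak duality. Since \(\mu^*\) is dual optimal and the dual representation holds, \(V(B)=\ell(\mu^*)-\mu^*B\). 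Weak duality at level \(\widetilde B\) gives \(V(\widetilde B)\le\ell(\mu^*)-\mu^*\widetilde B\). Subtracting the first relation from the second yields the inequality directly.

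\textbf{Step 2 (differentiability).} Assume \(V\) is differentiable at the interior point \(B\).
\begin{itemize}
\item Take \(\widetilde B=B+s\) with \(|s|\) small, which is allowed because \(B\) is interior.
\item Divide the inequality by \(s>0\) and let \(s\downarrow0\); this gives \(V'(B)\le-\mu^*\).
\item Divide by \(s<0\), which reverses the inequality, and let \(s\uparrow0\); this gives \(V'(B)\ge-\mu^*\).
\end{itemize}
Hence \(V'(B)=-\mu^*\). Equivalently, at a differentiability point the superdifferential of a concave function is the singleton \(\{V'(B)\}\), so \(\mathcal D(B)=\{-V'(B)\}\), consistent with Theorem~\ref{thm:conj-representation}.

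The only point needing care is the sign convention, specifically that the dual-optimal \(\mu^*\) maps to \(-\mu^*\) in the superdifferential. This is already settled in Theorem~\ref{thm:conj-representation}, so no genuine obstacle is expected.
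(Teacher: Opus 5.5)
Your proposal is correct and follows the paper's proof. You read off $-\mu^*\in\partial^+V(B)$ from Theorem~\ref{thm:conj-representation}, note that the inequality is just the definition of the superdifferential, and conclude $V'(B)=-\mu^*$ at a differentiability point; the paper cites $\partial^+V(B)=\{V'(B)\}$ for this last step, whereas you verify it directly with one-sided difference quotients. Your weak-duality cross-check is also valid, and it shows the inequality needs only the dual representation and dual attainment at $B$, not the full identity $\partial^+V(B)=-\mathcal D(B)$.
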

\begin{proof}
The inclusion \(-\mu^*\in\partial^+V(B)\) follows immediately from
    $\partial^+V(B)=-\mathcal D(B)$.
The displayed inequality is precisely the definition of the superdifferential.
If \(V\) is differentiable at \(B\), then
    $\partial^+V(B)=\{V'(B)\}$.
Hence
    $V'(B)=-\mu^*$.
\end{proof}
\begin{remark}[Sign of the multiplier]\label{rem:shadow-price}
The sign is determined by the Lagrangian convention
    $L(u,\mu;B)
    =
    J(u)+\mu(\mathcal C(u)-B)$.
Because the right-hand side \(B\) enters the Lagrangian through the term
\(-\mu B\), the value sensitivity is
$\frac{dV}{dB}=-\mu$
at differentiability points. Thus the economically meaningful shadow value of
the isoperimetric level is \(-\mu\), not \(\mu\).
\end{remark}

\subsection{Normality from two-sided attainability}
\label{subsec:normality-contradiction}
We now give the explicit scalar computation showing that two-sided
attainability of the isoperimetric level excludes the abnormal case
\(\lambda_0=0\). This is the point at which the constraint qualification enters
the multiplier theory.
Let the reduced problem be
    $\max\{\widehat J(u): u\in\mathcal U,\ \widehat G(u)=0\}$,
    $\widehat G(u):=\mathcal C(u)-B_0$.
Assume that the attainable set of constraint levels is the compact interval
    $\mathcal C(\mathcal U)
    =
    [\underline B,\overline B]$,
    $\underline B<\overline B$,
and that the prescribed level is an interior point:
    $B_0\in\operatorname{int}\bigl(\mathcal C(\mathcal U)\bigr)
    =
    (\underline B,\overline B)$.
Equivalently, there exist \(u^{-},u^{+}\in\mathcal U\) such that
\begin{equation}\label{eq:two-sided-controls}
    \mathcal C(u^{-})<B_0<\mathcal C(u^{+}),
\end{equation}
or, in terms of the reduced constraint map \(\widehat G=\mathcal C-B_0\),
    $\widehat G(u^{-})<0<\widehat G(u^{+})$.
Let \(u^*\) be an optimal feasible control, so
    $\widehat G(u^*)=0$,
    $\mathcal C(u^*)=B_0$.
The Fritz--John necessary condition for the reduced equality-constrained
problem gives multipliers
\[
    \lambda_0\ge0,
    \qquad
    \mu\in\mathbb R,
    \qquad
    (\lambda_0,\mu)\neq(0,0),
\]
such that \(u^*\) maximizes the John Lagrangian
    $\mathcal L_J(u)
    :=
    \lambda_0\widehat J(u)+\mu\widehat G(u)
    =
    \lambda_0\widehat J(u)+\mu\bigl(\mathcal C(u)-B_0\bigr)$
over \(\mathcal U\). Thus
\begin{equation}\label{eq:john-max}
    \mathcal L_J(u^*)\ge \mathcal L_J(u)
    \qquad
    \text{for all }u\in\mathcal U.
\end{equation}
We claim that \(\lambda_0>0\). Suppose, for contradiction, that
\begin{equation}\label{eq:abnormal-assumption}
    \lambda_0=0.
\end{equation}
Since \((\lambda_0,\mu)\neq(0,0)\), this implies
    $\mu\neq0$.
Under \eqref{eq:abnormal-assumption}, the John functional reduces to
    $\mathcal L_J(u)
    =
    \mu\widehat G(u)
    =
    \mu\bigl(\mathcal C(u)-B_0\bigr)$.
The maximizing condition \eqref{eq:john-max} therefore becomes
    $\mu\bigl(\mathcal C(u^*)-B_0\bigr)
    \ge
    \mu\bigl(\mathcal C(u)-B_0\bigr)$
    for all $u\in\mathcal U$.
Using feasibility \(\mathcal C(u^*)=B_0\), the left-hand side is zero:
    $\mu\bigl(\mathcal C(u^*)-B_0\bigr)=0$.
Hence
\begin{equation}\label{eq:abnormal-ineq}
    0
    \ge
    \mu\bigl(\mathcal C(u)-B_0\bigr)
    \qquad
    \text{for all }u\in\mathcal U.
\end{equation}
There are two cases.
\medskip
\noindent
\textbf{Case 1: \(\mu>0\).}
Dividing \eqref{eq:abnormal-ineq} by the positive number \(\mu\), we get
    $\mathcal C(u)\le B_0$
    for all $u\in\mathcal U$.
Therefore
    $\overline B
    :=
    \sup_{u\in\mathcal U}\mathcal C(u)
    \le
    B_0$.
But since \(u^*\) is feasible,
    $B_0=\mathcal C(u^*)\le \overline B$.
Thus
    $B_0=\overline B$.
This contradicts the interior condition \(B_0\in(\underline B,\overline B)\).
Equivalently, using the two-sided control \(u^+\) from
\eqref{eq:two-sided-controls}, we obtain directly
\[
    \mathcal C(u^+)-B_0>0,
\]
and therefore, since \(\mu>0\),
    $\mu\bigl(\mathcal C(u^+)-B_0\bigr)>0$,
which contradicts \eqref{eq:abnormal-ineq}.

\noindent
\textbf{Case 2: \(\mu<0\).}
Dividing \eqref{eq:abnormal-ineq} by the negative number \(\mu\) reverses the
inequality:
    $\mathcal C(u)\ge B_0$ for all $u\in\mathcal U$.
Therefore
    $\underline B
    :=
    \inf_{u\in\mathcal U}\mathcal C(u)
    \ge
    B_0$.
Since \(u^*\) is feasible,
    $\underline B\le \mathcal C(u^*)=B_0$.
Thus
    $B_0=\underline B$.
This again contradicts the interior condition
\(B_0\in(\underline B,\overline B)\).
Equivalently, using the two-sided control \(u^-\) from
\eqref{eq:two-sided-controls}, we have
    $\mathcal C(u^-)-B_0<0$,
and since \(\mu<0\),
    $\mu\bigl(\mathcal C(u^-)-B_0\bigr)>0$,
which contradicts \eqref{eq:abnormal-ineq}.

Both cases lead to contradictions. Therefore
    $\lambda_0>0$.
Normalizing the multipliers by \(\lambda_0\), we may set
    $\lambda_0=1$,
    $\widetilde\mu:=\frac{\mu}{\lambda_0}$.
Thus the multiplier system is normal, and the reduced Lagrangian can be
written in the normalized form
    $\widehat J(u)+\widetilde\mu\bigl(\mathcal C(u)-B_0\bigr)$.
Consequently, under two-sided attainability of the scalar isoperimetric
level, every Fritz--John multiplier system associated with an optimal feasible
control is normal.

\begin{proposition}[Two-sided attainability excludes abnormality]
\label{prop:twosided-normality}
Assume \(\mathcal C(\mathcal U)=[\underline B,\overline B]\) with
\(\underline B<\overline B\), and let
\(B_0\in(\underline B,\overline B)\). If \(u^*\) is an optimal feasible control
at level \(B_0\), then every Fritz--John multiplier system for the reduced
problem
    $\max\{\widehat J(u):u\in\mathcal U,\ \mathcal C(u)=B_0\}$
is normal; that is, its payoff multiplier satisfies \(\lambda_0>0\).
\end{proposition}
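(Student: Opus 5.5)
The plan is to argue by contradiction, following the scalar computation carried out just before the statement. The Fritz--John multiplier system in the form used there consists of $\lambda_0\ge0$ and $\mu\in\R$ with $(\lambda_0,\mu)\neq(0,0)$, such that $u^*$ maximizes the John Lagrangian $\mathcal L_J(u)=\lambda_0\widehat J(u)+\mu(\Cc(u)-B_0)$ over $\U$, as in \eqref{eq:john-max}. Suppose $\lambda_0=0$. Nontriviality then forces $\mu\neq0$, and \eqref{eq:john-max} reduces, after using feasibility $\Cc(u^*)=B_0$, to the inequality \eqref{eq:abnormal-ineq}: $\mu(\Cc(u)-B_0)\le 0$ for every $u\in\U$.

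Next, I use the interior hypothesis $B_0\in(\underline B,\overline B)$ with $\Cc(\U)=[\underline B,\overline B]$ to produce controls $u^\pm\in\U$ with $\Cc(u^-)<B_0<\Cc(u^+)$, as in \eqref{eq:two-sided-controls}. I then split on the sign of $\mu$.
\begin{itemize}
\item If $\mu>0$, testing \eqref{eq:abnormal-ineq} at $u^+$ gives $\mu(\Cc(u^+)-B_0)>0$, a contradiction.
\item If $\mu<0$, testing at $u^-$ gives $\mu(\Cc(u^-)-B_0)>0$, again a contradiction.
\end{itemize}
Hence $\lambda_0>0$. Dividing by $\lambda_0$ yields the normalized multiplier $\mu/\lambda_0$ and the Lagrangian $\widehat J(u)+\widetilde\mu(\Cc(u)-B_0)$.

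The only delicate point is the meaning of ``Fritz--John multiplier system.'' If it is taken in the global form above, where $u^*$ maximizes $\mathcal L_J$ over $\U$, the argument is immediate. If the intended version is merely a first-order (variational-inequality) condition, I would invoke the affine structure from Subsection~\ref{subsec:affine-constraint-reduction} instead. There $D\widehat G(u^*)v=\langle\gamma,v\rangle$, so with $\lambda_0=0$ the condition reads $\mu\langle\gamma,u-u^*\rangle\le0$ for all $u\in\U$. Since $\Cc(u)-B_0=\langle\gamma,u-u^*\rangle$, this is exactly \eqref{eq:abnormal-ineq}, and the same case analysis applies. I would check this reduction to be sure both interpretations are covered.
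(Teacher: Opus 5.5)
Your proposal is correct and follows the paper's argument essentially line for line. The paper also takes the Fritz--John system in the global form \eqref{eq:john-max}, derives $\mu\neq0$ and $0\ge\mu(\mathcal C(u)-B_0)$ on $\mathcal U$ from $\lambda_0=0$, and rules out $\mu>0$ and $\mu<0$ by testing at $u^+$ and $u^-$, also phrasing the contradiction as $\overline B\le B_0$ or $\underline B\ge B_0$. Your extra remark on the first-order reading is sound as well, since affinity gives $\mu\langle\gamma,u-u^*\rangle=\mu(\mathcal C(u)-B_0)$, so both readings reduce to the same inequality.
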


\subsection{Identifying dual multipliers with Pontryagin multipliers}

Theorem~\ref{thm:conj-representation} characterizes $\partial^{+}V$ through
\emph{dual} multipliers. To connect this to the \emph{Pontryagin} multiplier
$\mu$ of Theorem~\ref{thm:pmp-isoperimetric} requires one genuine ingredient: a
constraint qualification ensuring that optimal controls are normal extremals
whose isoperimetric multiplier is dual-optimal. This is the only place where a
hypothesis beyond convexity is used, and it is stated as such.

We treat this ingredient in full, because it is the pivot of the whole
sensitivity theory and because the phrase ``Slater excludes abnormality'' is
often asserted without proof. We work with the reduced problem obtained by
eliminating the state through the affine control-to-state map of
Lemma~\ref{lem:affine-map}: on the real Hilbert space $X=L^2(0,T;\Rm)$, the
admissible set $\U\subset X$ is convex, bounded, and weak-$*$ (equivalently,
weakly) closed; the reduced payoff $\hat J(u):=J(x_u,u)$ is concave and weakly
upper semicontinuous; and the reduced constraint
  $\hat G(u):=\Cc(u)-B_0=\ip{\gamma}{u}+\delta-B_0$
is a continuous \emph{affine} functional, for some $\gamma\in X$ and
$\delta\in\R$ determined by the data $a,b,c$ of
Assumption~\ref{ass:structured}. We record the two facts that make the
constraint qualification transparent.

\begin{lemma}[Affine attainable interval and the regularity condition]
\label{lem:cq-geometry}
Under Assumption~\ref{ass:structured}, $\Cc(\U)=\dom V$ is a compact interval
$[\underline B,\overline B]$, and $\hat G$ is nonconstant on $\U$ whenever
$\underline B<\overline B$. Moreover, for $B_0\in\R$ the following are
equivalent:
\begin{enumerate}[label=\textup{(\roman*)}]
  \item $B_0\in\intr(\dom V)=(\underline B,\overline B)$ \textup{(Assumption~\ref{ass:cq})};
  \item $0\in\intr\bigl(\hat G(\U)\bigr)$, i.e.\ there exist
  $u^-,u^+\in\U$ with $\hat G(u^-)<0<\hat G(u^+)$;
  \item the Robinson--Zowe--Kurcyusz regularity condition holds at every
  feasible point: $0\in\intr\bigl(\hat G'(u^*)(\U-u^*)\bigr)$, where
  $\hat G'(u^*)=\ip{\gamma}{\cdot}$ is the \textup{(}constant\textup{)} derivative
  of the affine map $\hat G$.
\end{enumerate}
\end{lemma}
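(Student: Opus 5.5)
The plan is to use the affine representation $\Cc(u)=\delta+\ip{\gamma}{u}$ from Subsection~\ref{subsec:affine-constraint-reduction}, together with weak-$*$ compactness of $\U$. The main steps, in order, are compactness of the attainable set, nonconstancy of $\hat G$, and then the cycle of equivalences (i)$\Rightarrow$(ii)$\Rightarrow$(iii)$\Rightarrow$(i).

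\emph{Compact interval.} Since $U$ is compact and convex, $\U$ is convex. It is bounded in $L^\infty$, and by Banach--Alaoglu it is weak-$*$ compact; this uses the closedness of the pointwise constraint already invoked in Theorem~\ref{thm:existence}. By Lemma~\ref{lem:affine-map}, $\Cc$ is affine and weak-$*$ continuous on $\U$. Hence $\Cc(\U)$ is a convex and compact subset of $\R$, that is, a compact interval $[\underline B,\overline B]$. It equals $\dom V$ by the definition of $V$.

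\emph{Nonconstancy.} If $\underline B<\overline B$, pick $u_1,u_2\in\U$ with $\Cc(u_1)=\underline B$ and $\Cc(u_2)=\overline B$; these exist because the extrema are attained. Then $\hat G(u_1)\ne\hat G(u_2)$, so $\hat G$ is nonconstant on $\U$.

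\emph{The equivalences.}
\begin{itemize}
\item (i)$\Leftrightarrow$(ii) is immediate, because $\hat G(\U)=\Cc(\U)-B_0=[\underline B-B_0,\overline B-B_0]$. Its interior contains $0$ exactly when $\underline B<B_0<\overline B$. Any $u^\pm$ realizing values strictly on either side of $B_0$ give (ii), and conversely.
\item (ii)$\Rightarrow$(iii): let $u^*$ be feasible, so $\hat G(u^*)=0$. Because $\hat G$ is affine, $\hat G'(u^*)(u-u^*)=\ip{\gamma}{u-u^*}=\hat G(u)-\hat G(u^*)=\hat G(u)$. Therefore $\hat G'(u^*)(\U-u^*)=\hat G(\U)$, and (iii) reduces to (ii).
\item (iii)$\Rightarrow$(i): if (iii) holds at some feasible point, the same identity gives $0\in\intr\hat G(\U)$, which is (i).
\end{itemize}
The phrase ``at every feasible point'' needs one caveat. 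When no feasible point exists, (iii) is vacuous, while (i) fails. So I read (iii) as presupposing $B_0\in\dom V$. This is automatic under (i), and I will state it explicitly in the proof.

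The main obstacle is the compactness step. The image is an interval by affinity alone, but closedness of $\Cc(\U)$ requires weak-$*$ compactness of $\U$ in $L^\infty$. Specifically, it requires that the set $\{u:u(t)\in U\text{ a.e.}\}$ be weak-$*$ closed. I would justify this by separation: $U$ is an intersection of half-spaces $\{v:\xi\tp v\le\alpha\}$, and each constraint $\int_E\xi\tp u\le\alpha|E|$ for measurable $E$ is preserved under weak-$*$ limits. The weak-$*$ continuity of $\Cc$ rests on $\gamma\in L^\infty\subset L^1$, which holds because $a,b,B,\Phi$ are bounded.
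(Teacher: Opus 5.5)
Your proof is correct and follows the paper's argument essentially step for step: weak compactness of $\U$ plus affine continuity of $\Cc$ gives the compact interval, the translation $\hat G(\U)=\Cc(\U)-B_0$ gives (i)$\Leftrightarrow$(ii), and the identity $\hat G'(u^*)(\U-u^*)=\hat G(\U)$ at a feasible $u^*$ gives the equivalence with (iii). You also add two sound refinements of points the paper leaves implicit: the separation argument for weak-$*$ closedness of $\U$, and the caveat that (iii) must presuppose $B_0\in\dom V$, since otherwise it holds vacuously while (i) fails.
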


\begin{proof}
Since $\Cc$ is affine and weakly continuous (Lemma~\ref{lem:affine-map}) and
$\U$ is convex and weakly compact, $\Cc(\U)$ is a compact convex subset of
$\R$, i.e.\ a compact interval $[\underline B,\overline B]$; that it equals
$\dom V$ is the definition of the domain in
\eqref{eq:constrained-value}. If
$\underline B<\overline B$ then $\Cc$ takes two distinct values on $\U$, so
$\hat G=\Cc-B_0$ is nonconstant and $\gamma\neq0$. The equivalence of (i) and
(ii) is immediate from $\hat G(\U)=\Cc(\U)-B_0=[\underline B-B_0,
\overline B-B_0]$. For (ii)$\Leftrightarrow$(iii), note
$\hat G'(u^*)(\U-u^*)=\{\ip{\gamma}{u-u^*}:u\in\U\}=\Cc(\U)-\Cc(u^*)
=[\underline B-B_0,\overline B-B_0]$ because $\hat G(u^*)=\Cc(u^*)-B_0=0$;
hence $0$ is interior to this image exactly when (ii) holds. This is precisely
Robinson's regularity condition specialized to a scalar affine constraint,
which for such problems coincides with the Zowe--Kurcyusz constraint
qualification.
\end{proof}

\begin{proposition}[Slater implies normal, dual-optimal multipliers]
\label{prop:pmp-is-dual}
Let Assumptions~\ref{ass:structured} and~\ref{ass:cq} hold at
$B_0\in\intr(\dom V)$, and let $u^*$ be optimal for $V(B_0)$. Then:
\begin{enumerate}[label=\textup{(\alph*)}]
  \item $u^*$ is a \emph{normal} extremal: in the Fritz--John form of the
  necessary conditions there is no admissible multiplier with vanishing payoff
  coefficient, so the payoff coefficient may be normalized to $\lambda_0=1$;
  \item the constant isoperimetric multiplier $\mu$ of
  Theorem~\ref{thm:pmp-isoperimetric} associated with $u^*$ is a saddle-point
  multiplier and lies in $\mathcal D(B_0)$;
  \item conversely, every $\bar\mu\in\mathcal D(B_0)$ is the Pontryagin
  multiplier of some optimal control at level $B_0$.
\end{enumerate}
\end{proposition}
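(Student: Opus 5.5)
The plan is to work entirely in the reduced convex problem on $X=L^2(0,T;\Rm)$, where $\hat J$ is concave and $\hat G$ is affine. Everything then follows from finite-dimensional (scalar) Lagrangian duality plus Mangasarian sufficiency (Proposition~\ref{prop:mangasarian}), applied to the augmented Hamiltonian.

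For (a), I would first produce a Fritz--John multiplier by separation rather than citing a general principle. Consider the set
\[
K=\{(\hat J(u)-V(B_0)+r,\ \hat G(u)) : u\in\U,\ r\le 0\}\subset\R^2 .
\]
This set is convex, since $\hat J$ is concave, $\hat G$ is affine and $\U$ is convex. By optimality of $u^*$ it does not meet the open ray $\{(s,0):s>0\}$. A separating pair $(\lambda_0,\mu)\neq(0,0)$ with $\lambda_0\ge0$ then satisfies $\lambda_0\hat J(u)+\mu\hat G(u)\le\lambda_0V(B_0)$ for all $u\in\U$. Taking $u=u^*$ gives equality, which is exactly \eqref{eq:john-max}. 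Proposition~\ref{prop:twosided-normality}, whose hypotheses hold by Lemma~\ref{lem:cq-geometry}(i)$\Leftrightarrow$(ii), then forces $\lambda_0>0$, and we normalize to $\lambda_0=1$.

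For (b), the normalized inequality reads $\hat J(u)+\mu\,\Cc(u)\le V(B_0)+\mu B_0$ for every $u\in\U$. Taking the supremum over $u$ gives $\ell(\mu)-\mu B_0\le V(B_0)$. Weak duality (the remark after Theorem~\ref{thm:struct-duality}) gives the reverse inequality, so $\mu\in\mathcal D(B_0)$. Also $u^*$ maximizes $J+\mu\Cc$ over $\U$, so $(u^*,\mu)$ is a saddle point.

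The real obstacle is showing that \emph{this} $\mu$ coincides with the constant multiplier of Theorem~\ref{thm:pmp-isoperimetric}, which might a priori be non-unique. My first approach is to show that any normal Pontryagin pair $(\lambda,\mu)$ for $u^*$ is dual-optimal. Fix $\mu$ and consider the unconstrained problem $\max_{\U}\{J(u)+\mu\Cc(u)\}$. Its Hamiltonian is $\Ht(\cdot,\lambda,\mu)$, which is concave in $(x,u)$ because $f$ is concave, $g$ is linear and $h$ is affine. Since $\phi$ is concave, Proposition~\ref{prop:mangasarian}, applied with running payoff $f+\mu h$, shows that $u^*$ is a global maximizer of $J+\mu\Cc$. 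Hence $\ell(\mu)=J(u^*)+\mu B_0=V(B_0)+\mu B_0$, and $\mu\in\mathcal D(B_0)$. The conditions \eqref{eq:iso-pmp-adjoint}--\eqref{eq:iso-pmp-terminal} are precisely the PMP system for this augmented payoff. If Mangasarian needs $C^1$ data that $f$ lacks, I would fall back on the separation multiplier from (a) and verify the PMP conditions directly: since $u^*$ maximizes the concave functional $J+\mu\Cc$ over $\U$, a needle/convex-variation argument yields \eqref{eq:iso-pmp-max} with the adjoint defined by \eqref{eq:iso-pmp-adjoint}--\eqref{eq:iso-pmp-terminal}.

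For (c), take $\bar\mu\in\mathcal D(B_0)$. Then $\ell(\bar\mu)=V(B_0)+\bar\mu B_0=J(u^*)+\bar\mu\,\Cc(u^*)$, so the optimal $u^*$, which exists by Theorem~\ref{thm:existence} and Theorem~\ref{thm:struct-concave}, maximizes $J+\bar\mu\Cc$ over $\U$ without constraint. Now apply Theorem~\ref{thm:pmp-normal} to this unconstrained problem, with running payoff $f+\bar\mu h$ and the same dynamics. It is normal because there is no endpoint constraint. This gives $\lambda$ satisfying \eqref{eq:iso-pmp-adjoint}--\eqref{eq:iso-pmp-terminal} with multiplier $\bar\mu$, and \eqref{eq:iso-constraint-satisfied} holds by feasibility. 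So $\bar\mu$ is a Pontryagin multiplier of the optimal control $u^*$.
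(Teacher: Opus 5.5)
Your proposal is correct and follows the paper's overall skeleton: a Fritz--John multiplier, normality via Proposition~\ref{prop:twosided-normality}, dual optimality from $\hat G(u^*)=0$, and then the normal PMP for the free-endpoint problem with running payoff $f+\mu h$. It departs from the paper in three useful ways.

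In (a) you obtain the John multiplier by separating the convex set $K\subset\R^2$ from the open ray, where the paper simply cites the John rule. Your version is self-contained, and it is essentially the standard proof of that rule in the scalar case.

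In (b) the paper's Step~2 goes in the other direction. It shows that the normalized Fritz--John multiplier is dual-optimal and is realized as a Pontryagin multiplier. It does not argue that an \emph{arbitrary} multiplier furnished by Theorem~\ref{thm:pmp-isoperimetric} is dual-optimal. Your Mangasarian step supplies exactly that inclusion, $\mathcal M(B_0)\subset\mathcal D(B_0)$, which Theorem~\ref{thm:pmp-representation} relies on. The step works because $\Ht$ is concave in $(x,u)$ (concave $f$, linear $g$, affine $h$), $\phi$ is concave, and the $C^1$ data of Assumption~\ref{ass:data} are in force whenever Theorem~\ref{thm:pmp-isoperimetric} is used. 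So your needle-variation fallback is not needed.

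In (c) the paper uses Danskin's formula for $\partial\ell(\bar\mu)$ and convexity of $\mathcal A(\bar\mu)$ to build an optimal $\bar u$ from $\bar\mu$ alone. You instead note that zero duality gap gives $J(u^*)+\bar\mu\,\Cc(u^*)=\ell(\bar\mu)$, so the given $u^*$ already maximizes the $\bar\mu$-Lagrangian. Your route is shorter and stronger: each $\bar\mu\in\mathcal D(B_0)$ is then a Pontryagin multiplier of \emph{every} optimal control, not just some. The paper's route has the merit of not presupposing an optimal control.

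Do state explicitly where the zero gap $\inf_\mu\{\ell(\mu)-\mu B_0\}=V(B_0)$ comes from. Your separation multiplier attains $V(B_0)$, and weak duality bounds the infimum below by $V(B_0)$; alternatively, cite Theorem~\ref{thm:struct-duality}.
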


\begin{proof}
\emph{Step 1: Fritz--John conditions for the reduced convex problem.}
The reduced problem is
$\max\{\hat J(u):u\in\U,\ \hat G(u)=0\}$ with $\hat J$ concave and $\hat G$
affine continuous on $X$. By the John multiplier rule for extremal problems
with a convex admissible set \cite{yu2026endogenous,ioe1979theory,zowe1979regularity,wang2026optimal,yu2026fredholm,kurcyusz1976existence},
there exist
a scalar $\lambda_0\ge0$ and a multiplier $\mu\in\R$, not both zero, such that
$u^*$ maximizes the John--Lagrangian
\begin{equation}\label{eq:john-lagrangian}
  u\longmapsto \lambda_0\,\hat J(u)+\mu\,\hat G(u)
  \qquad\text{over }u\in\U .
\end{equation}
(When $\lambda_0\ge0$ and $\hat G$ is affine the map \eqref{eq:john-lagrangian}
is concave, so ``$u^*$ is a maximizer'' is equivalent to the first-order
condition $0\in\lambda_0\,\partial^{+}\hat J(u^*)+\mu\,\gamma+N_\U(u^*)$, with
$N_\U$ the normal cone.)

\emph{Step 2: the normalized multiplier is dual-optimal.} By Proposition~\ref{prop:twosided-normality}, we may assume without loss of generality $\lambda_0=1$,
$u^*$ maximizes $u\mapsto\hat J(u)+\mu\,\hat G(u)=J(u)+\mu(\Cc(u)-B_0)$ over
$\U$. Since $\hat G(u^*)=0$,
\[
  \ell(\mu)=\sup_{u\in\U}\{J(u)+\mu\Cc(u)\}
  =J(u^*)+\mu\Cc(u^*)=V(B_0)+\mu B_0 ,
\]
so $\ell(\mu)-\mu B_0=V(B_0)$, i.e.\ $\mu$ attains the outer infimum in the
dual representation \eqref{eq:dual-representation}: $\mu\in\mathcal D(B_0)$.
Applying the normal-form PMP to the unconstrained Lagrangian problem
\[
    \sup_{u\in\U}\{J(u)+\mu\Cc(u)\}
\]
gives the adjoint equation and pointwise maximum condition for the augmented
Hamiltonian $\Ht$. Thus the dual multiplier $\mu$ is realized as
the constant isoperimetric multiplier in the augmented Pontryagin system. This proves (a) and (b).

\emph{Step 3: converse.}
Let \(\bar\mu\in\mathcal D(B_0)\). Then
    $\ell(\bar\mu)-\bar\mu B_0=V(B_0)$.
Let
    $\mathcal A(\bar\mu)
    :=
    \argmax_{u\in\mathcal U}
    \{J(u)+\bar\mu \mathcal C(u)\}$.
By weak-\(*\) compactness and upper semicontinuity, \(\mathcal A(\bar\mu)\) is
nonempty. Since \(J+\bar\mu\mathcal C\) is concave and \(\mathcal U\) is
convex, \(\mathcal A(\bar\mu)\) is convex. Moreover, by the
superdifferential relation
    $B_0\in\partial\ell(\bar\mu)$,
and the one-dimensional Danskin theorem,
    $\partial\ell(\bar\mu)
    =
    \operatorname{co}\{\mathcal C(u):u\in\mathcal A(\bar\mu)\}$.
Hence there exist \(u_1,u_2\in\mathcal A(\bar\mu)\) and
\(\theta\in[0,1]\) such that
    $B_0=(1-\theta)\mathcal C(u_1)+\theta\mathcal C(u_2)$.
Set
    $\bar u=(1-\theta)u_1+\theta u_2$.
Since \(\mathcal A(\bar\mu)\) is convex, \(\bar u\in\mathcal A(\bar\mu)\).
Since \(\mathcal C\) is affine,
    $\mathcal C(\bar u)=B_0$.
Therefore
    $J(\bar u)+\bar\mu B_0=\ell(\bar\mu)$.
Using \(\ell(\bar\mu)-\bar\mu B_0=V(B_0)\), we get
    $J(\bar u)=V(B_0)$.
Thus \(\bar u\) is feasible and optimal at level \(B_0\), and the multiplier
\(\bar\mu\) is realized by a normal Pontryagin extremal.
\end{proof}

\begin{definition}[Pontryagin multiplier set]\label{def:pmp-set}
For $B\in\intr(\dom V)$ let $\mathcal M(B)$ be the set of constant multipliers
$\mu$ arising in Theorem~\ref{thm:pmp-isoperimetric} for optimal controls at
level $B$.
\end{definition}

\begin{theorem}[Full superdifferential representation]\label{thm:pmp-representation}
Under Assumptions~\ref{ass:structured} and~\ref{ass:cq}, for every
$B\in\intr(\dom V)$,
\begin{equation}\label{eq:pmp-representation}
  \partial^{+}V(B)=-\mathcal M(B)=-\mathcal D(B).
\end{equation}
In particular, at every differentiability point of $V$, the isoperimetric
Pontryagin multiplier is unique and $V'(B)=-\mu$.
\end{theorem}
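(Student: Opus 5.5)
The plan is to assemble \eqref{eq:pmp-representation} from results already in place. The first equality will come from Theorem~\ref{thm:conj-representation}, and the second from Proposition~\ref{prop:pmp-is-dual}. Fix $B\in\intr(\dom V)$.

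\emph{Step 1: hypotheses of Theorem~\ref{thm:conj-representation}.} By Theorem~\ref{thm:struct-concave}, $V$ is concave and upper semicontinuous on the interval $\dom V$. Lemma~\ref{lem:cq-geometry} shows that $\dom V=[\underline B,\overline B]$ is compact. For each feasible level, the argument of Theorem~\ref{thm:existence} yields a maximizer, since $\U\cap\{\Cc=B\}$ is weak-$*$ compact and $J$ is weak-$*$ upper semicontinuous. Hence $V$ is finite on $\dom V$ and proper. Extending $V$ by $-\infty$ outside a closed interval keeps it upper semicontinuous, so Theorem~\ref{thm:struct-duality} gives the dual representation \eqref{eq:dual-representation} at $B$.

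The remaining hypothesis is attainment of the dual infimum. Since $V$ is finite and concave on the open interval $\intr(\dom V)$, its one-sided derivatives at $B$ are finite, so $\partial^+V(B)\neq\emptyset$. Take $p\in\partial^+V(B)$. The Fenchel--Young computation in the second half of the proof of Theorem~\ref{thm:conj-representation} uses only $W^*=\ell$, not attainment, and it gives $-p\in\mathcal D(B)$. So $\mathcal D(B)\neq\emptyset$, and Theorem~\ref{thm:conj-representation} then yields $\partial^+V(B)=-\mathcal D(B)$.

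\emph{Step 2: $\mathcal M(B)=\mathcal D(B)$.} Assumption~\ref{ass:cq} holds at every interior level, so Proposition~\ref{prop:pmp-is-dual} applies with $B_0=B$. Part (b) gives $\mathcal M(B)\subset\mathcal D(B)$. Part (c) gives the reverse inclusion: for $\bar\mu\in\mathcal D(B)$, it produces an optimal $\bar u$ at level $B$ that maximizes $J+\bar\mu\,\Cc$ over $\U$.

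This reverse inclusion is the point I expect to need the most care. Definition~\ref{def:pmp-set} requires $\bar\mu$ to be a multiplier \emph{in Theorem~\ref{thm:pmp-isoperimetric}} for $\bar u$. I would apply the normal PMP (Theorem~\ref{thm:pmp-normal}) to the unconstrained problem with running payoff $f+\bar\mu h$ at its maximizer $\bar u$. This produces $\lambda$ satisfying \eqref{eq:iso-pmp-adjoint}--\eqref{eq:iso-pmp-terminal} with $\mu=\bar\mu$, and feasibility \eqref{eq:iso-constraint-satisfied} holds because $\Cc(\bar u)=B$. Normality is supplied by Proposition~\ref{prop:twosided-normality}. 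One subtlety remains. For the inclusion $\mathcal M(B)\subset\mathcal D(B)$, a given optimal control might admit several Pontryagin multipliers. I would check that each of them makes $u^*$ maximize the Lagrangian. The reason is that under Assumption~\ref{ass:structured} the augmented Hamiltonian is concave in $(x,u)$, so Proposition~\ref{prop:mangasarian} shows $u^*$ maximizes $J+\mu\,\Cc$. Step 2 of the proof of Proposition~\ref{prop:pmp-is-dual} then gives $\mu\in\mathcal D(B)$.

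\emph{Step 3: differentiability.} Suppose $V$ is differentiable at $B$. Then $\partial^+V(B)=\{V'(B)\}$ for a concave function at an interior point. Hence $\mathcal M(B)=\mathcal D(B)=\{-V'(B)\}$, so the multiplier is unique and $V'(B)=-\mu$. Since a concave function is differentiable almost everywhere, this holds for a.e.\ $B\in\intr(\dom V)$.
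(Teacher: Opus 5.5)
Your proof is correct and takes the paper's route: the paper's proof simply combines Proposition~\ref{prop:pmp-is-dual} (which gives $\mathcal M(B)=\mathcal D(B)$) with Theorem~\ref{thm:conj-representation} (which gives $\partial^+V(B)=-\mathcal D(B)$), and then reads off the differentiable case. Your additions fill in points the paper's proof leaves implicit, and they are sound: you check properness, upper semicontinuity of the $-\infty$ extension, and dual attainment via nonemptiness of $\partial^+V(B)$; and you use Proposition~\ref{prop:mangasarian} to show that \emph{every} Pontryagin multiplier of an optimal control is dual-optimal, not only the one built from the John rule.
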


\begin{proof}
By Proposition~\ref{prop:pmp-is-dual}, $\mathcal M(B)=\mathcal D(B)$. Combining
with Theorem~\ref{thm:conj-representation} gives
$\partial^{+}V(B)=-\mathcal D(B)=-\mathcal M(B)$. Differentiability forces both
sets to be singletons and yields $V'(B)=-\mu$.
\end{proof}

\begin{remark}[What is free and what is not]\label{rem:free-vs-hard}
Theorem~\ref{thm:conj-representation} is a convex-analytic identity requiring no
assumption beyond concavity; it is the ``free'' half. The content of
Theorem~\ref{thm:pmp-representation} beyond it is
Proposition~\ref{prop:pmp-is-dual}, i.e.\ the identification
$\mathcal M(B)=\mathcal D(B)$, which is a genuine constraint-qualification/normality
statement. Naive treatments blur these two steps into a single
``multiplier-representation assumption''; separating them is what makes the
representation \eqref{eq:pmp-representation} a theorem with an honest hypothesis.
\end{remark}

\section{Linear-Quadratic Isoperimetric Problems}
\label{sec:lq}

We now specialize to the linear-quadratic case with a single \emph{quadratic}
equality constraint. This is outside the affine-constraint structured class of
Section~\ref{sec:sensitivity}, and the feasible set $\{u:\Cc(u)=\beta\}$ is a
(nonconvex) quadric. Nevertheless strong duality holds, by a different
hidden-convexity mechanism: the Dines--Brickman--Polyak convexity of the joint
range of two quadratic functionals. Remarkably, Polyak's definiteness condition
coincides with the regularity matrix $R+2\mu D\succ0$ of the modified Riccati
equation. We formulate the problem as a minimization, so extremality is written
in minimization form.

In this subsection we work on a finite-dimensional control space
\(\mathcal H_N\subset L^2(0,T;\mathbb R^m)\), for example a Galerkin or
piecewise-polynomial discretization. The operators \(\mathcal R\) and
\(\mathcal D\) are then symmetric matrices.

\subsection{Regular constrained LQ problem}

Consider
\begin{equation}\label{eq:lq-iso-problem}
\begin{aligned}
  &\text{minimize} &&
  J_{\rm LQ}(u):=\tfrac12 x(T)\tp Mx(T)
  +\tfrac12\!\int_0^T\!\bigl[x\tp Q(t)x+u\tp R(t)u\bigr]dt\\
  &\text{subject to} &&
  \dot x=A(t)x+B(t)u,\qquad x(0)=x_0,\qquad
  \int_0^T u\tp D(t)u\,dt=\beta,
\end{aligned}
\end{equation}
with
  $M=M\tp\succeq0$, $Q(t)=Q(t)\tp\succeq0$, 
  $R(t)=R(t)\tp\succeq\rho I\succ0$, $D(t)=D(t)\tp\succeq d I\succ0,$
for constants $\rho,d>0$, and all coefficients measurable and bounded. Set
$\Cc(u):=\int_0^T u\tp D(t)u\,dt$.

The augmented Hamiltonian (minimization form) is
\[
  \Ht(t,x,u,\lambda,\mu)
  =\tfrac12 x\tp Q x+\tfrac12 u\tp R u+\lambda\tp(Ax+Bu)+\mu\,u\tp D u,
\]
with stationarity $\nabla_u\Ht=(R+2\mu D)u+B\tp\lambda=0$. Hence, if
\begin{equation}\label{eq:regularity-matrix}
  R(t)+2\mu D(t)\succ0\qquad\text{a.e. }t,
\end{equation}
the extremal control is
\begin{equation}\label{eq:lq-control-mu}
  u_\mu(t)=-\bigl(R(t)+2\mu D(t)\bigr)^{-1}B(t)\tp\lambda(t).
\end{equation}
With adjoint $\dot\lambda=-Qx-A\tp\lambda$, $\lambda(T)=Mx(T)$, and the ansatz
$\lambda=S_\mu x$, one obtains the modified Riccati equation
\begin{equation}\label{eq:lq-modified-riccati}
\begin{aligned}
  -\dot S_\mu&=A\tp S_\mu+S_\mu A
  -S_\mu B(R+2\mu D)^{-1}B\tp S_\mu+Q,\qquad S_\mu(T)=M,
\end{aligned}
\end{equation}
and the feedback law
$u_\mu(t)=-(R+2\mu D)^{-1}B\tp S_\mu x$. The multiplier is fixed by
  $\int_0^T u_\mu\tp D u_\mu\,dt=\beta$.

\begin{remark}[Regularity of the modified control weight]\label{rem:lq-regularity}
Condition \eqref{eq:regularity-matrix} is not automatic for an equality-type
quadratic constraint. It guarantees that stationarity can be solved uniquely for
$u$ and that \eqref{eq:lq-modified-riccati} is well posed. As we now show, it is
also exactly the hidden-convexity certificate; and it may fail at the optimal
multiplier even when strong duality holds.
\end{remark}

\subsection{Hidden convexity via the joint numerical range}

Throughout this subsection $\mathcal H:=L^2(0,T;\Rm)$, with inner product
$\ip{\cdot}{\cdot}$ and norm $\|\cdot\|$.

\begin{lemma}[Reduction to a Hilbert-space QCQP]\label{lem:lq-reduction}
Under the standing definiteness ($M,Q\succeq0$, $R(t)\succeq\rho I$,
$D(t)\succeq dI$, all bounded), eliminating the state through the affine map
$u\mapsto x_u$ of Lemma~\ref{lem:affine-map} reduces \eqref{eq:lq-iso-problem} to
\[
  \min_{u\in\mathcal H}\ f(u):=\tfrac12\ip{u}{\mathcal R u}+\ip{\rho_0}{u}+\gamma_0
  \qquad\text{s.t.}\qquad \Cc(u):=\ip{u}{\mathcal D u}=\beta,
\]
where $\mathcal D$ is multiplication by $D(t)$ \textup{(}so
$\Cc(u)=\int_0^T u\tp D u\,dt$ and $\nabla\Cc(u)=2\mathcal D u$\textup{)}, and
$\mathcal R=\mathcal R^{\ast}$ is the reduced-cost operator. The operators are
bounded and satisfy
\begin{equation}\label{eq:R-coercive}
  \rho\,I\preceq\mathcal R\preceq\bar r\,I,
  \qquad
  d\,I\preceq\mathcal D\preceq\bar d\,I ,
\end{equation}
for constants $0<\rho\le\bar r$ and $0<d\le\bar d$; in particular both are
coercive.
\end{lemma}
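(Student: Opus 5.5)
The plan is to write the control-to-state map as $x_u=\xi+\mathcal L u$, where $\xi(t)=\Phi(t,0)x_0$ and $(\mathcal L u)(t)=\int_0^t\Phi(t,s)B(s)u(s)\,ds$. Lemma~\ref{lem:affine-map} and \eqref{eq:variation-of-constants} give this representation. I regard $\mathcal L$ as a bounded operator from $\mathcal H$ into $L^2(0,T;\Rn)$, and I also use the terminal evaluation $\mathcal L_T u:=(\mathcal L u)(T)=\int_0^T\Phi(T,s)B(s)u(s)\,ds$, a bounded operator $\mathcal H\to\Rn$. Boundedness of both follows from boundedness of $\Phi$ on $[0,T]^2$ and of $B$, via Cauchy--Schwarz. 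It gives $\|\mathcal L u\|_{L^2}\le T\,\|\Phi B\|_\infty\|u\|$ and $|\mathcal L_T u|\le \sqrt T\,\|\Phi B\|_\infty\|u\|$.

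Substituting this into $J_{\rm LQ}$ and expanding each quadratic term, I collect everything into the stated form. The quadratic part is
\[
\mathcal R:=\mathcal L_T^{\ast}M\mathcal L_T+\mathcal L^{\ast}\mathcal Q\mathcal L+\mathcal R_0 ,
\]
where $\mathcal Q$ and $\mathcal R_0$ are the multiplication operators by $Q(t)$ and $R(t)$. The linear part is $\rho_0:=\mathcal L_T^{\ast}M\xi(T)+\mathcal L^{\ast}\mathcal Q\xi$. The constant is $\gamma_0:=\tfrac12\xi(T)\tp M\xi(T)+\tfrac12\int_0^T\xi\tp Q\xi\,dt$. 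Self-adjointness of $\mathcal R$ is immediate from the symmetry of $M$, $Q$ and $R$. The constraint $\int_0^Tu\tp Du\,dt$ involves no state, so it is exactly $\ip{u}{\mathcal D u}$ with $\mathcal D$ the multiplication by $D(t)$. Its gradient $2\mathcal D u$ follows from symmetry of $D(t)$ by the usual expansion $\Cc(u+v)=\Cc(u)+2\ip{\mathcal D u}{v}+\Cc(v)$.

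For the bounds in \eqref{eq:R-coercive}, the lower bound on $\mathcal R$ uses only positivity. The two composite terms satisfy $\ip{u}{\mathcal L_T^{\ast}M\mathcal L_Tu}=(\mathcal L_Tu)\tp M(\mathcal L_Tu)\ge0$ and likewise $\ip{u}{\mathcal L^{\ast}\mathcal Q\mathcal L u}\ge0$, since $M,Q\succeq0$. The remaining term gives $\ip{u}{\mathcal R_0u}\ge\rho\|u\|^2$ because $R(t)\succeq\rho I$ pointwise. For the upper bound I take
\[
\bar r:=\|M\|\,\|\mathcal L_T\|^2+\|Q\|_\infty\|\mathcal L\|^2+\|R\|_\infty .
\]
For $\mathcal D$, the pointwise bounds $dI\preceq D(t)\preceq\|D\|_\infty I$ integrate directly, with $\bar d:=\|D\|_\infty$. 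Coercivity is then the lower bound.

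I do not expect a real obstacle here; the argument is bookkeeping. The only point needing care is confirming that $\mathcal L$ and $\mathcal L_T$ are bounded on $L^2$ and not merely weak-$*$ continuous on $L^\infty$. I would justify this via the bounded kernel and Cauchy--Schwarz as above.
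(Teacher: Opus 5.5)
Your proposal is correct and follows the same route as the paper. You eliminate the state through the affine map $u\mapsto x_u$, collect the quadratic, linear and constant terms, obtain $\mathcal R\succeq\rho I$ from $M,Q\succeq0$ and $R(t)\succeq\rho I$, and read off the upper bounds from boundedness of the data. Your version is more explicit than the paper's, which lumps $Q$ and $M$ into a single operator $\Xi^{\ast}\mathcal Q\,\Xi$ and only asserts that $\Xi$ is bounded on $\mathcal H$. Your Cauchy--Schwarz check that the control-to-state map is bounded on $L^2$, and not merely weak-$*$ continuous on $L^\infty$, fills in a step the paper leaves unstated.
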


\begin{proof}
Write $x_u=\Xi u+\xi_0$ with $\Xi:\mathcal H\to C([0,T];\Rn)$ bounded linear and
$\xi_0$ the free response (Lemma~\ref{lem:affine-map}). Substituting into
$J_{\rm LQ}$ gives a quadratic functional
$f(u)=\tfrac12\ip{u}{\mathcal R u}+\ip{\rho_0}{u}+\gamma_0$ with
  $\mathcal R=(R\text{-mult})+\Xi^{\ast}\mathcal Q\,\Xi$,
  $\mathcal Q$ encoding $Q(\cdot)$ and $M\succeq0$.
Since $Q,M\succeq0$, the operator $\Xi^{\ast}\mathcal Q\Xi\succeq0$; since
$R(t)\succeq\rho I$, multiplication by $R$ is $\succeq\rho I$. Hence
$\mathcal R\succeq\rho I$. Boundedness of all data gives the upper bounds. The
statements for $\mathcal D$ follow from $dI\preceq D(t)\preceq\bar dI$.
\end{proof}

We give a self-contained proof of strong duality that does not appeal to the
general Hilbert-space convexity theorem for quadratic maps, but instead
constructs the optimal multiplier through the classical \emph{secular function}
of the trust-region literature. This has three advantages: it is elementary; it
produces the modified control weight $\mathcal R+2\mu\mathcal D$ intrinsically;
and it exhibits the degenerate ``hard case'' as the boundary of the admissible
multiplier interval.

Let \(\mathcal H\) be a real Hilbert space with inner product
\(\langle\cdot,\cdot\rangle\). Let
    $\mathcal R=\mathcal R^*$,
    $\mathcal D=\mathcal D^*$,
    $\mathcal D\succ0$,
and suppose that, for \(\mu\) in the open interval \(\mathsf M\),
    $A_\mu:=\mathcal R+2\mu\mathcal D$
is boundedly invertible and strictly positive:
    $A_\mu\succ0$.
For a fixed \(\rho_0\in\mathcal H\), define
    $u_\mu
    :=
    -A_\mu^{-1}\rho_0$.
Equivalently,
\begin{equation}\label{eq:u-mu-linear-system}
    A_\mu u_\mu+\rho_0=0,
    \qquad
    A_\mu=\mathcal R+2\mu\mathcal D .
\end{equation}
The secular function associated with the quadratic constraint is
    $\psi(\mu)
    :=
    \mathcal C(u_\mu)
    =
    \langle u_\mu,\mathcal D u_\mu\rangle$.
We compute \(\psi'(\mu)\). Since
    $A_\mu=\mathcal R+2\mu\mathcal D$,
we have
    $A_\mu'
    =
    \frac{d}{d\mu}A_\mu
    =
    2\mathcal D$.
Differentiating the identity
    $A_\mu A_\mu^{-1}=I$
with respect to \(\mu\) gives
    $A_\mu' A_\mu^{-1}
    +
    A_\mu \frac{d}{d\mu}A_\mu^{-1}
    =
    0$.
Hence
    $\frac{d}{d\mu}A_\mu^{-1}
    =
    -A_\mu^{-1}A_\mu'A_\mu^{-1}
    =
    -2A_\mu^{-1}\mathcal D A_\mu^{-1}$.
Using \(u_\mu=-A_\mu^{-1}\rho_0\), we obtain
\begin{equation}\label{eq:u-mu-derivative-first}
    \frac{d u_\mu}{d\mu}=
    -\frac{d}{d\mu}\bigl(A_\mu^{-1}\rho_0\bigr)=
    -\left(\frac{d}{d\mu}A_\mu^{-1}\right)\rho_0=
    2A_\mu^{-1}\mathcal D A_\mu^{-1}\rho_0 .
\end{equation}
Since
    $u_\mu=-A_\mu^{-1}\rho_0$,
    $A_\mu^{-1}\rho_0=-u_\mu$,
\eqref{eq:u-mu-derivative-first} becomes
\begin{equation}\label{eq:u-mu-derivative}
    \frac{d u_\mu}{d\mu}
    =
    -2A_\mu^{-1}\mathcal D u_\mu .
\end{equation}
Now differentiate \(\psi(\mu)=\langle u_\mu,\mathcal D u_\mu\rangle\). Since
\(\mathcal D=\mathcal D^*\),
\begin{equation}\label{eq:psi-derivative-start}
    \psi'(\mu)
    =
    \left\langle
        \frac{d u_\mu}{d\mu},
        \mathcal D u_\mu
    \right\rangle
    +
    \left\langle
        u_\mu,
        \mathcal D\frac{d u_\mu}{d\mu}
    \right\rangle
    =
    \left\langle
        \frac{d u_\mu}{d\mu},
        \mathcal D u_\mu
    \right\rangle
    +
    \left\langle
        \mathcal D u_\mu,
        \frac{d u_\mu}{d\mu}
    \right\rangle
    =
    2
    \left\langle
        \frac{d u_\mu}{d\mu},
        \mathcal D u_\mu
    \right\rangle.
\end{equation}
Substituting \eqref{eq:u-mu-derivative} into
\eqref{eq:psi-derivative-start} yields
\[
    \psi'(\mu)=
    2
    \left\langle
        -2A_\mu^{-1}\mathcal D u_\mu,
        \mathcal D u_\mu
    \right\rangle
    =
    -4
    \left\langle
        A_\mu^{-1}\mathcal D u_\mu,
        \mathcal D u_\mu
    \right\rangle .
\]
Because \(A_\mu\succ0\), its inverse is also positive definite:
    $A_\mu^{-1}\succ0$.
Therefore
    $\left\langle
        A_\mu^{-1}\mathcal D u_\mu,
        \mathcal D u_\mu
    \right\rangle
    \ge0$,
and consequently
\begin{equation}\label{eq:psi-monotone}
    \psi'(\mu)
    =
    -4
    \left\langle
        A_\mu^{-1}\mathcal D u_\mu,
        \mathcal D u_\mu
    \right\rangle
    \le0.
\end{equation}
Thus the secular function \(\psi\) is nonincreasing on the regular multiplier
interval \(\mathsf M\).
Moreover, equality in \eqref{eq:psi-monotone} occurs if and only if
    $\mathcal D u_\mu=0$.
Since \(\mathcal D\succ0\), this is equivalent to
    $u_\mu=0$.
By \eqref{eq:u-mu-linear-system}, \(u_\mu=0\) implies
    $\rho_0=0$.
Thus, except for the pure hard case \(\rho_0=0\), one has
\begin{equation}\label{eq:psi-strict}
    \psi'(\mu)<0
    \qquad
    \text{whenever }u_\mu\neq0.
\end{equation}

\begin{lemma}[Admissible interval and secular function]\label{lem:secular}
Define the \emph{admissible dual interval}
\[
  \mathsf M:=\bigl\{\mu\in\R:\ \mathcal R+2\mu\mathcal D\succ0\bigr\}.
\]
Under Lemma~\ref{lem:lq-reduction}:
\begin{enumerate}[label=\textup{(\roman*)}]
  \item $\mathsf M=(\underline\mu,\infty)$ with
  $\underline\mu=-\tfrac12\,\nu_{\min}<0$, where
  $\nu_{\min}:=\inf\operatorname{spec}\bigl(\mathcal D^{-1/2}\mathcal R
  \mathcal D^{-1/2}\bigr)\ge\rho/\bar d>0$; in particular $0\in\mathsf M$.
  \item For $\mu\in\mathsf M$ the strictly convex coercive functional
  $u\mapsto f(u)+\mu\Cc(u)$ has the unique minimizer
  \begin{equation}\label{eq:umu}
    u_\mu=-\bigl(\mathcal R+2\mu\mathcal D\bigr)^{-1}\rho_0,
  \end{equation}
  and the dual function
  $\phi_0(\mu):=\inf_{u}\{f(u)+\mu\Cc(u)\}=f(u_\mu)+\mu\Cc(u_\mu)$ is finite,
  concave, and continuously differentiable on $\mathsf M$ with
  \begin{equation}\label{eq:secular}
    \phi_0'(\mu)=\Cc(u_\mu)=\ip{u_\mu}{\mathcal D u_\mu}=:\psi(\mu)\ge0 .
  \end{equation}
  \item $\psi$ is nonincreasing and continuous on $\mathsf M$, with
  $\lim_{\mu\to\infty}\psi(\mu)=0$ and
  $\psi(\underline\mu^{+}):=\lim_{\mu\downarrow\underline\mu}\psi(\mu)\in(0,+\infty]$.
\end{enumerate}
\end{lemma}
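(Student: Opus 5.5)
For part (i), I would pass to the congruence $\mathcal K:=\mathcal D^{-1/2}\mathcal R\mathcal D^{-1/2}$. This is legitimate because, by Lemma~\ref{lem:lq-reduction}, $\mathcal D$ is bounded, self-adjoint and coercive, so $\mathcal D^{\pm1/2}$ are bounded and self-adjoint. Then
\[
\mathcal R+2\mu\mathcal D=\mathcal D^{1/2}\bigl(\mathcal K+2\mu I\bigr)\mathcal D^{1/2},
\]
and $\mathcal R+2\mu\mathcal D\succ0$ (read as coercive, i.e.\ boundedly invertible and strictly positive, as in the preceding paragraph) holds iff $\mathcal K+2\mu I$ is coercive. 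That happens exactly when $2\mu+\nu_{\min}>0$, so $\mathsf M=(-\nu_{\min}/2,\infty)$.

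For the bound on $\nu_{\min}$, I would estimate $\langle v,\mathcal K v\rangle=\langle w,\mathcal R w\rangle$ with $w=\mathcal D^{-1/2}v$. This gives $\langle w,\mathcal R w\rangle\ge\rho\|w\|^2$ and $\|w\|^2=\langle v,\mathcal D^{-1}v\rangle\ge\bar d^{-1}\|v\|^2$. Hence $\nu_{\min}\ge\rho/\bar d>0$, so $\underline\mu<0$ and $0\in\mathsf M$.

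For part (ii), fix $\mu\in\mathsf M$. The functional $f+\mu\Cc=\tfrac12\langle u,A_\mu u\rangle+\langle\rho_0,u\rangle+\gamma_0$ has coercive Hessian $A_\mu$. It is therefore strictly convex and coercive, and has a unique minimizer characterized by the gradient equation $A_\mu u+\rho_0=0$, which gives \eqref{eq:umu}. This shows $\phi_0$ is finite on $\mathsf M$.

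Concavity follows because $\phi_0$ is an infimum of functions affine in $\mu$. Differentiability I would get by the envelope argument. The map $\mu\mapsto A_\mu^{-1}$ is $C^1$ in operator norm on $\mathsf M$, since inversion is smooth on invertible operators, so $\mu\mapsto u_\mu$ is $C^1$ with derivative \eqref{eq:u-mu-derivative}. Differentiating $\phi_0(\mu)=f(u_\mu)+\mu\Cc(u_\mu)$ by the chain rule, the term $\langle A_\mu u_\mu+\rho_0,\,du_\mu/d\mu\rangle$ vanishes by \eqref{eq:u-mu-linear-system}. What remains is $\phi_0'(\mu)=\Cc(u_\mu)=\psi(\mu)\ge0$, the sign coming from $\mathcal D\succ0$; continuity of $\psi$ gives the $C^1$ claim.

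For part (iii), monotonicity is the computation \eqref{eq:psi-monotone} already carried out before the statement, and continuity follows from continuity of $u_\mu$. For the limit at $\infty$, I would write $u_\mu=-(2\mu)^{-1}\bigl(\mathcal D+\mathcal R/(2\mu)\bigr)^{-1}\rho_0$. Because $\mathcal R\succeq0$, the bracketed operator dominates $\mathcal D\succeq dI$, so $\|u_\mu\|\le\|\rho_0\|/(2\mu d)$ and $\psi(\mu)\le\bar d\|u_\mu\|^2\to0$.

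The main obstacle is the left endpoint $\psi(\underline\mu^+)\in(0,+\infty]$. Since $\psi$ is nonincreasing, the limit exists in $[0,+\infty]$, so only positivity is at stake. If $\rho_0\neq0$, then $u_\mu\neq0$ for every $\mu\in\mathsf M$, so $\psi>0$ on $\mathsf M$, and monotonicity gives $\psi(\underline\mu^+)\ge\psi(\mu)>0$ for any fixed $\mu\in\mathsf M$. The case $\rho_0=0$ forces $\psi\equiv0$. That is the pure hard case flagged after \eqref{eq:psi-strict}, and as stated the claim of strict positivity in (iii) fails there. I would therefore state explicitly that (iii) is proved under the standing nondegeneracy $\rho_0\neq0$, and note separately that for $\rho_0=0$ one has $\psi\equiv0$. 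Finiteness versus $+\infty$ at $\underline\mu$, which depends on whether $\mathcal D^{-1/2}\rho_0$ charges the bottom of the spectrum of $\mathcal K$, does not need to be decided.
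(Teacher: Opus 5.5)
Your proposal is correct and follows essentially the same route as the paper. The shared steps are the congruence by $\mathcal D^{-1/2}$ for (i), strong convexity plus an envelope (Danskin/chain-rule) argument for (ii), and, for (iii), the monotonicity computation together with the bound $\|(\mathcal R+2\mu\mathcal D)^{-1}\|\le(2\mu d)^{-1}$. Your explicit restriction of $\psi(\underline\mu^+)>0$ to $\rho_0\neq0$ matches the paper's own caveat that the pure hard case $\rho_0=0$ gives $\psi\equiv0$ and is treated separately.
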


\begin{proof}
\emph{(i)} With $\mathcal D\succ0$, $\mathcal R+2\mu\mathcal D\succ0$ iff
$\mathcal D^{-1/2}\mathcal R\mathcal D^{-1/2}+2\mu I\succ0$ iff
$\mu>-\tfrac12\nu_{\min}$; and $\nu_{\min}\ge\rho/\bar d>0$ from
\eqref{eq:R-coercive}, so $0\in\mathsf M$.

\emph{(ii)} For $\mu\in\mathsf M$, $\mathcal R+2\mu\mathcal D\succeq\epsilon_\mu I$
for some $\epsilon_\mu>0$, so $f+\mu\Cc$ is $\epsilon_\mu$-strongly convex and
coercive; its unique critical point solves
$(\mathcal R+2\mu\mathcal D)u+\rho_0=0$, giving \eqref{eq:umu}. Since $\phi_0$ is
an infimum of functions affine in $\mu$ it is concave; the minimizer $u_\mu$ is
unique and depends smoothly on $\mu$ (implicit function theorem on the invertible
$\mathcal R+2\mu\mathcal D$), so Danskin's theorem gives
$\phi_0'(\mu)=\partial_\mu[f(u_\mu)+\mu\Cc(u_\mu)]=\Cc(u_\mu)$, which is
\eqref{eq:secular}; nonnegativity holds because $\mathcal D\succ0$.

\emph{(iii)} Monotonicity of $\psi=\phi_0'$ follows from concavity of $\phi_0$ or \eqref{eq:psi-strict}.
As $\mu\to\infty$, $\|(\mathcal R+2\mu\mathcal D)^{-1}\|\le(2\mu d)^{-1}\to0$, so
$u_\mu\to0$ and $\psi(\mu)\le\bar d\|u_\mu\|^2\to0$. The limit
$\psi(\underline\mu^{+})$ exists in $(0,+\infty]$ by monotonicity; it is
strictly positive because $\psi$ is nonincreasing and eventually positive unless
$\rho_0=0$, in which case $u_\mu\equiv0$, $\psi\equiv0$, and one reads the
statement as $\psi(\underline\mu^+)=0$ (the pure hard case, treated separately).
\end{proof}

\begin{theorem}[Hidden convexity and strong duality for LQ]\label{thm:lq-hidden}
Consider \eqref{eq:lq-iso-problem} under the definiteness of
Lemma~\ref{lem:lq-reduction}, and let $\beta>0$ lie in the interior of the
attainable range of $\Cc$. Then:
\begin{enumerate}[label=\textup{(\roman*)}]
  \item \textup{(No duality gap.)} There is a dual-optimal
  $\mu^{*}=\mu^{*}(\beta)\in\overline{\mathsf M}=[\underline\mu,\infty)$ and a
  primal-optimal $u^{*}$ with $\Cc(u^{*})=\beta$ such that
  \[
    V_{\rm LQ}(\beta)=f(u^{*})=\phi_0(\mu^{*})-\mu^{*}\beta
    =\max_{\mu\in\overline{\mathsf M}}\bigl\{\phi_0(\mu)-\mu\beta\bigr\}
    =\max_{\mu\in\R}\ \inf_{u\in\mathcal H}
       \bigl\{J_{\rm LQ}(u)+\mu(\Cc(u)-\beta)\bigr\}.
  \]
  \item \textup{(Convexity.)} $V_{\rm LQ}$ is a proper closed convex function of
  $\beta$ on its interval domain, being the supremum of the affine maps
  $\beta\mapsto\phi_0(\mu)-\mu\beta$.
  \item \textup{(Envelope.)} At every differentiability point of $V_{\rm LQ}$,
  \begin{equation}\label{eq:lq-envelope}
    V_{\rm LQ}'(\beta)=-\mu^{*}(\beta),
  \end{equation}
  and $\mu^{*}(\beta)$ is nonincreasing in $\beta$.
\end{enumerate}
\end{theorem}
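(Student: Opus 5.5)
We work throughout with the reduced Hilbert-space QCQP of Lemma~\ref{lem:lq-reduction}, $\min f(u)$ subject to $\Cc(u)=\beta$. The multiplier is constructed through the secular function $\psi$ of Lemma~\ref{lem:secular}. Assume first $\rho_0\neq0$; the case $\rho_0=0$ reduces to a generalized eigenproblem and is handled at the end. Weak duality is immediate: for any $\mu\in\R$ and any feasible $u$,
\[
f(u)=f(u)+\mu(\Cc(u)-\beta)\ge \inf_{v}\{f(v)+\mu\Cc(v)\}-\mu\beta .
\]
For $\mu\notin\overline{\mathsf M}$ the inner infimum is $-\infty$, because $\mathcal R+2\mu\mathcal D$ has a negative direction and the quadratic term dominates along it. Hence the $\max$ over $\R$ may be restricted to $\overline{\mathsf M}$, and $V_{\rm LQ}(\beta)\ge\sup_{\mu\in\overline{\mathsf M}}\{\phi_0(\mu)-\mu\beta\}$.

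\emph{Regular case.} Suppose $0<\beta<\psi(\underline\mu^+)$. By Lemma~\ref{lem:secular}(iii), $\psi$ is continuous and nonincreasing with $\psi\to0$ at $+\infty$, and by \eqref{eq:psi-strict} it is strictly decreasing. The intermediate value theorem therefore gives a unique $\mu^*\in\mathsf M$ with $\psi(\mu^*)=\beta$. Put $u^*=u_{\mu^*}$. Then $\Cc(u^*)=\beta$, and $u^*$ minimizes the convex Lagrangian $f+\mu^*\Cc$ over all of $\mathcal H$. Consequently, for every feasible $u$,
\[
f(u)=f(u)+\mu^*(\Cc(u)-\beta)\ge f(u^*)=\phi_0(\mu^*)-\mu^*\beta .
\]
Combined with weak duality, this gives all equalities in (i). Dual optimality also follows directly: $\frac{d}{d\mu}[\phi_0(\mu)-\mu\beta]=\psi(\mu)-\beta$ vanishes at $\mu^*$, and the function $\mu\mapsto\phi_0(\mu)-\mu\beta$ is concave.

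\emph{Hard case and main obstacle.} The main obstacle is $\beta\ge\psi(\underline\mu^+)$, which forces $\psi(\underline\mu^+)<\infty$. Here I would take $\mu^*=\underline\mu$. The operator $A_{\underline\mu}=\mathcal R+2\underline\mu\mathcal D$ is positive semidefinite and singular, since $\mathcal H_N$ is finite-dimensional. Finiteness of $\psi(\underline\mu^+)$ means $\rho_0\perp\ker A_{\underline\mu}$, so $\bar u:=\lim_{\mu\downarrow\underline\mu}u_\mu=-A_{\underline\mu}^{+}\rho_0$ exists and satisfies $\Cc(\bar u)=\psi(\underline\mu^+)$. Pick $w\in\ker A_{\underline\mu}$, $w\neq0$, with $\ip{w}{\mathcal D\bar u}\ge0$ (replace $w$ by $-w$ if needed). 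Then $s\mapsto\Cc(\bar u+sw)$ is a quadratic in $s$ that is increasing on $[0,\infty)$ and unbounded. Hence some $s^*\ge0$ gives $\Cc(\bar u+s^*w)=\beta$. The point $u^*=\bar u+s^*w$ still solves $A_{\underline\mu}u+\rho_0=0$. It therefore minimizes the convex, semidefinite Lagrangian $f+\underline\mu\,\Cc$, and the same saddle computation as in the regular case closes the duality gap. The care needed is the use of finite dimensionality to guarantee that $\ker A_{\underline\mu}$ is nontrivial and that the pseudo-inverse limit exists. The pure case $\rho_0=0$ is identical with $\bar u=0$, where $u^*$ is a $\mathcal D$-normalized eigenvector.

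\emph{Parts (ii) and (iii).} From (i), $V_{\rm LQ}(\beta)=\sup_{\mu\in\overline{\mathsf M}}\{\phi_0(\mu)-\mu\beta\}$ on the interior of the attainable range. This is a pointwise supremum of affine functions of $\beta$, hence closed and convex; it is proper because it is finite there, which gives (ii). For each $\tilde\beta$,
\[
V_{\rm LQ}(\tilde\beta)\ge\phi_0(\mu^*)-\mu^*\tilde\beta=V_{\rm LQ}(\beta)-\mu^*(\tilde\beta-\beta),
\]
so $-\mu^*(\beta)\in\partial V_{\rm LQ}(\beta)$. At a differentiability point the subdifferential is a singleton, which yields \eqref{eq:lq-envelope}. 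Finally, the subdifferential of a convex function is monotone, so $-\mu^*$ is nondecreasing in $\beta$. In the regular range this can also be read off directly from $\mu^*=\psi^{-1}(\beta)$ with $\psi$ decreasing, which proves (iii).
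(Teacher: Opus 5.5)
Your proposal is correct and follows essentially the same route as the paper: an intermediate-value crossing of the secular function $\psi$ in the regular case, and $\mu^{*}=\underline\mu$ with a kernel direction of $\mathcal R+2\underline\mu\mathcal D$ to reach level $\beta$ in the hard case. It then uses the same saddle inequality against weak duality and the same supremum-of-affine-maps and subgradient argument for (ii)--(iii); your explicit appeal to finite dimensionality, to guarantee a nontrivial kernel, makes precise a point the paper uses only implicitly. One harmless slip: $\lim_{\mu\downarrow\underline\mu}u_\mu$ is the solution of $(\mathcal R+2\underline\mu\mathcal D)u=-\rho_0$ that is $\mathcal D$-orthogonal to the kernel, not in general the Moore--Penrose solution, but you only use that it solves this equation and satisfies $\Cc(\bar u)=\psi(\underline\mu^{+})$.
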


\begin{proof}
\emph{Existence of a crossing multiplier.}
By Lemma~\ref{lem:secular}, $\psi$ is continuous and nonincreasing on
$\mathsf M=(\underline\mu,\infty)$ with $\psi(\infty)=0$ and
    $\psi(\underline\mu^{+})\in[0,+\infty]$.
Two cases arise. Moreover, \(\psi(\underline\mu^{+})=0\) can occur only in the pure hard case
\(\rho_0=0\).

\emph{Regular case: $\psi(\underline\mu^{+})>\beta$.}
Since $\psi$ decreases continuously from a value $>\beta$ to $0$, there is a
unique $\mu^{*}\in\mathsf M$ with $\psi(\mu^{*})=\beta$, i.e.\
$\Cc(u_{\mu^{*}})=\beta$. Then $u^{*}:=u_{\mu^{*}}$ is feasible, and by
construction it minimizes $f+\mu^{*}\Cc$; hence for every feasible $u$
(i.e.\ $\Cc(u)=\beta$),
  \[
  f(u)=f(u)+\mu^{*}(\Cc(u)-\beta)\ge f(u^{*})+\mu^{*}(\Cc(u^{*})-\beta)=f(u^{*}),
  \]
so $u^{*}$ is primal-optimal and
$V_{\rm LQ}(\beta)=f(u^{*})=\phi_0(\mu^{*})-\mu^{*}\beta$. Here
$\mathcal R+2\mu^{*}\mathcal D\succ0$.

\emph{Hard case: $\psi(\underline\mu^{+})\le\beta$.}
Set $\mu^{*}:=\underline\mu$, so $\mathcal N:=\ker(\mathcal R+2\underline\mu
\mathcal D)\neq\{0\}$ and $\mathcal R+2\underline\mu\mathcal D\succeq0$.
Boundedness of $\phi_0$ near $\underline\mu$ forces
$\rho_0\perp\mathcal N$ (otherwise $f+\mu\Cc\to-\infty$ as
$\mu\downarrow\underline\mu$), so the equation
$(\mathcal R+2\underline\mu\mathcal D)u=-\rho_0$ has a particular solution
$u_p\perp\mathcal N$, and the minimizers of $f+\underline\mu\Cc$ form the affine
set $u_p+\mathcal N$. On this set,
\[
  n\longmapsto \Cc(u_p+n)=\ip{u_p+n}{\mathcal D(u_p+n)},\qquad n\in\mathcal N,
\]
is a coercive convex quadratic (because $\mathcal D\succ0$), so its range is
$[\,m_0,\infty)$ with $m_0=\min_{n\in\mathcal N}\Cc(u_p+n)\le
\lim_{\mu\downarrow\underline\mu}\Cc(u_\mu)=\psi(\underline\mu^{+})\le\beta$.
Hence there exists $n^{*}\in\mathcal N$ with $\Cc(u_p+n^{*})=\beta$; put
$u^{*}:=u_p+n^{*}$. Since $u^{*}$ minimizes $f+\underline\mu\Cc$ and is feasible,
the displayed inequality of the regular case again gives
$V_{\rm LQ}(\beta)=f(u^{*})=\phi_0(\underline\mu)-\underline\mu\beta$.

\emph{Duality and convexity.}
In both cases weak duality gives, for all $\mu\in\mathsf M$,
$V_{\rm LQ}(\beta)\ge\phi_0(\mu)-\mu\beta$; the constructed $\mu^{*}$ attains
equality, so
$V_{\rm LQ}(\beta)=\max_{\mu\in\overline{\mathsf M}}\{\phi_0(\mu)-\mu\beta\}$,
which equals the full Lagrangian dual because the inner infimum is $-\infty$ for
$\mu\notin\overline{\mathsf M}$. As a supremum of affine functions of $\beta$,
$V_{\rm LQ}$ is closed convex; its domain is an interval. This proves (i)--(ii).

\emph{Envelope.} $V_{\rm LQ}=\sup_\mu\{\phi_0(\mu)-\mu\beta\}$ is a Legendre
transform, so $-\mu\in\partial V_{\rm LQ}(\beta)$ iff $\mu$ attains the supremum,
i.e.\ iff $\mu\in\{\mu^{*}(\beta)\}$-set. At a differentiability point the
subdifferential is the singleton $\{V_{\rm LQ}'(\beta)\}$, whence
$V_{\rm LQ}'(\beta)=-\mu^{*}(\beta)$; monotonicity of $\mu^{*}$ follows from
convexity of $V_{\rm LQ}$ (the subgradient $-\mu^{*}$ is nondecreasing). This is
the minimization-form analogue of Corollary~\ref{cor:envelope}; the sign matches
Remark~\ref{rem:shadow-price}.
\end{proof}

\begin{remark}[Dual sign in the LQ minimization problem]
In Section~\ref{sec:sensitivity}, the primal problem is a maximization and the
dual representation has the form
    $V(B)=\inf_\mu\{\ell(\mu)-\mu B\}$.
Here the primal problem is a minimization. Therefore the Lagrange dual is
    $V_{\rm LQ}(\beta)
    =
    \sup_\mu\{\phi_0(\mu)-\mu\beta\}$,
where \(\phi_0(\mu)=\inf_u\{f(u)+\mu\mathcal C(u)\}\).
The envelope sign remains \(V_{\rm LQ}'(\beta)=-\mu\).
\end{remark}

\begin{remark}[Relation to classical hidden-convexity results]\label{rem:lq-generality}
Theorem~\ref{thm:lq-hidden} is a strong-duality/hidden-convexity statement for a
single quadratic equality constraint: the feasible set
$\{u:\ip{u}{\mathcal D u}=\beta\}$ is a nonconvex quadric, yet the duality gap
vanishes. Equivalently, the joint range
$\{(f(u),\Cc(u)):u\in\mathcal H\}$ is convex; in finite dimensions this is the Dines--Brickman theorem \cite{wang2026introduction,dines1941mapping,brickman1961field,yu2026bilinear} on the convexity of the joint
numerical range of two quadratic forms (a relative of the Toeplitz--Hausdorff theorem \cite{toeplitz1918algebraische,hausdorff1919wertvorrat,wang2026introduction2}), and in the Hilbert-space setting it is Polyak's theorem \cite{wang2026lecture,polyak1998convexity,hiriart2002permanently,wang2026vital}, whose definiteness hypothesis
is precisely $\mathcal R+2\mu\mathcal D\succ0$ for some $\mu$---i.e.\
$\mathsf M\neq\emptyset$, which holds here by Lemma~\ref{lem:secular}(i). The
construction above is the equality-constrained analogue of the S-lemma
\cite{yakubovich1971s,fradkov1979thes,yu2026beyond,polik2007survey,beck2006strong,wang2026finite} and coincides with the
Mor\'e--Sorensen analysis \cite{more1983computing,gay1981computing,wang2026first,yu2026mode,sorensen1982newton,jie2026optimal,yu2026structural,conn2000trust} of the
trust-region subproblem: the regular case is the ``easy'' case
$\mathcal R+2\mu^{*}\mathcal D\succ0$, and the hard case
$\mu^{*}=\underline\mu$ with $\rho_0\perp\ker(\mathcal R+2\underline\mu
\mathcal D)$ is exactly the trust-region ``hard case.'' The matrix
$R+2\mu D$ thus plays two distinct roles: existence of \emph{some} $\mu$ making
it positive definite is the convexity certificate, while positive definiteness at
the \emph{optimal} $\mu^{*}$ is what enables the Riccati synthesis.
\end{remark}

\begin{proposition}[Sensitivity survives, synthesis may not]
\label{prop:riccati-vs-sensitivity}
Under the hypotheses of Theorem~\ref{thm:lq-hidden}, the envelope formula
\eqref{eq:lq-envelope} holds at every differentiability point of $V_{\rm LQ}$
regardless of the sign of $R+2\mu^*D$. However, the Riccati feedback
representation \eqref{eq:lq-control-mu}--\eqref{eq:lq-modified-riccati} is valid
only if $R(t)+2\mu^*(\beta)D(t)\succ0$ a.e.\ at the optimal multiplier. If
$R+2\mu^*D$ is merely positive semidefinite and singular, the extremal control
is not given by \eqref{eq:lq-control-mu} and \eqref{eq:lq-modified-riccati} need
not be defined, even though $V_{\rm LQ}'(\beta)=-\mu^*$ remains correct.
\end{proposition}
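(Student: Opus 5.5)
The proposition has three parts, and I would prove them in the following order.

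\emph{Part 1: the envelope formula holds regardless of the sign of $R+2\mu^*D$.} This is a re-reading of Theorem~\ref{thm:lq-hidden}(iii). Its proof never used $\mathcal R+2\mu^*\mathcal D\succ0$. It only used two facts:
\begin{itemize}
\item the representation $V_{\rm LQ}(\beta)=\sup_{\mu\in\overline{\mathsf M}}\{\phi_0(\mu)-\mu\beta\}$;
\item attainment of this supremum by some $\mu^*$, constructed in both the regular case and the hard case.
\end{itemize}
I would point out that in the hard case $\mu^*=\underline\mu$, the operator $\mathcal R+2\mu^*\mathcal D$ is positive semidefinite with nontrivial kernel $\mathcal N$. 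Even so, the primal-dual pair $(u^*,\mu^*)$ gives $V_{\rm LQ}(\beta)=\phi_0(\underline\mu)-\underline\mu\beta$. Because $V_{\rm LQ}$ is a supremum of affine functions of $\beta$ with the supremum attained at $\mu^*$, it lies above the affine minorant $\beta'\mapsto\phi_0(\mu^*)-\mu^*\beta'$ and touches it at $\beta$. Hence $-\mu^*\in\partial V_{\rm LQ}(\beta)$. At a differentiability point this forces $V_{\rm LQ}'(\beta)=-\mu^*$. Nothing in this argument refers to invertibility.

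\emph{Part 2: Riccati validity requires $R+2\mu^*D\succ0$ a.e.} Formula \eqref{eq:lq-control-mu} comes from solving the stationarity condition $(R+2\mu D)u+B\tp\lambda=0$ pointwise. The Riccati equation \eqref{eq:lq-modified-riccati} contains $(R+2\mu D)^{-1}$. So both require invertibility a.e.

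I would then connect the pointwise condition to the operator condition. Since $\mathcal D$ is multiplication by $D(t)$, the operator $\mathcal R+2\mu\mathcal D$ equals multiplication by $R+2\mu D$ plus the positive semidefinite term $\Xi^*\mathcal Q\Xi$. This is the point I expect to need the most care. The operator $\mathcal R+2\mu^*\mathcal D$ can be singular while $R+2\mu^*D$ is not, and conversely. For that reason I would state the claim as it is written: the feedback representation is \emph{defined and valid} only when $R(t)+2\mu^*D(t)\succ0$ a.e., since otherwise the inverse appearing in \eqref{eq:lq-control-mu} does not exist on a set of positive measure.

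\emph{Part 3: the singular case.} Suppose $R+2\mu^*D$ is positive semidefinite and singular on a set $E$ of positive measure. Then stationarity does not determine $u^*$ on $E$: one can add any selection $n(t)\in\ker(R+2\mu^*D)$ there. The expression $(R+2\mu^*D)^{-1}$ in \eqref{eq:lq-modified-riccati} is undefined, so the Riccati equation is not defined.

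To show the extremal genuinely fails to be of the form \eqref{eq:lq-control-mu}, I would use the degenerate scalar example. Take $n=m=1$, $A=B=0$, $Q=M=0$, $R=r$, $D=1$. Then $\mathcal R=r I$, $\rho_0=0$, and $\underline\mu=-r/2$. This is the pure hard case. Every $u$ with $\|u\|^2=\beta$ is optimal, and $V_{\rm LQ}(\beta)=r\beta/2$, so $V_{\rm LQ}'(\beta)=r/2=-\underline\mu$. Meanwhile $R+2\mu^*D=0$, so the feedback formula would give $u\equiv0$, which is infeasible.

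This example exhibits the claimed dichotomy. Sensitivity holds, because the envelope formula gives the correct derivative. Synthesis fails, because the Riccati feedback produces an infeasible control.
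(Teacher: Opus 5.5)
Your proposal is correct and follows essentially the paper's argument. The envelope formula is read off from Theorem~\ref{thm:lq-hidden}(iii), whose Legendre-transform proof never uses invertibility of $R+2\mu^*D$, and the failure of synthesis comes from having to solve $(R+2\mu^*D)u=-B\tp\lambda$, whose solution is not unique (and whose inverse in \eqref{eq:lq-modified-riccati} is undefined) when the matrix is singular. Your explicit example is the paper's degenerate example of Subsection~\ref{subsec:degenerate-scalar-dual} with $R=r$. State its conclusion as ``at $\mu^*=-r/2$ the feedback formula is undefined, while for every $\mu>-r/2$ it gives $u_\mu\equiv0$, which is infeasible for $\beta>0$'', rather than saying the formula itself gives $u\equiv0$ at $\mu^*$.
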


\begin{proof}
The sensitivity formula is a property of the convex function $V_{\rm LQ}$ and its
dual multiplier (Theorem~\ref{thm:lq-hidden}(iii)); it does not reference the
feedback. The feedback \eqref{eq:lq-control-mu} is obtained by solving the
stationarity equation $(R+2\mu^*D)u=-B\tp\lambda$ for $u$, which requires
invertibility of $R+2\mu^*D$; if this matrix is singular the solution is not
unique (or fails to exist for generic $\lambda$), and the Riccati substitution
$\lambda=S_\mu x$ leading to \eqref{eq:lq-modified-riccati} inherits the
singularity through the term $(R+2\mu D)^{-1}$.
\end{proof}

\subsection{Degenerate scalar dual function}
\label{subsec:degenerate-scalar-dual}

The following scalar example is exactly the boundary case
$R+2\mu^*D=0$ of Proposition~\ref{prop:riccati-vs-sensitivity}: strong duality
holds (the joint range is a convex ray), the sensitivity formula is correct, and
yet the Riccati representation degrades.

Consider the scalar equality-constrained quadratic problem
\[
    V_{\rm LQ}(\beta)
    :=
    \inf
    \left\{
        \frac12\int_0^T u(t)^2\,dt:
        u\in L^2(0,T),\
        \int_0^T u(t)^2\,dt=\beta
    \right\},
    \qquad
    \beta>0 .
\]
Equivalently, using the \(L^2\)-norm,
\[
    V_{\rm LQ}(\beta)
    =
    \inf
    \left\{
        \frac12\|u\|_{L^2(0,T)}^2:
        \|u\|_{L^2(0,T)}^2=\beta
    \right\}.
\]
Since every feasible \(u\) satisfies \(\|u\|_{L^2}^2=\beta\), the value is
immediate:
\begin{equation}\label{eq:degenerate-primal-value}
    V_{\rm LQ}(\beta)
    =
    \frac12\beta .
\end{equation}
The equality constraint is
    $\mathcal C(u)
    :=
    \int_0^T u(t)^2\,dt
    =
    \|u\|_{L^2(0,T)}^2
    =
    \beta$.
Using the Lagrangian convention
\[
    L(u,\mu;\beta)
    :=
    \frac12\|u\|_{L^2}^2
    +
    \mu\bigl(\|u\|_{L^2}^2-\beta\bigr),
\]
we obtain
    $L(u,\mu;\beta)=
    \left(\frac12+\mu\right)\|u\|_{L^2}^2
    -
    \mu\beta$.
For the minimization problem, the dual function is
    $q(\mu;\beta)
    :=
    \inf_{u\in L^2(0,T)}
    L(u,\mu;\beta)$.
It is convenient to separate the part independent of \(\beta\):
\[
    \phi_0(\mu)
    :=
    \inf_{u\in L^2(0,T)}
    \left\{
        \frac12\|u\|_{L^2}^2+\mu\|u\|_{L^2}^2
    \right\}.
\]
Thus
    $q(\mu;\beta)
    =
    \phi_0(\mu)-\mu\beta$.
We now compute \(\phi_0(\mu)\). Since
    $\frac12\|u\|_{L^2}^2+\mu\|u\|_{L^2}^2
    =
    \left(\frac12+\mu\right)\|u\|_{L^2}^2$,
there are two cases.
\medskip
\noindent
\textbf{Case 1: \(\mu\ge-\frac12\).}
Then
    $\frac12+\mu\ge0$.
Hence, for every \(u\in L^2(0,T)\),
    $\left(\frac12+\mu\right)\|u\|_{L^2}^2\ge0$.
Taking \(u=0\) gives
\begin{equation}\label{eq:case1-phi}
    \phi_0(\mu)
    =
    \inf_{u\in L^2}
    \left(\frac12+\mu\right)\|u\|_{L^2}^2
    =
    0,
    \qquad
    \mu\ge-\frac12.
\end{equation}
At the boundary value \(\mu=-\frac12\), the whole quadratic term vanishes:
    $\left(\frac12+\mu\right)\|u\|_{L^2}^2
    =
    0$
    for all $u\in L^2(0,T)$.
Thus the infimum is still \(0\), but every \(u\in L^2(0,T)\) is a minimizer
of the inner Lagrangian part.

\noindent
\textbf{Case 2: \(\mu<-\frac12\).}
Then
    $\frac12+\mu<0$.
Choose any nonzero \(v\in L^2(0,T)\), and set
    $u_\alpha:=\alpha v$,
    $\alpha>0$.
Then
    $\left(\frac12+\mu\right)\|u_\alpha\|_{L^2}^2
    =
    \left(\frac12+\mu\right)\alpha^2\|v\|_{L^2}^2$.
Since \(\frac12+\mu<0\) and \(\|v\|_{L^2}^2>0\),
    $\left(\frac12+\mu\right)\alpha^2\|v\|_{L^2}^2
    \longrightarrow
    -\infty$
     as $\alpha\to+\infty$.
Therefore
\begin{equation}\label{eq:case2-phi}
    \phi_0(\mu)
    =
    \inf_{u\in L^2}
    \left(\frac12+\mu\right)\|u\|_{L^2}^2
    =
    -\infty,
    \qquad
    \mu<-\frac12.
\end{equation}
Combining \eqref{eq:case1-phi} and \eqref{eq:case2-phi}, we obtain
\[
    \phi_0(\mu)
    =
    \begin{cases}
        0, & \mu\ge-\dfrac12,\\[2mm]
        -\infty, & \mu<-\dfrac12.
    \end{cases}
\]
Consequently, the Lagrange dual problem is
    $\sup_{\mu\in\mathbb R} q(\mu;\beta)
    =
    \sup_{\mu\in\mathbb R}
    \{\phi_0(\mu)-\mu\beta\}
    =
    \sup_{\mu\ge-\frac12}
    \{-\mu\beta\}$.
Since \(\beta>0\), the function \(\mu\mapsto-\mu\beta\) is strictly decreasing.
Therefore its supremum over \([-\frac12,\infty)\) is attained at the left
endpoint 
    $\mu^*
    =
    -\frac12$.
The dual value is
\[
    \sup_{\mu\ge-\frac12}\{-\mu\beta\}
    =
    \frac12\beta.
\]
Comparing with the primal value \eqref{eq:degenerate-primal-value}, we get
    $V_{\rm LQ}(\beta)
    =
    \frac12\beta
    =
    \sup_{\mu\in\mathbb R}
    q(\mu;\beta)$,
so there is no duality gap.
Finally, the envelope identity gives
\begin{equation}\label{eq:degenerate-envelope}
    V_{\rm LQ}'(\beta)
    =
    \frac12
    =
    -\mu^*.
\end{equation}
Indeed, since
    $V_{\rm LQ}(\beta)=\frac12\beta$,
we have
    $\frac{d}{d\beta}V_{\rm LQ}(\beta)
    =
    \frac12$.
On the other hand, \(\mu^*=-\frac12\), and therefore
    $-\mu^*
    =
    V_{\rm LQ}'(\beta)$.
The degeneracy is visible in the modified control weight. In this scalar
example \(R=D=1\), hence
    $R+2\mu^*D
    =
    1+2\left(-\frac12\right)
    =
    0$.
Thus the optimal multiplier lies at the boundary of the dual feasibility
region. The sensitivity formula \eqref{eq:degenerate-envelope} remains valid,
but the regular Riccati feedback formula, which requires
\(R+2\mu^*D\succ0\), is not applicable.

\begin{remark}[Interpretation]\label{rem:degenerate-interpretation}
Polyak's certificate $\exists\mu:R+2\mu D\succ0$ \emph{is} satisfied in the above example (take $\mu=0$, giving $R=1\succ0$), so
hidden convexity and strong duality hold. The degeneracy is that the
\emph{optimal} multiplier $\mu^*=-\tfrac12$ sits on the boundary
$R+2\mu^*D=0$. This is not a defect of the theory but its sharp edge: imposing an
equality directly on the same quadratic quantity that appears in the objective
fixes the objective value once $\beta$ is fixed, and the multiplier then cancels
the control weight in stationarity. The example therefore separates the two roles
of $R+2\mu D$---convexity certificate (needs \emph{some} $\mu$) versus synthesis
regularity (needs the \emph{optimal} $\mu^*$)---which the regular Riccati theory
conflates.
\end{remark}

\section{Numerical Realization and Conclusions}
\label{sec:numerics}

The sensitivity results of Sections~\ref{sec:sensitivity}--\ref{sec:lq} are
directly testable: one computes the isoperimetric multiplier $\mu$, verifies the
integral constraint, and checks the envelope law $V'(B)=-\mu$ against a
finite-difference slope of the value function. This section gives a compact,
reproducible algorithm for the computation and reports three experiments---one
in the structured class, one in the regular linear-quadratic regime, and one in
the degenerate regime---chosen so that $\mu$ and $V$ are available in closed
form and the numerics can be validated to machine precision.

\subsection{Shooting formulation for the isoperimetric BVP}

Fix a constraint level $B$. Given the Hamiltonian selector
\begin{equation}\label{eq:selector}
  u^{*}(t,x,\lambda,\mu)\in\argmax_{u\in U}\Ht(t,x,u,\lambda,\mu),
\end{equation}
and an unknown pair $(\lambda_0,\mu)\in\Rn\times\R$ consisting of the initial
adjoint $\lambda(0)=\lambda_0$ and the constant isoperimetric multiplier $\mu$,
integrate the augmented state--adjoint system by a standard initial-value
solver \cite{hairer1993solving,yu2026chemotactic,wang2025hybrid,wang2025well,yu2026diagnostic} within an outer shooting iteration
\cite{stoer1980introduction,keller2018numerical,yu2026controlling,ascher1995numerical,betts2010practical,pesch1994practical,yu2026optimization1,yu2026microscopic,trelat2012optimal}
\begin{equation}\label{eq:shooting-ivp}
  \dot x=g\bigl(t,x,u^{*}\bigr),
  \qquad
  \dot\lambda=-\nabla_x\Ht\bigl(t,x,u^{*},\lambda,\mu\bigr),
  \qquad
  \dot z=h\bigl(t,x,u^{*}\bigr),
\end{equation}
with $x(0)=x_0$ and $z(0)=0$. The two-point boundary conditions
\eqref{eq:iso-pmp-terminal}--\eqref{eq:iso-constraint-satisfied} become the
\emph{shooting residual}
\begin{equation}\label{eq:residual}
  \mathcal R(\lambda_0,\mu;B)
  :=
  \begin{pmatrix}
    \lambda(T)-\nabla\phi\bigl(x(T)\bigr)\\[2pt]
    z(T)-B
  \end{pmatrix}
  \in\Rn\times\R,
\end{equation}
and the extremal is characterized by
\begin{equation}\label{eq:residual-zero}
  \mathcal R(\lambda_0,\mu;B)=0 .
\end{equation}
The first block enforces transversality; the second enforces the isoperimetric
constraint $\Cc(u^{*})=z(T)=B$. Once \eqref{eq:residual-zero} is solved, $\mu$ is
the isoperimetric multiplier and, by Theorem~\ref{thm:pmp-representation},
$-\mu\in\partial^{+}V(B)$.
For minimization problems, such as the LQ examples in Section~\ref{sec:lq},
\(\argmax\) in \eqref{eq:selector} is replaced by \(\argmin\).

\begin{proposition}[Local convergence of shooting]\label{prop:shooting}
Assume that on a neighborhood of an extremal the selector
$u^{*}(t,x,\lambda,\mu)$ of \eqref{eq:selector} is single-valued and $C^{1}$ in
$(x,\lambda,\mu)$, that the residual $\mathcal R(\cdot,\cdot;B)$ is $C^{1}$ with
locally Lipschitz derivative, and that the Jacobian
\begin{equation}\label{eq:shooting-jacobian}
  D_{(\lambda_0,\mu)}\mathcal R(\lambda_0^{*},\mu^{*};B)
  \in\R^{(n+1)\times(n+1)}
\end{equation}
is nonsingular at a solution $(\lambda_0^{*},\mu^{*})$. Then Newton's method
\[
  \begin{pmatrix}\lambda_0\\\mu\end{pmatrix}^{\!+}
  =
  \begin{pmatrix}\lambda_0\\\mu\end{pmatrix}
  -\bigl[D_{(\lambda_0,\mu)}\mathcal R\bigr]^{-1}\,
  \mathcal R(\lambda_0,\mu;B)
\]
converges locally quadratically to $(\lambda_0^{*},\mu^{*})$.
\end{proposition}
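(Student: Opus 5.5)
The plan is to recognise this as the standard Newton--Kantorovich local convergence theorem applied to the finite-dimensional map $F:=\mathcal R(\cdot,\cdot;B):\R^{n+1}\to\R^{n+1}$ near the root $y^*:=(\lambda_0^*,\mu^*)$. The hypotheses of Proposition~\ref{prop:shooting} already give $F\in C^1$ with locally Lipschitz derivative and $DF(y^*)$ nonsingular. What remains is to justify that these hypotheses are meaningful for the shooting residual \eqref{eq:residual}, and then to run the usual quadratic-error estimate. I would remark first that the single-valued $C^1$ selector makes the right-hand side of \eqref{eq:shooting-ivp} a $C^1$ vector field in $(x,\lambda,z)$ with parameter $\mu$. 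It is measurable in $t$ and locally Lipschitz. By differentiable dependence of Carath\'eodory solutions on initial data and parameters, the flow at time $T$ is $C^1$ in $(\lambda_0,\mu)$ on a neighbourhood where the extremal stays in the region of regularity. Since $\nabla\phi$ is composed with $x(T)$, this is consistent with the assumed regularity of $F$; the assumption is stated, so I only need to cite it.

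\textbf{Step 1 (uniform invertibility).} By continuity of $DF$ and nonsingularity at $y^*$, I choose $r>0$ such that on the ball $\bar B_r(y^*)$ three things hold: $DF(y)$ is invertible, $\|DF(y)^{-1}\|\le\beta$, and $\|DF(y)-DF(y')\|\le L\|y-y'\|$. For the bound on the inverse I use the Banach perturbation lemma: if $\|DF(y)-DF(y^*)\|\le 1/(2\|DF(y^*)^{-1}\|)$, then $\|DF(y)^{-1}\|\le 2\|DF(y^*)^{-1}\|$.

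\textbf{Step 2 (error recursion).} For $y_k\in \bar B_r(y^*)$, using $F(y^*)=0$, I write
\[
y_{k+1}-y^*=DF(y_k)^{-1}\Bigl[DF(y_k)(y_k-y^*)-\bigl(F(y_k)-F(y^*)\bigr)\Bigr].
\]
The mean-value integral $F(y_k)-F(y^*)=\int_0^1 DF(y^*+s(y_k-y^*))(y_k-y^*)\,ds$ together with the Lipschitz bound gives
\[
\|y_{k+1}-y^*\|\le \tfrac{\beta L}{2}\|y_k-y^*\|^2 .
\]

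\textbf{Step 3 (induction).} I shrink the radius to $r'=\min\{r,1/(\beta L)\}$. Then $\|y_{k+1}-y^*\|\le\tfrac12\|y_k-y^*\|$, so the iterates remain in the ball and converge, and the Step~2 inequality is exactly local quadratic convergence.

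The only delicate point is Step~1's reliance on the neighbourhood where the selector is $C^1$. Trajectories started from nearby $(\lambda_0,\mu)$ must not leave the region where $u^*$ is single-valued and smooth, which would happen, for example, if the argmax switched. I would handle this by taking $r$ small enough, using continuous dependence of the flow on $(\lambda_0,\mu)$ and compactness of $[0,T]$, so that the whole perturbed state--adjoint trajectory lies in the assumed neighbourhood of the extremal. I would then simply invoke the hypothesis rather than prove regularity of the selector.
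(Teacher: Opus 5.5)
Your proposal is correct and takes the same route as the paper. In both arguments, regularity of the residual comes from smooth dependence of the shooting flow on $(\lambda_0,\mu)$ together with the stated hypotheses, and the conclusion is the standard local quadratic convergence of Newton's method for a map with locally Lipschitz, nonsingular Jacobian; the only difference is that the paper cites that theorem, whereas you reprove it correctly (Banach perturbation lemma, the mean-value estimate $\|y_{k+1}-y^*\|\le\tfrac{\beta L}{2}\|y_k-y^*\|^2$, and the contraction radius $\min\{r,1/(\beta L)\}$).
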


\begin{proof}
By the smooth dependence of solutions of \eqref{eq:shooting-ivp} on initial
data and parameters (Assumption~\ref{ass:data} and the $C^{1}$ selector), the
map $(\lambda_0,\mu)\mapsto(x(\cdot),\lambda(\cdot),z(\cdot))$ is $C^{1}$, hence
so is $\mathcal R$, with locally Lipschitz derivative by hypothesis. The claim
is then the standard local quadratic convergence of Newton's method for a
$C^{1}$ map with locally Lipschitz Jacobian that is nonsingular at the root
\cite{ortega2000iterative,deuflhard2011newton,wang2026elliptic,nocedal2006numerical,yu2026optimization2,yu2026optimization3}. If the data are merely $C^{1}$
(Jacobian continuous but not Lipschitz), the same argument gives local
superlinear convergence.
\end{proof}

The Jacobian \eqref{eq:shooting-jacobian} is obtained by integrating the
variational system of \eqref{eq:shooting-ivp} with respect to $(\lambda_0,\mu)$
alongside the state, or, for the scalar experiments below, by a forward
difference. We summarize the procedure.

\begin{algorithm}[h]
\caption{Isoperimetric shooting}
\label{alg:shooting_old}
\begin{algorithmic}[1]
    \Require level $B$; initial guess $(\lambda_0,\mu)$; tolerance $\texttt{tol}$.
    \Ensure $(\lambda_0^{*},\mu^{*})$; the multiplier $\mu^{*}$ and the value $V(B)=J(u^{*})$, with $-\mu^{*}\in\partial^{+}V(B)$.
    
    \Repeat
        \State Integrate \eqref{eq:shooting-ivp} on $[0,T]$ with the selector \eqref{eq:selector} to obtain $\bigl(x(\cdot),\lambda(\cdot),z(\cdot)\bigr)$.
        \State Evaluate the residual $\mathcal R(\lambda_0,\mu;B)$ of \eqref{eq:residual}.
        \If{$\|\mathcal R\| \ge \texttt{tol}$}
            \State Form $D_{(\lambda_0,\mu)}\mathcal R$ by the variational system (or a forward difference), and take the Newton step $(\lambda_0,\mu)\leftarrow(\lambda_0,\mu)-[D_{(\lambda_0,\mu)}\mathcal R]^{-1} \mathcal R$.
        \EndIf
    \Until{$\|\mathcal R\| < \texttt{tol}$}
\end{algorithmic}
\end{algorithm}

\subsection{Experiment 1: a transparent scalar benchmark}

Consider the scalar problem with linear dynamics and concave payoff
\[
  \dot x=u,\quad x(0)=x_0,\qquad
  \max\ J(u)=\int_0^T\!\Bigl(a\,x(t)-\tfrac12 u(t)^2\Bigr)dt,\qquad
  \int_0^T u(t)\,dt=B,
\]
which lies in the structured class of Assumption~\ref{ass:structured}
(linear dynamics, concave running payoff, affine constraint integrand $h=u$).
The augmented Hamiltonian is
$\Ht=a x-\tfrac12u^2+(\lambda+\mu)u$, so
$u^{*}=\lambda+\mu$, $\dot\lambda=-a$, and $\lambda(T)=0$; solving the residual
\eqref{eq:residual} gives $\lambda(t)=a(T-t)$ and
\[
  \mu(B)=\frac{B}{T}-\frac{aT}{2},
  \qquad
  V(B)=a x_0 T+\frac{a^2T^3}{6}-\frac{T}{2}\,\mu(B)^2 .
\]
Because the reduced problem is in the structured class, the residual
\eqref{eq:residual} is \emph{affine} in $(\lambda_0,\mu)$, so
Algorithm~\ref{alg:shooting_old} (Newton) converges in a single step---a machine-precision
consistency check of the envelope law. With $a=T=1$, $x_0=0$ and $B=1$ we obtain
$\mu=\tfrac12$, hence $-\mu=-\tfrac12$; Table~\ref{tab:exp1} confirms
$[V(B+\Delta B)-V(B)]/\Delta B\to-\mu$ and that the central difference equals
$-\mu$ exactly (as it must, $V$ being quadratic).

\begin{table}[t]
\centering\small
\caption{Experiment 1: forward and central difference quotients of $V$ at
$B=1$ against $-\mu=-\tfrac12$.}
\label{tab:exp1}
\begin{tabular}{cccc}
\toprule
$\Delta B$ & $\dfrac{V(B+\Delta B)-V(B)}{\Delta B}$
           & $\dfrac{V(B+\Delta B)-V(B-\Delta B)}{2\Delta B}$ & $-\mu$\\
\midrule
$10^{-1}$ & $-0.5500000000$ & $-0.5000000000$ & $-0.5$\\
$10^{-2}$ & $-0.5050000000$ & $-0.5000000000$ & $-0.5$\\
$10^{-3}$ & $-0.5005000000$ & $-0.5000000000$ & $-0.5$\\
$10^{-4}$ & $-0.5000500000$ & $-0.5000000000$ & $-0.5$\\
\bottomrule
\end{tabular}
\end{table}

\subsection{Experiment 2: a regular constrained LQ problem}

Take the scalar minimization problem
\begin{equation}\label{eq:exp2}
  \dot x=u,\quad x(0)=1,\ T=1,\qquad
  \min\ \tfrac12 x(T)^2+\tfrac12\int_0^1 u(t)^2\,dt,\qquad
  \int_0^1 u(t)^2\,dt=\beta ,
\end{equation}
so that $R=D=1$, $Q=0$, $M=1$. 
On the regular branch \(0<\beta<1\), solving the extremal system gives the constant
control $u^{*}=-1/\bigl(2(1+\mu)\bigr)$ and the closed forms
\[
  \mu^{*}(\beta)=\frac{1}{2\sqrt{\beta}}-1,
  \qquad
  V_{\rm LQ}(\beta)=\tfrac12\bigl(1-\sqrt{\beta}\bigr)^2+\tfrac12\beta,
  \qquad
  R+2\mu^{*}D=\frac{1}{\sqrt{\beta}}-1 .
\]
For \(\beta\ge1\), additional zero-mean controls can spend excess energy
without changing \(x(T)\), so the regular one-dimensional branch above no
longer describes the full constrained minimizer. We therefore use
\(0<\beta<1\) in the numerical comparison.

The modified control weight is positive for $\beta\in(0,1)$, so this is the
regular Riccati regime of Section~\ref{sec:lq}; it degenerates to
$R+2\mu^{*}D=0$ exactly as $\beta\uparrow1$, continuously connecting to
Experiment~3. Table~\ref{tab:exp2} and Figure~\ref{fig:exp2} show that
$-\mu^{*}(\beta)$ and the central-difference slope of $V_{\rm LQ}$ agree to
within $1.1\times10^{-6}$ across the grid, confirming $V_{\rm LQ}'(\beta)
=-\mu^{*}(\beta)$ (Theorem~\ref{thm:lq-hidden}(iii)).

\begin{table}[t]
\centering\small
\caption{Experiment 2: value, multiplier slope, central difference, and modified
control weight for problem \eqref{eq:exp2} ($\Delta=10^{-4}$).}
\label{tab:exp2}
\begin{tabular}{ccccc}
\toprule
$\beta$ & $V_{\rm LQ}(\beta)$ & $-\mu^{*}(\beta)$
        & $\dfrac{V_{\rm LQ}(\beta+\Delta)-V_{\rm LQ}(\beta-\Delta)}{2\Delta}$
        & $R+2\mu^{*}D$\\
\midrule
$0.10$ & $0.28377223$ & $-0.58113883$ & $-0.58113903$ & $2.162278$\\
$0.20$ & $0.25278640$ & $-0.11803399$ & $-0.11803402$ & $1.236068$\\
$0.30$ & $0.25227744$ & $\phantom{-}0.08712907$ & $\phantom{-}0.08712906$ & $0.825742$\\
$0.40$ & $0.26754447$ & $\phantom{-}0.20943058$ & $\phantom{-}0.20943058$ & $0.581139$\\
$0.50$ & $0.29289322$ & $\phantom{-}0.29289322$ & $\phantom{-}0.29289322$ & $0.414214$\\
$0.60$ & $0.32540333$ & $\phantom{-}0.35450278$ & $\phantom{-}0.35450277$ & $0.290994$\\
$0.70$ & $0.36333997$ & $\phantom{-}0.40238570$ & $\phantom{-}0.40238569$ & $0.195229$\\
$0.80$ & $0.40557281$ & $\phantom{-}0.44098301$ & $\phantom{-}0.44098300$ & $0.118034$\\
$0.90$ & $0.45131670$ & $\phantom{-}0.47295372$ & $\phantom{-}0.47295372$ & $0.054093$\\
\bottomrule
\end{tabular}
\end{table}

\begin{figure}[htbp]
\centering
\includegraphics[width=\textwidth]{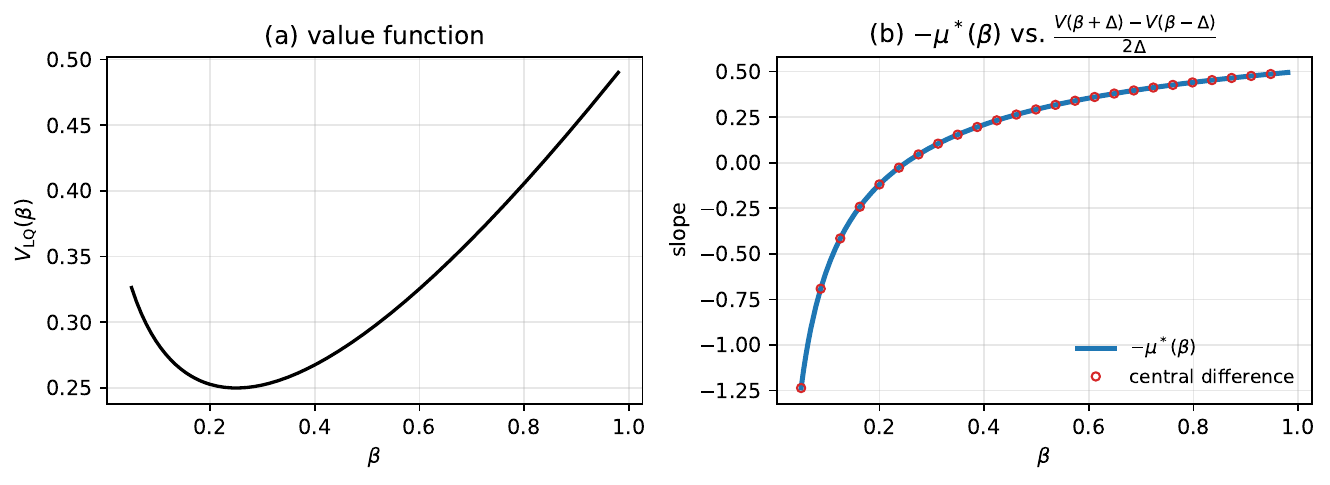}
\caption{Experiment 2. (a) The constrained value function
$V_{\rm LQ}(\beta)$ for problem \eqref{eq:exp2}. (b) The multiplier curve
$-\mu^{*}(\beta)$ (solid) against the central-difference slope of
$V_{\rm LQ}$ (markers); the curves coincide to $\sim10^{-6}$, confirming the
envelope identity $V_{\rm LQ}'(\beta)=-\mu^{*}(\beta)$.}
\label{fig:exp2}
\end{figure}
\clearpage

Although the inner control is linear in the adjoint, the scalar multiplier
equation $\psi(\mu):=\Cc(u_\mu)=\beta$ of Lemma~\ref{lem:secular} is
\emph{nonlinear} in $\mu$, so it exercises the Newton iteration of
Proposition~\ref{prop:shooting}. Table~\ref{tab:newton} reports the iterates for
$\beta=0.25$ (exact root $\mu^{*}=0$): the error is squared at each step,
exhibiting the predicted local quadratic convergence.

\begin{table}[t]
\centering\small
\caption{Experiment 2: Newton iterates for the multiplier equation
$\psi(\mu)=\beta$ at $\beta=0.25$, with exact root $\mu^{*}=0$.}
\label{tab:newton}
\begin{tabular}{cccc}
\toprule
$k$ & $\mu_k$ & $|\psi(\mu_k)-\beta|$ & $|\mu_k-\mu^{*}|$\\
\midrule
$0$ & $\phantom{-}0.200000000000$ & $7.64\times10^{-2}$ & $2.00\times10^{-1}$\\
$1$ & $-0.064000000000$ & $3.54\times10^{-2}$ & $6.40\times10^{-2}$\\
$2$ & $-0.006012928000$ & $3.03\times10^{-3}$ & $6.01\times10^{-3}$\\
$3$ & $-0.000054124255$ & $2.71\times10^{-5}$ & $5.41\times10^{-5}$\\
$4$ & $-0.000000004394$ & $2.20\times10^{-9}$ & $4.39\times10^{-9}$\\
$5$ & $\phantom{-}0.000000000000$ & $0$ & $3.8\times10^{-17}$\\
\bottomrule
\end{tabular}
\end{table}

\subsection{Experiment 3: the degenerate scalar LQ case}

Finally, take $R=D=1$, $Q=M=0$, $A=0$, $B=1$ in \eqref{eq:lq-iso-problem}:
$\min\tfrac12\int_0^T u^2\,dt$ subject to $\int_0^T u^2\,dt=\beta$. Every
feasible control has the same cost, so
  $V_{\rm LQ}(\beta)=\tfrac12\beta$,
  $-\mu^{*}=\tfrac12$,
  $R+2\mu^{*}D=0$.
The envelope law $V_{\rm LQ}'(\beta)=-\mu^{*}=\tfrac12$ holds for every
$\beta>0$ (Table~\ref{tab:exp3}), yet the modified control weight vanishes: this
is the trust-region hard case of Lemma~\ref{lem:secular}, in which the Riccati
feedback \eqref{eq:lq-control-mu}--\eqref{eq:lq-modified-riccati} is undefined
even though sensitivity is intact. Numerically, Algorithm~\ref{alg:shooting_old} detects the
degeneracy through the singularity of the modified weight $R+2\mu D$ at the
optimal multiplier; the multiplier itself, and hence $V_{\rm LQ}'(\beta)$,
remain well defined. This experiment is the operational reason the Riccati
representation of Section~\ref{sec:lq} must carry the regularity condition
$R+2\mu^{*}D\succ0$ as a genuine hypothesis rather than a convenience.

\begin{table}[t]
\centering\small
\caption{Experiment 3: the degenerate case. Sensitivity survives while the
modified control weight is singular.}
\label{tab:exp3}
\begin{tabular}{cccc}
\toprule
$\beta$ & $V_{\rm LQ}(\beta)$ & $-\mu^{*}$ & $R+2\mu^{*}D$\\
\midrule
$0.25$ & $0.125$ & $0.5$ & $0$\\
$0.50$ & $0.250$ & $0.5$ & $0$\\
$1.00$ & $0.500$ & $0.5$ & $0$\\
$2.00$ & $1.000$ & $0.5$ & $0$\\
\bottomrule
\end{tabular}
\end{table}

\subsection{Conclusions}

We have developed a function-space duality framework for sensitivity analysis
in finite-horizon optimal control problems with scalar isoperimetric
constraints. The main conclusion is that the isoperimetric Pontryagin
multiplier \(\mu\) has a shadow-price interpretation only after the
constraint-level value function has been placed in an appropriate convex
duality setting. More precisely, under the Lagrangian convention
    $J(u)+\mu\bigl(\Cc(u)-B\bigr)$,
the marginal value of the isoperimetric level is \(-\mu\), not \(\mu\). Thus,
whenever the constrained value function is differentiable at \(B\),
    $V'(B)=-\mu$.
At nondifferentiability points the corresponding statement is the
superdifferential inclusion
    $-\mu\in\partial^+V(B)$.

The analysis shows that this sensitivity formula is fundamentally a
duality result in function spaces, not a consequence of the maximum principle
alone. In the structured concave-affine regime, the admissible controls form a
weak-\(*\) compact subset of \(L^\infty(0,T;\mathbb R^m)\), the linear state
equation induces an affine control-to-state map into
\(W^{1,\infty}(0,T;\mathbb R^n)\), and the affine isoperimetric functional
maps the admissible set onto an interval. These properties imply concavity of
the constrained value function and, under a two-sided attainability condition,
yield the Fenchel--Moreau representation
\[
    V(B)=\inf_{\mu\in\mathbb R}
    \bigl\{\ell(\mu)-\mu B\bigr\},
    \qquad
    \ell(\mu)=\sup_{u\in\U}\{J(u)+\mu\Cc(u)\}.
\]
Consequently, the identity
    $\partial^+V(B)=-\mathcal D(B)$
is a direct conjugate-duality statement, where \(\mathcal D(B)\) denotes the
set of dual-optimal multipliers. The remaining step is not convex analytic but
variational: a constraint qualification is required to identify
\(\mathcal D(B)\) with the set \(\mathcal M(B)\) of normal Pontryagin
isoperimetric multipliers. This separates the free duality part of the
argument from the genuine normality and multiplier-representation part.

For the linear-quadratic problem with a single quadratic equality constraint,
the state equation reduces the problem to quadratic forms on the Hilbert space
\(L^2(0,T;\mathbb R^m)\):
\[
    J_{\rm LQ}(u)
    =
    \frac12\langle u,\mathcal R u\rangle
    +\langle \rho_0,u\rangle+\gamma_0,
    \qquad
    \Cc(u)=\langle u,\mathcal D u\rangle .
\]
This exposes a second hidden-convexity mechanism. Although the equality
constraint \(\Cc(u)=\beta\) is nonconvex, strong duality can still be obtained
through the convexity of the joint range of the relevant quadratic
functionals. The multiplier sensitivity formula remains valid at
differentiability points of \(V_{\rm LQ}\), while Riccati synthesis requires
the additional pointwise regularity condition
    $R(t)+2\mu^*D(t)\succ0$ for a.e. $t\in[0,T]$.
The degenerate scalar example shows that this condition is genuinely stronger
than sensitivity: at the boundary case \(R+2\mu^*D=0\), the envelope identity
\(V_{\rm LQ}'(\beta)=-\mu^*\) remains valid, but the modified Riccati feedback
formula is no longer defined.

The numerical shooting procedure provides a practical realization of the
theory. It computes the constant isoperimetric multiplier as part of a
two-point boundary-value problem and verifies the envelope relation by
comparing \(-\mu\) with finite-difference slopes of the constrained value
function. The structured scalar benchmark, the regular LQ example, and the
degenerate LQ example collectively confirm the distinction between
value-function sensitivity and feedback synthesis.

Several extensions remain natural. First, vector-valued isoperimetric
constraints \(\Cc(u)=B\in\mathbb R^k\) would replace the scalar multiplier by a
vector multiplier and the one-dimensional superdifferential by a convex subset
of \(\mathbb R^k\). The Fenchel-duality part extends directly, but the hidden
convexity of quadratic ranges becomes substantially more restrictive when more
than two quadratic functionals are involved. Second, inequality constraints
\(\Cc(u)\le B\) introduce the sign condition \(\mu\ge0\) and complementary
slackness, with the relevant constraint qualification becoming an inequality
version of the Zowe--Kurcyusz or Robinson regularity condition. Third, for
genuinely nonlinear dynamics outside the structured concave-affine class, one
should expect weak compactness and strong duality only after relaxation or
convexification; the sensitivity theorem then applies first to the relaxed
value function, while the gap between the relaxed and original problems becomes
a separate analytical question.

The central message is that the shadow-price interpretation of an
isoperimetric Pontryagin multiplier is exactly as strong as the function-space
duality structure supporting it. Weak compactness, affine or hidden convexity,
Fenchel--Moreau conjugacy, and normal multiplier representation are the
mechanisms that turn a first-order multiplier into a genuine derivative or
supergradient of the constrained value function.

\appendix
\section*{Appendix}
\section{Closed-form verification of the envelope identity}
\label{subsec:closed-form-experiments}
We verify the identity
    $V'(B)=-\mu$
in two scalar benchmark problems. The first is a maximization problem in the
structured affine-constraint class of Section~\ref{sec:sensitivity}. The second
is a minimization-form linear-quadratic problem with a quadratic equality
constraint. In both cases the same Lagrangian convention is used:
    $L(u,\mu;B)=J(u)+\mu(\mathcal C(u)-B)$,
so the right-hand side \(B\) enters the Lagrangian through the term
\(-\mu B\). Therefore, whenever the value function is differentiable,
\[
    \frac{dV}{dB}
    =
    \frac{\partial L}{\partial B}(u^*,\mu^*;B)
    =
    -\mu^*.
\]
\subsection{Experiment 1: affine constraint and concave payoff}
Consider the scalar maximization problem
\begin{equation}\label{eq:exp1-main}
\begin{aligned}
    &\text{maximize} &&
    J(u)
    :=
    \int_0^T
    \left[
        a\,x(t)-\frac12 u(t)^2
    \right]dt
    \\
    &\text{subject to} &&
    \dot x(t)=u(t),
    \qquad
    x(0)=x_0,
    \\
    &&&
    \mathcal C(u):=\int_0^T u(t)\,dt=B .
\end{aligned}
\end{equation}
The augmented Hamiltonian is
    $\widetilde H(t,x,u,\lambda,\mu)
    =
    a x-\frac12 u^2+\lambda u+\mu u$.
The stationarity condition is
    $\partial_u\widetilde H
    =
    -u+\lambda+\mu
    =
    0$,
hence
    $u^*(t)=\lambda(t)+\mu$.
The adjoint equation and terminal condition are
\begin{equation}\label{eq:exp1-adjoint}
    \dot\lambda(t)
    =
    -\partial_x\widetilde H(t,x,u,\lambda,\mu)
    =
    -a,
    \qquad
    \lambda(T)=0 .
\end{equation}
Solving \eqref{eq:exp1-adjoint} gives
    $\lambda(t)
    =
    a(T-t)$.
Therefore
    $u^*(t)
    =
    a(T-t)+\mu$.
The constraint determines \(\mu\). Indeed,
\[
    B=
    \int_0^T u^*(t)\,dt=
    \frac{aT^2}{2}+\mu T .
\]
Thus
\begin{equation}\label{eq:exp1-mu}
    \mu(B)
    =
    \frac{B}{T}-\frac{aT}{2}.
\end{equation}
Next compute the state:
\[
    x^*(t)=
    x_0+\int_0^t u^*(s)\,ds=
    x_0+\int_0^t
    \left[
        a(T-s)+\mu
    \right]ds=
    x_0+aTt-\frac{a}{2}t^2+\mu t .
\]
Now evaluate the value function. Since
\[
    u^*(t)=a(T-t)+\mu,
    \qquad
    x^*(t)=x_0+aTt-\frac{a}{2}t^2+\mu t,
\]
we have
\[
    V(B) =
    J(u^*)=
    \int_0^T
    \left[
        a x^*(t)-\frac12 u^*(t)^2
    \right]dt=
    \int_0^T
    \left[
        a\left(
            x_0+aTt-\frac{a}{2}t^2+\mu t
        \right)
        -
        \frac12\left(
            a(T-t)+\mu
        \right)^2
    \right]dt .
\]
Expanding the integrand,
\[
    a x^*(t)-\frac12u^*(t)^2=
    a x_0
    -
    \frac{a^2T^2}{2}
    -
    a\mu T
    -
    \frac12\mu^2
    +
    2a^2Tt
    +
    2a\mu t
    -
    a^2t^2 .
\]
Integrating term by term gives
\[
    V(B)
    =
    \int_0^T
    \left[
        a x_0
        -
        \frac{a^2T^2}{2}
        -
        a\mu T
        -
        \frac12\mu^2
        +
        2a^2Tt
        +
        2a\mu t
        -
        a^2t^2
    \right]dt=
    a x_0T+\frac{a^2T^3}{6}-\frac{T}{2}\mu^2 .
\]
Substituting \(\mu=\mu(B)\) from \eqref{eq:exp1-mu}, we obtain the closed-form
value function
\begin{equation}\label{eq:exp1-value-final}
    V(B)
    =
    a x_0T+\frac{a^2T^3}{6}
    -
    \frac{T}{2}
    \left(
        \frac{B}{T}-\frac{aT}{2}
    \right)^2 .
\end{equation}
Differentiate \eqref{eq:exp1-value-final} with respect to \(B\). Since
    $\mu(B)=\frac{B}{T}-\frac{aT}{2},
    \qquad
    \mu'(B)=\frac1T$,
we have
\[
    V'(B)
    =
    \frac{d}{dB}
    \left[
        a x_0T+\frac{a^2T^3}{6}
        -
        \frac{T}{2}\mu(B)^2
    \right]=
    -T\mu(B)\mu'(B)=
    -\mu(B).
\]
As a concrete numerical specialization, if \(a=T=1\) and \(x_0=0\), then
\[
    \mu(B)=B-\frac12,
    \qquad
    V(B)=\frac16-\frac12\left(B-\frac12\right)^2,
    \qquad
    V'(B)=-\left(B-\frac12\right).
\]
At \(B=1\),
    $\mu(1)=\frac12$,
    $V'(1)=-\frac12=-\mu(1)$.

\subsection{Experiment 2: regular scalar LQ problem}
Consider the scalar minimization problem
\[
\begin{aligned}
    &\text{minimize} &&
    J_{\rm LQ}(u)
    :=
    \frac12 x(1)^2
    +
    \frac12\int_0^1 u(t)^2\,dt
    \\
    &\text{subject to} &&
    \dot x(t)=u(t),
    \qquad
    x(0)=1,
    \\
    &&&
    \mathcal C(u):=\int_0^1 u(t)^2\,dt=\beta .
\end{aligned}
\]
We focus on the regular branch \(0<\beta<1\). Since
    $x(1)=1+\int_0^1 u(t)\,dt$,
the Cauchy--Schwarz inequality gives
\[
    \left|
        \int_0^1 u(t)\,dt
    \right|^2
    \le
    \left(\int_0^1 1^2\,dt\right)
    \left(\int_0^1 u(t)^2\,dt\right)
    =
    \beta .
\]
Therefore
    $-\sqrt{\beta}
    \le
    \int_0^1 u(t)\,dt
    \le
    \sqrt{\beta}$.
For \(0<\beta<1\), the terminal penalty
\[
    \frac12 x(1)^2
    =
    \frac12
    \left(
        1+\int_0^1 u(t)\,dt
    \right)^2
\]
is minimized by choosing the smallest possible value of \(\int_0^1u(t)\,dt\),
namely
    $\int_0^1 u(t)\,dt=-\sqrt{\beta}$.
Equality in Cauchy--Schwarz occurs if and only if \(u\) is constant a.e.; hence
the optimizer on the regular branch is
    $u^*(t)=-\sqrt{\beta}$,
    $0\le t\le1$.
Then
    $x^*(1)
    =
    1+\int_0^1 u^*(t)\,dt
    =
    1-\sqrt{\beta}$.
The value is therefore
\[
    V_{\rm LQ}(\beta)
    =
    \frac12 x^*(1)^2
    +
    \frac12\int_0^1 u^*(t)^2\,dt
    =
    \frac12(1-\sqrt{\beta})^2
    +
    \frac12\beta .
\]
Expanding gives the useful form
    $V_{\rm LQ}(\beta)
    =
    \frac12-\sqrt{\beta}+\beta$.
Hence
    $V_{\rm LQ}'(\beta)
    =
    -\frac{1}{2\sqrt{\beta}}+1
    =
    1-\frac{1}{2\sqrt{\beta}}$.
We now compute the Lagrange multiplier independently from stationarity. The
minimization-form augmented Hamiltonian is
\[
    \widetilde H(t,x,u,\lambda,\mu)
    =
    \frac12 u^2+\lambda u+\mu u^2 .
\]
The stationarity condition is
\begin{equation}\label{eq:exp2-stationarity}
    \partial_u\widetilde H
    =
    (1+2\mu)u+\lambda
    =
    0.
\end{equation}
The adjoint equation and terminal condition are
    $\dot\lambda(t)
    =
    -\partial_x\widetilde H=0$,
    $\lambda(1)=x(1)$.
Thus \(\lambda(t)\equiv\lambda=x(1)\). Using
\eqref{eq:exp2-stationarity},
\begin{equation}\label{eq:exp2-u-lambda}
    u(t)
    =
    -\frac{\lambda}{1+2\mu}.
\end{equation}
Since \(u\) is constant,
    $x(1)=1+u$.
But \(\lambda=x(1)\), so
    $\lambda=1+u$,
    $u=-\frac{\lambda}{1+2\mu}$.
The constraint gives
    $\beta=\int_0^1u(t)^2\,dt=u^2$.
On the regular branch \(0<\beta<1\), the optimal control is negative, so
    $u=-\sqrt{\beta}$.
Therefore
    $\lambda=1+u=1-\sqrt{\beta}$.
Substituting \(u=-\sqrt{\beta}\) and \(\lambda=1-\sqrt{\beta}\) into the
stationarity condition
    $(1+2\mu)u+\lambda=0$
gives $(1+2\mu)(-\sqrt{\beta})+(1-\sqrt{\beta})=0$.
Thus
\begin{equation}\label{eq:exp2-mu}
    \mu^*(\beta)
    =
    \frac{1}{2\sqrt{\beta}}-1 .
\end{equation}
Consequently,
    $-\mu^*(\beta)
    =
    1-\frac{1}{2\sqrt{\beta}}
    =
    V_{\rm LQ}'(\beta)$.
The regularity matrix in this scalar problem is
\[
    R+2\mu^*D
    =
    1+2\left(
        \frac{1}{2\sqrt{\beta}}-1
    \right)
    =
    \frac{1}{\sqrt{\beta}}-1 .
\]
Therefore,
\[
    R+2\mu^*D>0
    \quad\Longleftrightarrow\quad
    \frac{1}{\sqrt{\beta}}-1>0
    \quad\Longleftrightarrow\quad
    0<\beta<1.
\]
Thus \(0<\beta<1\) is exactly the regular branch on which the Riccati or
stationarity formula using \((R+2\mu^*D)^{-1}\) is valid.
For completeness, the scalar multiplier equation can also be written as a
secular equation. From \eqref{eq:exp2-u-lambda} and the state-adjoint relation
one obtains
    $u_\mu
    =
    -\frac{1}{2(1+\mu)}$.
Hence
    $\psi(\mu)
    :=
    \int_0^1 u_\mu^2\,dt
    =
    \frac{1}{4(1+\mu)^2}$.
The constraint \(\psi(\mu)=\beta\) gives
    $\frac{1}{4(1+\mu)^2}
    =
    \beta$.
Solving for the regular branch \(1+\mu>0\),
\[
    2(1+\mu)\sqrt{\beta}=1,
    \qquad
    \mu=\frac{1}{2\sqrt{\beta}}-1,
\]
which agrees with \eqref{eq:exp2-mu}. Differentiating the secular residual
\[
    F(\mu):=\psi(\mu)-\beta
    =
    \frac{1}{4(1+\mu)^2}-\beta
\]
gives
    $F'(\mu)
    =
    -\frac{1}{2(1+\mu)^3}$.
Newton's method for the multiplier is therefore
\[
    \mu_{k+1}
    =
    \mu_k
    -
    \frac{F(\mu_k)}{F'(\mu_k)}
    =
    \mu_k
    +
    2(1+\mu_k)^3
    \left[
        \frac{1}{4(1+\mu_k)^2}
        -
        \beta
    \right].
\]
Finally, for \(\beta=1/4\),
\[
    \mu^*\!\left(\frac14\right)
    =
    \frac{1}{2(1/2)}-1
    =
    0,
    \qquad
    V_{\rm LQ}'\!\left(\frac14\right)
    =
    1-\frac{1}{2(1/2)}
    =
    0
    =
    -\mu^*\!\left(\frac14\right).
\]

\bibliography{2reference}

\end{document}